%
% IEEE Transactions on Microwave Theory and Techniques example
% Tibault Reveyrand - http://www.microwave.fr
%
% http://www.microwave.fr/LaTeX.html
% ---------------------------------------

% ================================================
% Please HIGHLIGHT the new inputs such like this :
% Text :
%  \hl{comment}
% Aligned Eq. 
% \begin{shaded}
	% \end{shaded}
% ================================================
\documentclass[a4paper,fleqn]{elsarticle}

\usepackage[numbers]{natbib}

%\usepackage[retainorgcmds]{IEEEtrantools}
%\usepackage{bibentry}  
% \usepackage{xcolor,soul,framed} %,caption

% \colorlet{shadecolor}{yellow}
% \usepackage{color,soul}
% \usepackage[pdftex]{graphicx}
% \graphicspath{{../pdf/}{../jpeg/}}
% \DeclareGraphicsExtensions{.pdf,.jpeg,.png}

\usepackage{amsthm}
\usepackage{amsmath}
\usepackage{amssymb}
\usepackage{verbatim}
\usepackage{xcolor}
%\definecolor{myblue}{RGB}{0,0,255}
% \usepackage[cmex10]{amsmath}
%Mathabx do not work on ScribTex => Removed
%\usepackage{mathabx}
% \usepackage{array}
% \usepackage{mdwmath}
% \usepackage{mdwtab}
% \usepackage{eqparbox}
% \usepackage{url}
% \usepackage{amsfonts}
\usepackage{cleveref}
\usepackage{subfig}
\usepackage{float}
\usepackage{color}
% acronyms
\usepackage[printonlyused]{acronym}

%\setmathfont{XITS Math}
%\setmathfont[version=setB,StylisticSet=1]{XITS Math}

% Adjusting cleveref package
\crefalias{subequation}{equation} 
\crefalias{eqnarray}{equation} 
\crefformat{pluraleq}{(#2#1#3)}

\usepackage{bm}
\usepackage[ruled]{algorithm2e}

\newcommand{\be}{\boldsymbol{e}}

\newcommand{\bu}{\boldsymbol{u}}

\newcommand{\bx}{\boldsymbol{x}}

\newcommand{\bO}{\boldsymbol{O}}

\newtheorem{theorem}{Theorem}
\newtheorem{assumption}{Assumption}
\newtheorem{remark}{Remark}
\newtheorem{definition}{Definition}
\newtheorem{corollary}{Corollary}
\newtheorem{lemma}{Lemma}

% \end{frontmatter}	

%%%Author macros
\def\tsc#1{\csdef{#1}{\textsc{\lowercase{#1}}\xspace}}
\tsc{WGM}
\tsc{QE}
\tsc{EP}
\tsc{PMS}
\tsc{BEC}
\tsc{DE}
%%% 

\begin{document}
\let\WriteBookmarks\relax
\def\floatpagepagefraction{1}
\def\textpagefraction{.001}
\shorttitle{A number-theoretic sampling method in NN}
\shortauthors{Yu Yang et~al.}
%\begin{frontmatter}

\title [mode = title]{\textcolor{black}{A novel number-theoretic sampling method for neural network solutions of partial differential equations}}                      
\tnotemark[1]

\tnotetext[1]{This research is partially sponsored by the National key R \& D Program of China (No.2022YFE03040002) and the National Natural Science Foundation of China (No.11971020, No.12371434). }

% \tnotetext[2]{The second title footnote which is a longer text matter
%    to fill through the whole text width and overflow into
%    another line in the footnotes area of the first page.}

% %%%%%%%%%%%%%%%%%%%%%%%%%%%%%
        \author[1]{Yu Yang}
        [type=editor,
        auid=000,bioid=1,
        orcid=0009-0005-1428-6745
        ]
        %\fnmark[1,3]   
        %\fnmark[2]
        %\fnmark[1] %    ---->    \fntext[fn1]
        \ead{yangyu1@stu.scu.edu.cn}

        \address[1]{School of Mathematics, Sichuan University, 610065, Chengdu, China.}

% % %%%%%%%%%%%%%%%%%%%%%%%%%%%%%        
%         \author[2,3]{Helin Gong}
%         % [auid=000,bioid=1,
%                                 %prefix=Doctor,
%                                 %role=Researcher,
%                                 % orcid=0000-0002-4094-6795,
%                                 %]
                                
%         %\cormark[1] %    ---->    \cortext[cor1]
%         \cormark[1] %   ---->    \fntext[fn1]
        
%         \ead{gonghelin@sjtu.edu.cn}
%         %\ead[url]{www.cvr.cc, cvr@sayahna.org}
        
%         % \credit{Conceptualization of this study, Data curation, Methodology, Writing - Original draft preparation}
%         %\credit{Conceptualization of this study, Methodology, Software}
        
%         \address[2]{Paris Elite Institute of Technology, Shanghai Jiao Tong University, 200240, Shanghai, China.}
%         \address[3]{Nuclear Power Institute of China, 610041, Chengdu, China.}

% % %%%%%%%%%%%%%%%%%%%%%%%%%%%%%

% % % %%%%%%%%%%%%%%%%%%%%%%%%%%%%%     
        \author[2,3]{Pingan He}[style=chinese]
        %[auid=000,bioid=1,
                                %prefix=Doctor,
                                %role=Researcher,
                                % orcid=0000-0002-4094-6795,
                                %]
                                
        %\cormark[1] %    ---->    \cortext[cor1]
        %\fnmark[1] %   ---->    \fntext[fn1]
        
        \ead{t330202702@mail.uic.edu.cn}
        \address[2]{Faculty of Science and Technology, BNU-HKBU United International College, 519087, Zhuhai, China.}
        
% % % %%%%%%%%%%%%%%%%%%%%%%%%%%%%%     
        \author[3]{Xiaoling Peng}[style=chinese]
        %[auid=000,bioid=1,
                                %prefix=Doctor,
                                %role=Researcher,
                                % orcid=0000-0002-4094-6795,
                                %]
                                
        %\cormark[1] %    ---->    \cortext[cor1]
        %\fnmark[1] %   ---->    \fntext[fn1]
         \cormark[1]
        \ead{xlpeng@uic.edu.cn}
        \address[3]{Guangdong Provincial Key Laboratory of Interdisciplinary Research and Application for Data Science, BNU-HKBU United International College, 519087, Zhuhai, China}
        
        % \address[2]{Faculty of Science and Technology, BNU-HKBU United International College, 519087,Zhuhai, China.}
% % % %%%%%%%%%%%%%%%%%%%%%%%%%%%%%     
        \author[1]{Qiaolin He}[style=chinese]
        %[auid=000,bioid=1,
                                %prefix=Doctor,
                                %role=Researcher,
                                % orcid=0000-0002-4094-6795,
                                %]
                                
        %\cormark[1] %    ---->    \cortext[cor1]
        \cormark[1] %   ---->    \fntext[fn1]
        
        \ead{qlhejenny@scu.edu.cn}
        %\ead[url]{www.cvr.cc, cvr@sayahna.org}
        
        % \credit{Conceptualization of this study, Data curation, Methodology, Writing - Original draft preparation}
        %\credit{Conceptualization of this study, Methodology, Software}
        
        % \address[1]{School of Mathematics, Sichuan University, 610065, Chengdu, China.}

        \cortext[cor1]{Corresponding author}

\begin{abstract}
\\
%When faced with the challenge of handling a multitude of points, the conventional uniform random sampling technique, widely utilized for approximating integrals through the Monte Carlo method, tends to falter in its efficiency when applied to problems characterized by low regularity or high dimensionality.
\textcolor{black}{Traditional Monte Carlo integration using uniform random sampling exhibits degraded efficiency in low-regularity or high-dimensional problems. We propose a novel deep learning framework based on deterministic number-theoretic sampling points, which is a robust approach specifically designed to handle partial differential equations with rough solutions or in high dimensions.} \textcolor{black}{The sample points are generated by the generating vector to  achieve the smallest discrepancy.}
%which represents a deterministic and robust approach to sampling. %Our framework has been specifically engineered to handle the challenges posed by low-regularity or high-dimensional partial differential equations, with the innovative incorporation of Physics-Informed neural networks at its core. 
%Furthermore, we've supplied rigorous and thorough mathematical proofs to clearly show that our method has a lower error bound than the conventional uniform random sampling method.
\textcolor{black}{
The architecture integrates Physics-Informed Neural Networks (PINNs) with rigorous mathematical guarantees demonstrating lower error bounds compared to conventional  uniform random sampling. Numerical validation includes low-regularity Poisson equations, two-dimensional inverse Helmholtz problems, and high-dimensional linear/nonlinear PDEs, systematically demonstrating the algorithm's superior performance and generalization capabilities.}
\end{abstract}

%We employ numerical experiments involving the Poisson equation with low regularity, the two-dimensional inverse Helmholtz equation, and high-dimensional linear and nonlinear problems to illustrate the effectiveness of our algorithm. %Through these experiments, we've been able to vividly illustrate the superior performance and broad applicability of our algorithm.}

%当需要的点数量很多时，使用蒙特卡洛方法近似积分，传统的均匀采样效率低下。在这项工作中，我们利用了数论方法中的好格子点集合进行采样，并构建了一个基于好格子点集合和物理信息神经网络的深度学习框架，用来求解低正则性问题和高维问题。同时，针对我们的算法，严谨的数学证明被给出，论证了我们算法的有效性。最后，在实验部分，我们用低正则性的泊松方程，二维Burgers方程和高维的线性和非线性问题来从数值实验的角度说明我们算法是有效的。

% \noindent\texttt{\textbackslash begin{abstract}} \dots 
% \texttt{\textbackslash end{abstract}} and
% \verb+\begin{keyword}+ \verb+...+ \verb+\end{keyword}+ 
% which
% contain the abstract and keywords respectively. 
% Each keyword shall be separated by a \verb+\sep+ command.

% \begin{graphicalabstract}
% \includegraphics{figs/cas-grabs.pdf}
% \end{graphicalabstract}

% \begin{highlights}
% \item Research highlights item 1
% \item Research highlights item 2
% \item Research highlights item 3
% \end{highlights}

\begin{keywords}
deep learning \sep neural network \sep partial differential equation \sep sampling \sep good lattice points
\end{keywords}

\maketitle

	% @@@@@@@@@@@@@@@@@@@@@@@@@@
	% @@@@@@@@@@@@@@@@@@@@@@@@@@

 	\section{Introduction}
	\label{sec:intro}

   Numerically solving partial differential equations (PDEs) \cite{renardy2006introduction} has always been one of the concerns of frontier research. In recent years, as the computational efficiency of hardware continues to improve, the approach of utilizing deep learning to solve PDEs has increasingly gained attention. 
   \textcolor{black}{Starting from the strong form,
   the neural networks with enhanced physical knowledge and the deep Galerkin method were introduced in references \cite{PINN} and \cite{DGM}, respectively. Starting from the weak form, the Deep Ritz method, which is grounded in the variational formulation of PDEs, was developed in \cite{DeepRitz}. And Weak Adversarial Networks was proposed in \cite{WAN} to transform the weak solution problem of PDEs into a minimax problem, which is solved by alternatively updating the primal and adversarial networks.}  The deep backward stochastic differential equations (BSDEs) framework, adept at addressing high-dimensional PDEs, was established in \cite{DeepBSDE} based on stochastic differential equations(SDEs). From the perspective of operator learning, deep operator networks (DeepONet) were proposed in \cite{DeepONet}, and the Fourier Neural Operator (FNO) framework was introduced in \cite{FNO}.
   In this work, we are concerned with Physics-Informed Neural Networks (PINNs)\cite{PINN}.

   % We first briefly introduce PINNs, and a detailed introduction can be found in Section \ref{sec:Brief introduction to PINNs}.
   For the general PINN, its network structure is a common fully connected neural network, the input is the sampling points in the computational domain, and the output is the predicted solution of PDEs. Its loss function is usually made up of the norm $L_2$ of the residual function of PDEs, %based on the partial differential control equations, 
   the boundary loss %based on the boundary conditions, 
   and more relevant physical constraints. %Then, during the training process, the value of the loss function is reduced by an optimization algorithm while updating the parameters of the neural network. 
   The neural network is then trained by an optimization method to approximate the solution of PDEs. 
   %In the ideal situation, that is, after the optimization algorithm finds the global minimum of the loss function, then the training of the PINNs is stopped, at which point the predicted solution output by the neural network is considered to be an approximation of the true solution of the system of partial differential equations.
    At present, it is a widely accepted fact that the error of PINNs is composed of three components \cite{shin2020convergence} which are approximation error, estimation error and optimization error, %In short, the reason for the approximation error is that the space in which the true solution actually exists is different from the space represented by the neural network. This error is mainly determined by the structure and size of the neural network as well as the activation function. And 
    where the estimation error is due to the Monte Carlo (MC) method to discretize the loss function.
    %As for the optimization error, it arises from the inability of the optimization algorithm to find the global minimum point of the loss function.

  %   %目前，许多研究者都基于神经网络提出了广泛使用的深度学习求解器，比如有基于变分形式来专注于求解特征值问题DEEP RITZ，基于随机微分方程而且擅长求解高维问题的DEEP BSDE，基于神经算子来学习解函数的算子的DeepoNet。这篇文章中，我们更加关注物理信息神经网络。
  
   % 这里，我们先简单介绍一下PINN,详细的介绍可以参考第二节。对于一般的PINN，它的网络结构是常见的全连接神经网络，它的输入是计算区域内部的采样点，输出则是偏微分方程系统的预测解。它的损失函数的形式的通常是选择基于L2范数的损失函数，主要包含了基于偏微分控制方程的残差损失，基于边界条件的边界损失以及更多相关的物理约束。然后，在训练过程中，通过优化算法减少损失函数的值，同时更新神经网络的参数。在理想状况下，也就是在优化算法找到了损失函数的全局最小值之后，然后停止PINN的训练，此时，神经网络输出的预测解则被认为是偏微分方程系统真解的近似解。
   %简而言之，造成近似误差的原因是，真解实际存在的空间和神经网络表述的空间是不同的。这个误差主要由神经网络的结构、尺寸以及激活函数来决定的。而估计误差是因为蒙特卡洛方法去离散基于L2范数的损失函数所产生的。至于优化误差，它是由优化算法不能找到损失函数的全局最小值点所产生的。

    %The main focus of this paper is on the estimation error, and 
    \textcolor{black}{The main difficulty is that when using MC method to discretize the loss function in integral form, sampling points obeying a uniform distribution are usually sampled to serve as empirical loss, which may cause large error for the problem with low-regularity or high-dimensional problem.} Then it is worth considering how to get better sampling.  Currently, there have been many works on non-uniform adaptive sampling. In the work \cite{lu2021deepxde}, the residual-based adaptive refinement (RAR) method aimed at enhancing the training efficiency of PINNs is introduced. Specifically, this method adaptively adds training points in regions where the residual loss is large. In \cite{wu2023comprehensive}, a novel residual-based adaptive distribution (RAD) method is proposed. The main concept of this method involves constructing a probability density function that relies on residual information and subsequently employing adaptive sampling techniques in accordance with this density function. 
    \textcolor{black}{In \cite{FIPINN}, the failure probability was introduced and utilized as a posteriori error indicators to generate new training points. In \cite{FIPINN2}, further advanced this work by incorporating re-sampling techniques and proposing the Subset Simulation algorithm for error estimation. Based on the failure probability, sampling points can be efficiently generated in the failure region.  An adaptive sampling framework for solving PDEs is proposed in \cite{das-pinns},
    which formulating a variance of the residual loss and leveraging KRNet, as introduced in \cite{tang2020deep}.} Contrary to the methods mentioned above that rely on residual loss, an MMPDE-Net grounded in the moving mesh method in \cite{MSPINN} is proposed, which cleverly utilizes the information from the solution to create a monitor function, facilitating adaptive sampling of training points. 
    % The robustness and efficiency of the above non-uniform adaptive sampling methods are not good enough for some low-regularity or high-dimensional problems. 
    \textcolor{black}{However, non-uniform sampling methods still have drawbacks. First, adaptive methods generally require an initial solution from a neural network, which means pre-training is necessary and consumes resources. Second, the degree of non-uniformity in sampling points is often determined by hyperparameters in the adaptive method. It is challenging to find a common optimal set of hyperparameters that works across different problems.}

      Compared to non-uniform adaptive sampling, uniform non-adaptive sampling does not require a pre-training step to obtain information about the loss function or the solution, which directly improves efficiency. There are several common methods of uniform non-adaptive sampling, which include 1) equispaced uniform points sampling, a method that uniformly selects points from an array of evenly spaced points, frequently employed in addressing low-dimensional problems; 2) Latin hypercube sampling (LHS, \cite{stein1987large}), which is a stratified sampling method based on multivariate parameter distributions; 3) uniform random sampling, which draws points at random in accordance with a continuous uniform distribution within the computational domain, representing the most widely adopted method (\cite{jin2021nsfnets,krishnapriyan2021characterizing,PINN}). However, equispaced uniform points sampling is difficult to achieve in high dimensions, and the randomness inherent in LHS and uniform random sampling may result in an insufficient number of sampling points in regions of low regularity, thereby leading to large errors.
    
    Our motivation is to improve the prediction performance of PINNs by reducing the estimation error via an innovative sampling method. In this work, we develop a uniform non-adaptive sampling framework, according to number-theoretic methods (NTM), which is introduced by Korobov \cite{korobov1959approximate}. It is a method based on number theory to produce uniformly distributed points, which is used to reduce the need for sampling points when calculating numerical integrals. Later on, researchers found that the NTM and the MC method share some commonalities in terms of uniform sampling, so it is also known as the quasi-Monte Carlo (QMC) method. In NTM, there are several types of low-discrepancy point sets, such as Halton sequence \cite{halton1960efficiency}, Hammersley sequence \cite{hammersley1965monte}, Sobol sequence \cite{sobol1967distribution} and  good lattice point set \cite{korobov1959approximate}. Among them, we are mainly concerned with the good lattice point set (GLP set). The GLP set is introduced by Korobov for the numerical simulations of multiple integrals \cite{korobov1959approximate}, and it has been widely applied in NTM. Subsequently,  Fang et al. introduced the GLP set into uniform design (\cite{fang1993number,fang2018theory}), leading to a significant development. Compared to uniform random sampling, the GLP set offers the following advantages: (1) The GLP set is established on the NTM, and once the generating vectors are determined, its points are deterministic. (2) The GLP set has higher uniformity, which is attributed to its low-discrepancy property. This advantage becomes more pronounced in high-dimensional cases, as the GLP set can be uniformly cattered throughout the space, which is difficult to do with uniform random sampling. (3) The error bound of the GLP set is better than that of uniform random sampling in numerical integrations. It is well known (\cite{caflisch1998monte,dick2013high,niederreiter1992random}) that the MC method approximates the integral with an error of $\bO\left(N^{-\frac{1}{2}}\right)$ when discretizing the integral using $N $ i.i.d random points. With $N$ deterministic points, the GLP set approximates the integral with an error of $\bO\left(\frac { (\log N)^{d} } {N}\right)$, where $d$ is the dimension of the integral (see Lemma \ref{lem:MC_Discrepancy}). Thus,  for the same number of sampling points, the GLP set performs better than uniform random sampling. When using PINNs to solve the PDEs, a large number of sampling points may be required, such as low regularity problems or high dimensional problems. \textcolor{black}{Research on applying good lattice points to neural networks remains limited. In \cite{matsubara2023good}, they proposed a physics-informed loss function based on Fourier series for good lattice training. However, it did not delve into cases requiring many training points, such as low-regularity and high-dimensional problems, nor did it conduct a comprehensive theoretical analysis of the solution's error. These gaps are thoroughly addressed in our study.} %When using PINNs to solve some partial differential equations, a large number of sampling points is required, such as low regularity problems or high dimensional problems. 
    In such case, the uniform random sampling is very inefficient. In this work, 
    %for low regularity problems and high dimensional problems,
    GLP sampling is utilized instead of uniform random sampling, which not only reduces the consumption of computational resources, but also improves the accuracy. Meanwhile, we also give an upper bound about the error of the predicted solution of PINNs when using GLP sampling through rigorous mathematical proofs. According to the error upper bound, we prove that GLP sampling is more effective than uniform random sampling.

    In summary, our contributions in this work are as follows:
		\begin{itemize}
			\item We introduce a method to obtain good lattice point sets and, with the goal of reducing estimation errors, elaborate on how to incorporate these good lattice point sets into the loss function.
			\item Rigorous mathematical proofs are provided to explain the impact of reducing estimation errors on the accuracy of the solutions.
			\item We apply the number-theoretic method sampling neural network proposed in this paper to tackle challenging low-regularity and high-dimensional problems, achieving better performance.
		\end{itemize}

    The rest of the paper is organized as follows. In Section 2, we will give the problem formulation of PDEs and a brief review of PINNs.
    %go into more detail about PINNs and its loss function.
    In Section 3, we will describe GLP sampling and also present the neural network framework incorporating GLP sampling and give the error estimation. Numerical experiments are shown in Section 4 to verify the effectiveness of our method. Finally, conclusions are given in Section 5.

	% @@@@@@@@@@@@@@@@@@@@@@@@@@
	\section{Preliminary work}
	\label{sec:Preliminary Work}

\subsection{Problem formulation}
\label{sec:Problem formulation}
The general formulation of the PDE problem is as follows
\begin{equation}\label{eq:gen_PDE}
    \left\{
  \begin{aligned}
 \mathcal{A}[\bu(\bx)] &= f(\bx), \quad \bx \in \Omega, \\
 \mathcal{B}[\bu(\bx)] &= g(\bx), \quad \bx \in \partial\Omega.
  \end{aligned}
    \right.
\end{equation}
where $\Omega \subset \mathbb{R}^d$ represents a bounded, open and connected domain which surrounded by a polygonal boundary $\partial\Omega$.  \textcolor{black}{And $\bu(\bx) = [u_1(x),u_2(x),...,u_{d_{out}}(x)] \in \mathbb{R}^{d_{out}}$ is the unique solution of this problem. Without loss of generality, assume that $d_{out}=1$ in the subsequent content.} There are two partial differential operators $\mathcal{A}$ and $\mathcal{B}$ defined in $\Omega$ and on $\partial \Omega$, the $f(\bx)$ is the source function within $\Omega$ and $g(\bx)$ represents the boundary conditions on $\partial\Omega$.

%In order to highlight the advantages of MMPDE-Net, the solution $\bu(\bx)$ considered in this paper behaves large variations within a certain subregion of $\Omega$.}

 \subsection{ Introduction to PINNs}
\label{sec:Brief introduction to PINNs}

% We consider a general  form of partial differential equations
% \begin{equation}\label{eq:gen_PDE}
%     \left\{
%   \begin{aligned}
%  \mathcal{A}[\bu(\bx)] &= f(\bx), \quad \bx \in \Omega, \\
%  \mathcal{B}[\bu(\bx)] &= g(\bx), \quad \bx \in \partial\Omega,
%   \end{aligned}
%     \right.
% \end{equation}
% where $\Omega \subset  \mathbb{R}^d$ is a bounded, open and connected domain which has a polygonal boundary $\partial \Omega$, $\mathcal{A}$ is a partial differential operator, $f(\bx)$ is the source function, $\mathcal{B}$ and $g(\bx)$ are the boundary operator and the boundary condition.

% 通常来说，PINN的网络架构是由全连接神经网络组成的。假设全连接神经网络的输入是d维，输出是d_out维，每层的神经元有d_h个，共L层。那么，可以知道PINN的参数量(包含每层的权重和偏差)，共有(d*d_h+d_h)+(d_h*d_h+d_h)*(L-1)+(d_h*d_out+d_out)个

%As we mentioned in Section \ref{sec:intro}, 
 For a general PINN, its input consists of sampling points $\bx$ located within the domain $\Omega$ and on its boundary $\partial \Omega$,  and its output is the predicted solution $\bu(\bx;\theta)$, where $\theta$ represents the parameters of the neural network, typically comprising weights and biases. 
Suppose that $\mathcal{A}[\bu(\bx;\theta)]-f(\bx)\in \mathbb{L_2}(\Omega)$ and $\mathcal{B}[\bu(\bx;\theta)]-g(\bx)\in \mathbb{L_2}(\partial\Omega)$, the loss function can be expressed as

\begin{equation}\label{eq:L2_loss}
    \begin{aligned}
     \mathcal{L}(\bx;\theta) &= \alpha_1\Vert  \mathcal{A}[\bu(\bx;\theta)]-f(\bx) \Vert_{2,\Omega}^2 +  \alpha_2\Vert  \mathcal{B}[\bu(\bx;\theta)]-g(\bx) \Vert_{2,\partial\Omega}^2 \\
     &= \alpha_1\int_{\Omega} |\mathcal{A}[\bu(\bx;\theta)]-f(\bx)|^2 d\bx  + \alpha_2\int_{\partial\Omega} |\mathcal{B}[\bu(\bx;\theta)]-g(\bx)|^2 d\bx \\
     & \triangleq \alpha_1\int_{\Omega} |r(\bx;\theta)|^2 d\bx + \alpha_2\int_{\partial\Omega} |b(\bx;\theta)|^2 d\bx \\
     & \triangleq \alpha_1 \mathcal{L}_{r}(\bx;\theta) + \alpha_2 \mathcal{L}_{b}(\bx;\theta),
    \end{aligned}
\end{equation}
where $\alpha_1$ is the weight of the residual loss $\mathcal{L}_{r}(\bx;\theta)$ and $\alpha_2$ is the weight of the boundary loss $\mathcal{L}_{b}(\bx;\theta)$. 

We use the residual loss $\mathcal{L}_{r}(\bx;\theta)$ as an example of how to discretize by Monte Carlo method.
\begin{equation}\label{eq:MC}
    \begin{aligned}
     \mathcal{L}_{r}(\bx;\theta) &=\int_{\Omega} |r(\bx;\theta)|^2 d\bx = V(\Omega) \int_{\Omega} |r(\bx;\theta)|^2 U(\Omega) d\bx\\
     &= V(\Omega) \mathbb{E}_{\bx\sim U}(|r(\bx;\theta)|^2) \\
     & \approx V(\Omega)  \frac{1}{N_{r}} \sum_{i = 1}^{N_{r}} |r(x_i;\theta)|^2, \quad \bx \sim U(\Omega),
    \end{aligned}
\end{equation}
where $V$ is the volume, $U$ is the uniform distribution, and $\mathbb{E}$ is the mathematical expectation.  \textcolor{black}{Thus, after sampling uniformly distributed residual training points $\left\{\bx_i\right\}_{i=1}^{N_r} \subset \Omega$ and boundary training points $\left\{\bx_i\right\}_{i=1}^{N_b}\subset \partial\Omega$, the empirical loss with a total of $N =N_{r}+N_{b}$ training points can be written as
\begin{equation}\label{eq:L2_empiricalloss}
     \mathcal{L}_N(\bx;\theta) =   \frac{\alpha_1V(\Omega)}{N_{r}} \sum_{i = 1}^{N_{r}} |r(\bx_i;\theta)|^2 +
     \frac{\alpha_2V(\Omega)}{N_{b}} \sum_{i = 1}^{N_{b}} |b(\bx_i;\theta)|^2.
\end{equation}}

Finally, we usually use gradient descent methods, such as Adam \cite{Adam} and LBFGS \cite{LBFGS}, to optimize for empirical loss $\mathcal{L}_N(\bx;\theta)$. 
\textcolor{black}{Adam is a linearly convergent stochastic gradient descent algorithm. Its momentum and stochasticity effectively overcome local minima and saddle points, making it commonly used for the first part of optimization.  On the other hand, LBFGS is a superlinearly convergent low-memory quasi-Newton method. While it converges faster than Adam, it is more prone to getting stuck in saddle points. Therefore, LBFGS is typically used for the second part of optimization, specifically for fine-tuning the results obtained from Adam optimization.}

%When the empirical loss is small enough, we consider that the approximate solution already has a small enough error.

% In Algorithm \ref{alg:SGD},  the general form of the stochastic gradient descent method is given. Ideally, $\mathcal{L}_N(\bx;\theta)$ will keep decreasing and even reach 0. Then, at this point, the approximate solution of the PINNs on the training set can be considered to be the same as the true solution. In fact, when the empirical loss is small enough, we consider that the approximate solution already has a small enough error (\cite{shin2020convergence},\cite{shin2023error}).

% \begin{algorithm}[htbp]
% \caption{The general stochastic gradient descent algorithm}\label{alg:SGD}

% Initialize the parameters of neural networks: $\theta$= $\theta_1$ .

% Choose the learning rate at each training epochs: $\eta_i$.

% \For{ Traning epochs $i =1:M$}{
%  Select a random sample $x_i$ from the training set;

%  Calculating the gradient $g(x_i,\theta_i) = \frac{\partial \mathcal{L}_{N}((x_i;\theta_i))}{\partial \theta_i}$ ;
 
% Update $\theta_{i+1}$ by  $\theta_{i+1}$ = $\theta_{i} - \eta_i g(x_i,\theta_i) $.
% }

% \end{algorithm}

% % % @@@@@@@@@@@@@@@@@@@@@@@@@@
\section{Main results}
\label{sec:MainlyWork}
\subsection{Method}
\label{sec:Methods}
%In this section, we will describe our model in detail. 
 %We now discuss how to get a good lattice point set which is uniformly scattered. For convenience, suppose that %Consider t
%the sampling space is unit cube $C^d = [0,1]^d$. %and in fact, after obtaining the good lattice point set on  $C^d$, they can be transferred to other spaces by coordinate transformation.
%According to \cite{fang1993number}, some classical definitions of $C^d$ in the number-theoretic method are introduced.

%the degree of deviation in the spatial distribution of a sampling point set,
In NTM, a criterion is necessary to measure how the sampling points are scattered in space, which is called discrepancy. There are many types of discrepancy, such as discrepancy based on uniform distribution, F-discrepancy that can measure non-uniform distributions, and star discrepancy that considers all rectangles, among others \cite{fang2018theory}. The discrepancy $ D(N,X_{N})$ described in Definition \ref{de:discrepancy} is used to describe the uniformity of a point set $X_{N}$ on the unit cube $C^d = [0,1]^d$.
For a point set $X_{N}$, the smaller its discrepancy, the higher the uniformity of this point set, and the better it represents the uniform  distribution $U(C^d)$. 
Therefore, the definition of good lattice point set generated from number-theoretic method is introduced.

\begin{definition}\cite{fang1993number}\label{de:discrepancy}
    Let $X_{N}=\left\{x_i \right \}_{i=1}^{N}$ be a set of points on $C^d$. For a vector $\gamma \in C^d$, let $K(\gamma,X_{N})$ denotes the number of points in $X_{N}$ that satisfy $x_i<\gamma$. Then there is
    \begin{equation}
    D(N,X_{N}) = \sup_{\gamma \in C^d} \left\vert \frac{K(\gamma,X_{N})}{N} - V\left( [0,\gamma]\right) \right\vert, 
    \end{equation}
   which is called the discrepancy of $X_{N}$, where $V\left( [0,\gamma]\right)$ denotes the volume of the rectangle formed by the vector $\gamma$ from 0.
\end{definition}

\begin{definition} \cite{fang1993number}\label{de:glp}
\textcolor{black}{Let $GV=(N;h_1,h_2,...,h_d) \in C^d$ } to be a vector of integers that satisfy $1 \leq h_i < N$, $h_i \neq h_j (i \neq j)$, $d<N$ and the greatest common divisor $(N,h_i)=1,i=1,...,d$. Let
\begin{equation}\label{eq:glp}
    \left\{
    \begin{aligned}
     q_{ij} &\equiv ih_j \pmod{N},  \\
     x_{ij} &= \frac{2q_{ij}-1}{2N}, \quad i=1,...,N, \ j=1,...,d,
    \end{aligned}
    \right.
\end{equation}
where $q_{ij}$ satisfies $1 \leq q_{ij} \leq N$. Then the point set $X_N=\left\{x_i=(x_{i1},x_{i2},...,x_{id}), i=1,...,N \right \} $ is called the lattice point set of the generating vector $GV$. And a lattice point set $X_N$ is a good lattice point set (GLP set) if $X_N$ has the smallest discrepancy $D(N,X_N)$ among all possible generating vectors.
\end{definition}

Definition \ref{de:glp}  shows that GLP set has a good low discrepancy. Usually, the lattice points of an arbitrary generating vector are not uniformly scattered. Then, an important problem is how to choose the appropriate generating vectors to generate the GLP set via Eq \eqref{eq:glp}. According to (\cite{korobov1959evaluation,hlawka1962angenaherten}), there is a fact that for a given prime number $P$, there is a vector \textcolor{black}{$(h_1,h_2,...,h_d)$} can be chosen such that the lattice point set of \textcolor{black}{$(P;h_1,h_2,...,h_d)$} is GLP set. One feasible method (\cite{korobov1959evaluation,niederreiter1977pseudo}) to find the generating vector is described as follows:
\begin{comment}
\begin{lemma}\cite{keng1981applications}
    \label{lem:N_prime}
For any given prime number $P$, there is an integral vector $ (h_1,h_2,...,h_s)$ such that the lattice point set $X_P$ of $(P;h_1,h_2,...,h_s)$ has discrepancy

  $$D(P,X_P) \leq pc(d) \frac{(\log P)^d}{P},$$
where $pc(d)$ denotes a positive constant depending on the dimension of the unit cube $C^d$.
\end{lemma}
\end{comment}
% The method of finding the generating vectors is not unique.% and a feasible method is described below. 

%The proof of Lemma \ref{lem:N_prime} can be found in \cite{keng1981applications}. 
%Now we consider how to find the generating vector on the basis of the Lemma \ref{lem:N_prime}. 
\begin{itemize}
    \item[1)] For a lattice point set $X_N$ whose cardinality is a certain prime number $N=P$.
    \item[2)] Finding a certain special primitive root $a$ among the primitive roots modulo $P$.
    \item[3)] The generating vector satisfy $(h_1,h_2,...,h_d) \equiv(1,a,a^2,...,a^{d-1}) \pmod P$, $1<a<P$.
    \item[4)] The generating vector can be constructed in the form $GV=(P;1,h_1,h_2,...,h_d)$.
\end{itemize}
% For a lattice point set $X_N$ whose cardinality is a certain prime number $N=P$,  the generating vector can be constructed in the form $GV=(P;1,h_1,h_2,...,h_s)$, where $(h_1,h_2,...,h_s) \equiv(1,a,a^2,...,a^{d-1}) \pmod P$, $1<a<P$. By finding a certain special primitive root $a$ among the primitive roots modulo $P$, so that the lattice point set corresponding to $GV$ has the smallest discrepancy. If the number of sampling points is only prime, this may not satisfy our needs in many cases, and the more general case is considered below.

  When $N=2, 4, P^l, 2P^{l} $, where $P>2$ is a prime and $1\leq l $, 
  it is known that there must exist primitive roots mod $N$.  Moreover, the number of primitive roots is $\phi (\phi(N))$, where $\phi(N)$ denotes the cardinality of the set $\{ i| 1 \leq i <N, (i,N)=1 \}$. The generating vector can be obtained in the same way as above. Since the GLP set is very efficient and convenient, a number of generating vectors corresponding to some good lattice point sets on unit cubes in different dimensions have already been presented in \cite{keng1981applications}. \textcolor{black}{In fact, the method for obtaining a GLP set is not limited to the one mentioned above. For instance, it has been pointed out in \cite{matsubara2023good} that in two-dimensional cases, a GLP set can be constructed using the Fibonacci sequence, while in higher-dimensional cases, a GLP set can be obtained through Bernoulli polynomials \cite{sloan1994lattice}.}

  % which eliminates the need  to calculate the generating vectors from scratch.

%Next, after getting the GLP set, we return our attention to the loss function of PINNs. 
After getting the GLP set, we now discuss the calculation of the loss function of PINNs.
Since the boundary loss usually accounts for a relatively small percentage of the total and can even be completely equal to 0 by forcing the boundary conditions(\cite{lyu2020enforcing}, \cite{lyu2022mim}), we focus on residual loss in the following. 
According to the central limit theorem and the Koksma-Hlawka inequality \cite{hlawka1961funktionen}, the following lemma is shown in \cite{niederreiter1992random}.

\begin{lemma}\cite{niederreiter1992random}
    \label{lem:MC_Discrepancy}
(i) If $f(\bx) \in \mathbb{L_2}(C^d) $ and $\left\{\bx_i\right\}_{i=1}^{N} \subset C^d$ is a set of i.i.d. uniformly distributed random points, then there is

  $$ \int_{C^d} f^2(\bx) d\bx \leq  \frac{1}{N} \sum_{i = 1}^{N} f^2(x_i) + \bO\left(N^{-\frac{1}{2}}\right).$$
(ii)
If $f(\bx)$ over $C^d$ has bounded variation in the sense of Hardy and Krause and $\left\{\bx_i\right\}_{i=1}^{N} \subset [0,1)^d$ is a good lattice point set, then there is

 $$ \int_{C^d} f^2(\bx) d\bx \leq  \frac{1}{N} \sum_{i = 1}^{N} f^2(x_i) + \bO\left(\frac { (\log N)^{d} } {N}\right).$$
\end{lemma}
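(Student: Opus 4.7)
The plan is to treat the two parts in parallel, since both are bounds on the quadrature error $|\int_{C^d} f^2\,d\bx - \frac{1}{N}\sum_{i=1}^N f^2(x_i)|$ applied with the integrand $g(\bx) = f^2(\bx)$. The statement then follows by rearranging the two-sided inequality $|I - \hat I_N| \le \varepsilon_N$ into the single-sided form $I \le \hat I_N + \varepsilon_N$. So the real content is to establish the rates $\varepsilon_N = \mathcal{O}(N^{-1/2})$ and $\varepsilon_N = \mathcal{O}((\log N)^d/N)$ in the two regimes.

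For part (i), I would invoke the standard Monte Carlo variance estimate: the estimator $\hat I_N = \frac{1}{N}\sum_{i=1}^N f^2(x_i)$ is unbiased for $I = \int_{C^d} f^2\,d\bx$ because the $\bx_i$ are i.i.d.\ uniform on $C^d$, and
\begin{equation*}
\mathrm{Var}(\hat I_N) \;=\; \frac{1}{N}\,\mathrm{Var}\bigl(f^2(\bx_1)\bigr) \;\le\; \frac{\|f\|_{L_4(C^d)}^4}{N}.
\end{equation*}
Since $f \in L_2(C^d)$, this variance is finite under the mild assumption that $f^2 \in L_2$, i.e. $f \in L_4$; either by Chebyshev's inequality (giving the rate in probability) or the central limit theorem, the deviation $|\hat I_N - I|$ is of order $N^{-1/2}$, which delivers the required bound. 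I would flag that this is the standard sense in which the Monte Carlo error is $\mathcal{O}(N^{-1/2})$.

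For part (ii), the key tool is the Koksma--Hlawka inequality: for any function $g$ of bounded variation in the sense of Hardy and Krause,
\begin{equation*}
\left|\int_{C^d} g(\bx)\,d\bx - \frac{1}{N}\sum_{i=1}^N g(\bx_i)\right| \;\le\; V_{\mathrm{HK}}(g)\,D(N,X_N).
\end{equation*}
I would apply this with $g=f^2$, using the fact that Hardy--Krause variation is stable under products of bounded BV functions, so $V_{\mathrm{HK}}(f^2) < \infty$ whenever $V_{\mathrm{HK}}(f)<\infty$. Then I would cite the classical discrepancy bound for good lattice point sets (the Korobov--Hlawka estimate recalled just above Definition~\ref{de:glp}), which guarantees $D(N,X_N) = \mathcal{O}((\log N)^d / N)$ for the GLP set with a suitably chosen generating vector. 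Combining these two facts and rearranging yields the stated inequality.

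The main obstacle I expect is technical rather than conceptual: verifying that passing from $f$ to $f^2$ preserves the relevant regularity hypotheses in each part (finite variance of $f^2$ in (i), bounded Hardy--Krause variation of $f^2$ in (ii)), and being clear about the probabilistic interpretation of the $\mathcal{O}(N^{-1/2})$ bound versus the deterministic worst-case interpretation of the Koksma--Hlawka bound. Since the lemma is cited from \cite{niederreiter1992random}, I would keep the exposition short and treat the Koksma--Hlawka inequality and the discrepancy estimate for GLP sets as known ingredients, focusing on how they specialize to $g = f^2$.
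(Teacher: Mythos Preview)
Your proposal is correct and matches the paper's approach. The paper does not give a detailed proof of this lemma; it states it as a citation from \cite{niederreiter1992random} and only indicates the two key ingredients in the sentence preceding the lemma: ``According to the central limit theorem and the Koksma--Hlawka inequality \cite{hlawka1961funktionen}, the following lemma is shown in \cite{niederreiter1992random}.'' Your plan---CLT/variance for part~(i), Koksma--Hlawka plus the $\mathcal{O}((\log N)^d/N)$ discrepancy bound for GLP sets in part~(ii), applied to $g=f^2$---is exactly this, and your caveats about the probabilistic nature of the $\mathcal{O}(N^{-1/2})$ bound and about $f^2$ inheriting the regularity are precisely the points the paper addresses in Remark~\ref{rem:mk_lemma1}.
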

\begin{remark}
\label{rem:mk_lemma1}
(1) In Lemma \ref{lem:MC_Discrepancy}(i), the error bound is a probabilistic error bound of the form $\bO\left(N^{-\frac{1}{2}}\right)$ in terms of $N$.
(2) According to \cite{hlawka1961funktionen} and \cite{fu2022convergence}, a sufficient condition for a function f to have bounded variance in the Hardy-Krause sense is that f has continuous mixed partial differential derivatives. In fact, some common activation functions in neural networks, such as the hyperbolic tangent function and the Sigmoid function, they are infinite order derivable. Therefore, when the activation functions mentioned above are chosen, the function $r(\bx;\theta)$ and $b(\bx;\theta)$ induced by the predicted solutions $\bu(\bx;\theta)$ generated by the PINNs also satisfies the conditions of Lemma \ref{lem:MC_Discrepancy}(ii) (similar comments can be found in \cite{matsubara2023good}.).

\end{remark}

Recalling Eq \eqref{eq:MC}, the discrete residual loss in $\Omega = (0,1)^{d}$ using uniform sampling can be written as
\begin{equation}\label{eq:DiscreteResLoss}
    \begin{aligned}
     \mathcal{L}_{r}(\bx;\theta) &=\int_{\Omega} |r(\bx;\theta)|^2 d\bx =  \mathbb{E}_{\bx\sim U}(|r(\bx;\theta)|^2) \approx \frac{1}{N_{r}} \sum_{i = 1}^{N_{r}} |r(x_i;\theta)|^2, \quad \bx \sim U(\Omega).   
    \end{aligned}
\end{equation}
% \triangleq \mathcal{L}_{r,N_r}(\bx;\theta)

Given a good lattice point set $\mathcal{X}_{N_r}=\left\{\bx_i\right\}_{i=1}^{N_r} \subset \Omega$ , there is a discrete residual loss defined in Eq \eqref{eq:GLPResLoss}. In notation, we use superscripts to distinguish between sampling methods, where GLP denotes good lattice point set sampling and UR denotes uniform random sampling.
\begin{equation}\label{eq:GLPResLoss}
     \mathcal{L}_{r,N_r}^{GLP}(\bx;\theta) = \frac{1}{N_{r}} \sum_{i = 1}^{N_{r}} |r(x_i;\theta)|^2, x_i \in \mathcal{X}_{N_r}.  
\end{equation}

If the conditions of Lemma \ref{lem:MC_Discrepancy} are satisfied, the error bound between $\mathcal{L}_{r,N_r}$ and $\mathcal{L}_{r}$ using GLP sampling will be smaller than  the error using uniform random sampling when there are enough training points, which will result in smaller error estimate presented in Section \ref{sec:Error estimation}. According to Eq \eqref{eq:L2_empiricalloss}, we have the following loss function.
\textcolor{black}{
\begin{equation}\label{eq:GLPLoss}
     \mathcal{L}_N(\bx;\theta) =   \alpha_1 \mathcal{L}_{r,N_r}^{GLP}(\bx;\theta)  +
     \alpha_2 \mathcal{L}_{b,N_b}^{UR}(\bx;\theta),
\end{equation}
where $\mathcal{L}_{b,N_b}^{UR}(\bx;\theta) = \frac{1}{N_{b}} \sum_{i = 1}^{N_{b}} |b(x_i;\theta)|^2$, $x_i$ belongs to a uniform random point set $\mathcal{X}_{N_b}=\left\{\bx_i\right\}_{i=1}^{N_b} \subset \partial\Omega$ and $N =N_r + N_b$.}

Finally, the gradient descent method is used to optimize the loss function Eq \eqref{eq:GLPLoss}. The flow of our approach is summarized in Algorithm \ref{alg:NTM-PINN} and Fig \ref{fig:flowchart}. Without loss of generality, in all subsequent derivations, we choose $\Omega =(0,1)^d$, and $\alpha_1=\alpha_2=1$ in Algorithm \ref{alg:NTM-PINN}. 

In our work, no matter how the set of points is formed, the uniform distribution sampling approach is utilized. In order to give the error estimate of our method under the premise that both $\mathcal{A}$ and $\mathcal{B}$ in Eq  \eqref{eq:gen_PDE} are linear operators, several necessary assumptions are introduced.
\begin{assumption}
    \label{asu:Loss_LCcondition}
    (Lipschitz continuity) For any bounded neural network parameters $\theta_1$ and $\theta_2$, there exists a postive constant $\mathbf{L}$ such that the empirical loss function satisfies

    \begin{equation}\label{eq:Loss_LCcondition}
        \Vert \nabla \mathcal{L}_{N}(\bx;\theta_1) -\nabla \mathcal{L}_{N}(\bx;\theta_2) \Vert_{2} \leq \mathbf{L} \Vert  \theta_1-\theta_2 \Vert_{2}.
    \end{equation}

\end{assumption}

Assumption \ref{asu:Loss_LCcondition} is a common assumption in optimization analysis since it is critical to derive Eq \eqref{eq:descentlemma}, which is known as descent lemma.
%can be used to derive an important formula, also known as descent lemma Eq \eqref{eq:descentlemma}.
\begin{equation}\label{eq:descentlemma}
         \mathcal{L}_{N}(\bx;\theta_1)  \leq \mathcal{L}_{N}(\bx;\theta_2) + \nabla \mathcal{L}_{N}(\bx;\theta_2)^T(\theta_1-\theta_2 ) + \frac{1}{2}  \mathbf{L}  \Vert  \theta_1-\theta_2 \Vert_{2}^2.
\end{equation}

\textcolor{black}{Consider a fully connected neural network with  $l_{NN}$ layers, each layer having a width of $h_{NN}$, and with a linear output layer, 
\begin{equation}\label{eq:FCNN}
\left\{
\begin{aligned}
 &Input \ f_0^{NN} = \bx,  \\
 &Hidden \ f_{i}^{NN} = \sigma(\frac{W^{(i)}}{\sqrt{h_{NN}}} f_{i-1}^{NN}), \ 1 \leq i \leq l_{NN}-1, \\
 &Output \ \bu(\bx;\theta) = \frac{W^{(i)}}{\sqrt{h_{NN}}} f_{l_{NN}-1}^{NN}.
\end{aligned}
\right.
\end{equation}
Then, we have the following lemma.
\begin{lemma} \cite{liu2022loss}
    \label{lem:PL*}
Suppose that the initialization parameters of each layer satisfy 
$W^{(i)}_0 \sim \mathcal{N}(0,\mathbf{I}_{h_{NN} \times h_{NN}}), i\leq l_{NN}+1$ and the minimum eigenvalue of the tangent kernel of the loss function  $\lambda_{min} >0$. $\forall c \in (0, \lambda_{min})$, if the width $h_{NN}$
satisfy 
    \begin{equation}\label{eq:lem2_1}
        h_{NN} = \tilde{\Omega} \left(\frac{R^{2+6l_{NN}}}{(\lambda_{min}-c)^2}\right),
    \end{equation}
then $c-PL^*$ condition holds for the loss function in the ball $B(\theta_0, R)$,
\begin{equation}\label{eq:lem2_2}
\Vert \nabla \mathcal{L}_{N}(\bx;\theta)  \Vert_2^2 \geq c  \mathcal{L}_{N}(\bx;\theta) ,\ \forall \theta \in B(\theta_0, R).
\end{equation}
\end{lemma}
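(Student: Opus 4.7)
The plan is to reduce the $c$-$PL^*$ inequality to a quantitative spectral perturbation statement about the \emph{tangent kernel} (NTK) of the network. Write $J(\theta) = \nabla_\theta \bu(\cdot;\theta)$ for the Jacobian of the network output with respect to the parameters, sampled at the training points, and let $K(\theta) = J(\theta)J(\theta)^T$ be the tangent kernel Gram matrix. A short calculation using the chain rule on the squared-residual form of $\mathcal{L}_N$ (as in Eq \eqref{eq:L2_empiricalloss}) gives an identity of the form $\nabla \mathcal{L}_N(\bx;\theta) = J(\theta)^T r(\theta)$ where $r(\theta)$ is the residual vector, so that
\begin{equation*}
\|\nabla \mathcal{L}_N(\bx;\theta)\|_2^2 = r(\theta)^T K(\theta) r(\theta) \;\geq\; \lambda_{\min}(K(\theta))\,\|r(\theta)\|_2^2 \;\asymp\; \lambda_{\min}(K(\theta))\,\mathcal{L}_N(\bx;\theta).
\end{equation*}
Thus it suffices to guarantee $\lambda_{\min}(K(\theta)) \geq c$ for every $\theta \in B(\theta_0,R)$.

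By hypothesis $\lambda_{\min}(K(\theta_0)) \geq \lambda_{\min}$, so by Weyl's inequality it is enough to prove the uniform perturbation bound $\|K(\theta) - K(\theta_0)\|_{\mathrm{op}} \leq \lambda_{\min} - c$ throughout $B(\theta_0,R)$. I would establish this by first controlling the Jacobian itself, i.e. showing that $J$ is Lipschitz in $\theta$ with a constant $L_J$ depending polynomially on $R$ and the depth $l_{NN}$, and that $\|J(\theta_0)\|_{\mathrm{op}}$ is bounded with high probability under the Gaussian initialization. A standard layer-by-layer induction through the forward/backward passes of the network \eqref{eq:FCNN}, using the $1/\sqrt{h_{NN}}$ normalization together with concentration of the Gaussian weights, yields bounds of the form $\|f_i^{NN}\|_2 = \tilde O(1)$, $\|W^{(i)}\|_{\mathrm{op}} = \tilde O(\sqrt{h_{NN}})$, and layerwise activation perturbations controlled by products of operator norms, producing the characteristic $R^{3l_{NN}}$-type scaling for $L_J$ after accounting for the $l_{NN}$ layers of composition and the perturbation of each weight matrix within $B(\theta_0,R)$.

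Combining these, $\|K(\theta)-K(\theta_0)\|_{\mathrm{op}} \leq 2\|J(\theta_0)\|_{\mathrm{op}} L_J R + L_J^2 R^2 = \tilde O\!\bigl(R^{1+3l_{NN}}/\sqrt{h_{NN}}\bigr)$ with high probability; demanding this to be below $\lambda_{\min}-c$ gives exactly the width requirement $h_{NN} = \tilde{\Omega}(R^{2+6l_{NN}}/(\lambda_{\min}-c)^2)$ of \eqref{eq:lem2_1}. Plugging back into the NTK lower bound on $\|\nabla\mathcal{L}_N\|_2^2$ yields \eqref{eq:lem2_2}.

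The main obstacle is the second step: producing clean, high-probability bounds on the forward activations, backward sensitivities, and Jacobian Lipschitz constant that propagate through all $l_{NN}$ layers without losing the right dependence on $R$ and $h_{NN}$. The compounding of operator-norm perturbations across depth is what generates the $R^{6l_{NN}}$ factor, and making the constants tight enough to match the advertised width bound — in particular balancing the Gaussian concentration radius against the Lipschitz growth from each layer — is the delicate part. Once those layerwise estimates are in hand, the remaining pieces (Weyl's inequality, the NTK identity for the gradient, and the union bound over $B(\theta_0,R)$ via a net argument combined with continuity) are routine.
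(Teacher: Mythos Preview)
The paper does not prove this lemma at all: it is quoted verbatim as Theorem~4 of \cite{liu2022loss} and invoked as a black box, so there is no ``paper's own proof'' to compare against. Your sketch is a faithful outline of the argument in that reference --- reduce the $PL^*$ inequality to a lower bound on $\lambda_{\min}(K(\theta))$, then control $\|K(\theta)-K(\theta_0)\|_{\mathrm{op}}$ by layerwise Lipschitz estimates on the Jacobian under Gaussian initialization and $1/\sqrt{h_{NN}}$ scaling, and solve for the width.

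One point worth flagging: in this paper $\mathcal{L}_N$ is a PINN loss, so the ``residual vector'' in your chain-rule identity is not $\bu(\cdot;\theta)-\text{target}$ but rather $(\mathcal{A}[\bu(\cdot;\theta)]-f,\;\mathcal{B}[\bu(\cdot;\theta)]-g)$, and the relevant Jacobian $J(\theta)$ is that of the \emph{composed} map $\theta\mapsto(\mathcal{A}[\bu],\mathcal{B}[\bu])$. The original result in \cite{liu2022loss} is stated for plain regression; the paper here simply applies it without re-deriving the layerwise bounds for the differentiated network. Your sketch inherits the same implicit assumption, which is fine for the purposes of this paper but would need care if you wanted a self-contained argument.
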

As is known to all, for an over-parameterized deep feedforward neural network with nonlinear activation functions, its squared loss function doesn't have the global strong convexity property, nor does it meet the Polyak-Łojasiewicz(PL) condition and invexity \cite{karimi2016linear}. To gain a deeper understanding of the optimization mechanism of neural networks, we need to look into the local properties of the loss function. According to Lemma \ref{lem:PL*}  (Theorem 4 in \cite{liu2022loss}), the loss function meets the $PL^*$ condition within the ball centered at the initial parameters under certain conditions.}

 If the stochastic gradient descent method is utilized, the parameter update equation at the i+1st iteration in the general stochastic gradient descent method is given by
\begin{equation}\label{eq:sGV}
          \theta_{i+1} = \theta_{i} - \eta_i g(x_i,\theta_i),
\end{equation}
where $\eta_i$ is the step size at the i+1st iteration. \textcolor{black}{After M training epochs, we can obtain M+1 parameters $\left\{\theta_i\right\}_{i=0}^M$. According to Section 5 in \cite{liu2022loss}, we assume that the optimization path is covered by a ball $B(\theta_0, R)$.
\begin{assumption}
    \label{asu:PLcondition}
 Assume that our neural network satisfies the conditions of Lemma \ref{lem:PL*}, and 
 $$\exists R >0,\ s.t. ,\ \left\{\theta_i\right\}_{i=0}^M \subset B(\theta_0, R).$$
\end{assumption}
Combining Lemma \ref{lem:PL*} and Assumption \ref{asu:PLcondition}, it can be concluded that the loss function satisfies the $PL^*$ condition during the optimization process. For convenience, we assume that the loss function satisfies the 2c - $PL^*$ condition, that is,
\begin{equation}\label{eq:2cPL}
         \Vert \nabla \mathcal{L}_{N}(\bx;\theta)  \Vert_2^2 \geq 2c  \mathcal{L}_{N}(\bx;\theta).
\end{equation}
} 
Same as the reference \cite{bottou2018optimization,chen2021quasi}, we give the following assumption about the stochastic gradient vector 
$g(x_i,\theta_i) : = \nabla \mathcal{L}_{N}(x_i;\theta_i)$.

\begin{assumption}
    \label{asu:stochastic_gradient}
    \textcolor{black}{
    For  the stochastic gradient vector $g(x_i,\theta_i) := \nabla \mathcal{L}_{N}(x_i;\theta_i) $, $\forall i \in \mathbb{N}^+$,}

    \textcolor{black}{
    (i) there exists $0<\mu \leq \sqrt{\frac{\mathbf{L}}{c}} \mu_G$ such that the  expectation $\mathbb{E}_{x_i}\left(g(x_i,\theta_i)\right)$ satisfies
    \begin{eqnarray}
     & &\nabla \mathcal{L}_{N}(\bx;\theta_i)^T \mathbb{E}_{x_i}\left(g(x_i,\theta_i)\right)  \geq  \mu  \Vert \nabla \mathcal{L}_{N}(\bx;\theta_i) \Vert_2^2, \label{eq:asumoment1}\\
%    \end{equation}
% \begin{equation}\label{eq:asumoment2}
      & & \Vert  \mathbb{E}_{x_i}\left(g(x_i,\theta_i)\right) \Vert_{2} \leq  \mu_G  \Vert\nabla\mathcal{L}_{N}(\bx;\theta_i) \Vert_2, \label{eq:asumoment2}
    \end{eqnarray}}

    (ii) there exist $C_V \geq 0$ and $M_V \geq 0$ such that the variance
    $\mathbb{V}_{x_i}\left(g(x_i,\theta_i)\right)$ satisfies
    \begin{equation}\label{eq:asumoment3}
       \mathbb{V}_{x_i}\left(g(x_i,\theta_i)\right) \leq C_Vs(N_r,N_b)+M_V(1+s(N_r,N_b))\Vert \nabla \mathcal{L}_{N}(\bx;\theta_i) \Vert_2^2.
    \end{equation}
\end{assumption}
In Assumption \ref{asu:stochastic_gradient}, $s(N_r,N_b)$ is the square of the error between the empirical loss function $\mathcal{L}_{N}$ and the loss function in integral form $\mathcal{L}$. For example, if the conditions of Lemma \ref{lem:MC_Discrepancy} are satisfied, the $s(N_r,N_b) = \left( \bO\left({N_r}^{-\frac{1}{2}}\right) + \bO\left({N_d}^{-\frac{1}{2}}\right) \right)^2$ for the uniform random sampling and $s(N_r,N_b) = \left( \bO\left(\frac { (\log N_r)^{d} } {N_r}\right) + \bO\left({N_d}^{-\frac{1}{2}}\right)\right)^2$  for the GLP set. According to \cite{das-pinns} and \cite{MSPINN}, the following stability bound assumption is given.
\begin{assumption}
    \label{asu:Normrelations}
%Assume that $\forall \bu \in L^2(\Omega)$ satisfy
  Let  $\mathbb{L_2}(\Omega)$ be the $L_2$ space defined on  $\Omega$, the following assumption holds
    \begin{equation}\label{eq:Normrelations}
        C_1 \Vert \bu \Vert_{2} \leq \Vert \mathcal{A}[\bu] \Vert_{2} +  \Vert \mathcal{B}[\bu] \Vert_{2} \leq C_2 \Vert \bu \Vert_{2}, \  \forall \bu \in \mathbb{L_2}(\Omega),
    \end{equation}
    where  $\mathcal{A}$ and $\mathcal{B}$ in Eq \eqref{eq:gen_PDE} are linear operators, $C_1$ and $C_2$ are positive constants.
    \end{assumption}

%In the next subsection, we give the error estimate of our method under the premise that both $\mathcal{A}$ and $\mathcal{B}$ in Eq  \eqref{eq:gen_PDE} are linear operators.   

\begin{algorithm}[htbp]
\caption{Algorithm of PINNs on GLP set}\label{alg:NTM-PINN}

\textbf{Symbols:} Maximum epoch number of PINNs training: $M$; the numbers of total points in $\Omega$  and $\partial \Omega$: $N_r$ and $N_b$; generating vector: $GV=(N_r;h_1,h_2,.... ,h_d)$; uniform boundary training points $\mathcal{X}_{N_b} :=\left\{\bx_k \right\}_{k=1}^{N_b} \subset \partial\Omega$;  the number of test set points: $N_T$; test set $\mathcal{X}_T :=\left\{\bx_k \right\}_{k=1}^{N_T} \subset \Omega \cup \partial\Omega$; the parameters of PINNs: $\theta$ .

\textcolor{black}{
\textbf{GLP Sampling:}\\
Based on the number of points $N_r$, find the generating vector $GV=(N_r;h_1,h_2,.... ,h_d)$.}

\textcolor{black}{
Input generating vector $GV=(N_r;h_1,h_2,.... ,h_d)$ for the GLP set:}

\textcolor{black}{
$x_{ij} = \frac{2q_{ij}-1}{2N_r}$, where $ q_{ij} \equiv ih_j \pmod{N_r},\quad i=1,...,N_r, j=1,...,d.$}

\textcolor{black}{
Get the GLP set as the residual training set $ \mathcal{X}_{N_r} := \left\{x_k=(x_{k1},x_{k2},...,x_{kd})\right \}_{k=1}^{N_r} \subset \Omega$ .
}

\textbf{Training:}\\
Input  $\mathcal{X}_{N} := \mathcal{X}_{N_r} \cup \mathcal{X}_{N_b}$ into PINNs. 

Initialize the output $\bu\left( \mathcal{X}_{N};\theta_0 \right)$.

\For{$i =1:M$}{
 
$\mathcal{L}_{N}\left(\mathcal{X}_{N};\theta_i\right) = \alpha_1 \mathcal{L}_{r,N_r}^{GLP}\left(\mathcal{X}_{N_r};\theta_i\right)  +
     \alpha_2 \mathcal{L}_{b,N_b}^{UR}\left(\mathcal{X}_{N_b};\theta_i\right)$;

Update $\theta_{i+1}$ by optimizing $\mathcal{L}_{N}\left(\mathcal{X}_{N};\theta_i\right)$.
}

\textbf{Testing:}\\
Input  test set $\mathcal{X}_T$ into PINNs. 

Output $u(\mathcal{X}_T;\theta_{M+1})$.
\end{algorithm}

\begin{figure}[htbp]
\centering
\includegraphics[width = 0.95\textwidth]{./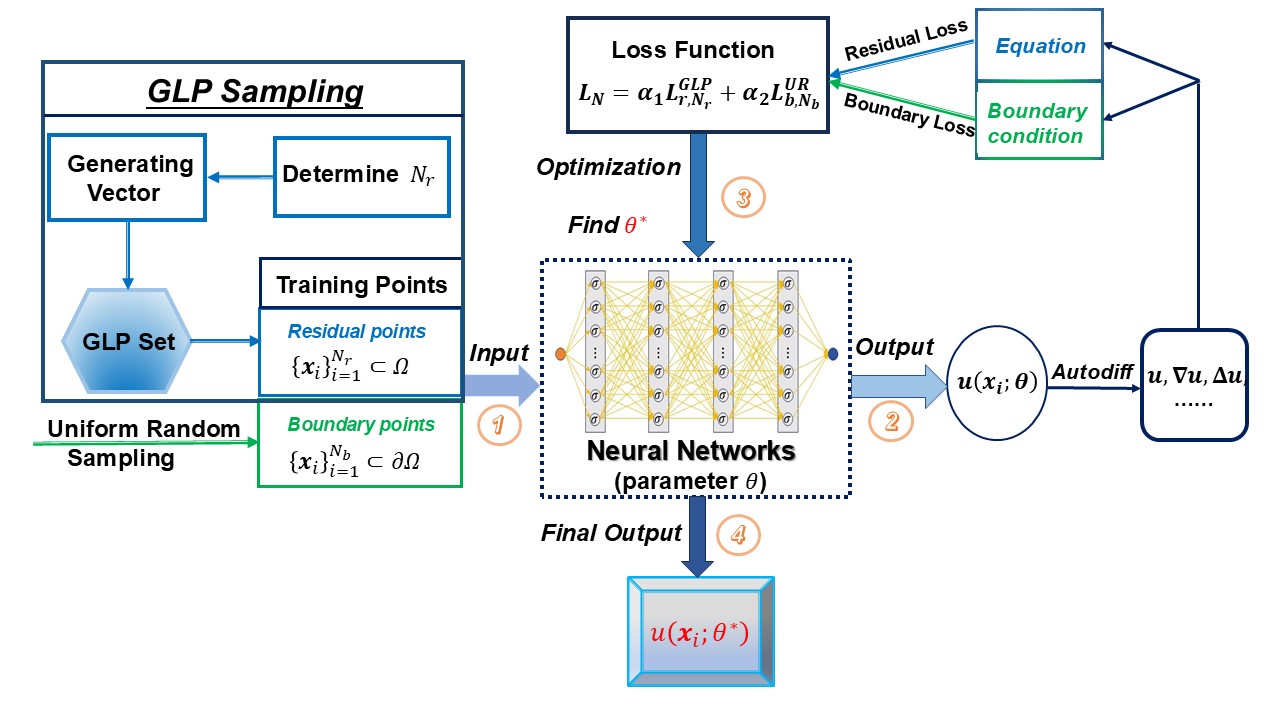}
\caption{Flow chart of our method.}
\label{fig:flowchart}
\end{figure}

\subsection{Error Estimation}
\label{sec:Error estimation}
In this subsection, the error estimate of our method is presented in Theorem \ref{thm:1}, \ref{thm:2}  and \ref{thm:3} on the basis of the previous Assumptions. Corollary \ref{cor:1} shows that the upper bound of the error estimate for our method is better than that of uniform random set.

\begin{theorem}
\label{thm:1}
Suppose that Assumptions \ref{asu:Loss_LCcondition},\ref{asu:PLcondition} and \ref{asu:stochastic_gradient} hold and the stepsize $\eta$ is fixed and satisfies
\begin{equation}\label{eq:assumptioninThm1}
0<\eta\leq \frac{\mu}{ \mathbf{L}\left(M_V(1+s(N_r,N_b))+\mu^2_G\right)},
\end{equation}
then we have
\begin{equation}\label{eq:optim_gap}
    \lim_{i \to \infty} \mathbb{E}\left(\mathcal{L}_{N}(\bx;\theta_i)\right) = \frac{\eta \mathbf{L}C_V}{2c\mu} s(N_r,N_b).
\end{equation}
\end{theorem}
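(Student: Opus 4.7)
The plan is to combine the descent lemma \eqref{eq:descentlemma} with the SGD update \eqref{eq:sGV} and invoke the $2c$-$PL^*$ condition \eqref{eq:2cPL} to obtain a contractive affine recurrence whose fixed point equals the claimed limit.

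First I would substitute $\theta_{i+1}-\theta_i = -\eta\,g(x_i,\theta_i)$ into \eqref{eq:descentlemma} and take conditional expectation with respect to $x_i$ given $\theta_i$. Using the variance identity $\mathbb{E}_{x_i}\|g(x_i,\theta_i)\|_2^2 = \mathbb{V}_{x_i}(g(x_i,\theta_i)) + \|\mathbb{E}_{x_i}g(x_i,\theta_i)\|_2^2$ together with the three bounds \eqref{eq:asumoment1}, \eqref{eq:asumoment2}, \eqref{eq:asumoment3} of Assumption \ref{asu:stochastic_gradient}, this gives
\begin{equation*}
\mathbb{E}_{x_i}\mathcal{L}_{N}(\bx;\theta_{i+1}) \leq \mathcal{L}_{N}(\bx;\theta_i) - \eta\mu\,\|\nabla \mathcal{L}_{N}(\bx;\theta_i)\|_2^2 + \tfrac{\mathbf{L}\eta^2}{2}\Bigl(C_V s + \bigl(M_V(1+s)+\mu_G^2\bigr)\|\nabla \mathcal{L}_{N}(\bx;\theta_i)\|_2^2\Bigr),
\end{equation*}
with $s := s(N_r,N_b)$.

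Next I would use the step-size restriction \eqref{eq:assumptioninThm1}, which is tailored exactly to force $\frac{\mathbf{L}\eta^2}{2}(M_V(1+s)+\mu_G^2)\leq \frac{\eta\mu}{2}$, so that half of the gradient-squared term survives with negative sign. Applying then the $2c$-$PL^*$ condition $\|\nabla \mathcal{L}_{N}(\bx;\theta_i)\|_2^2 \geq 2c\,\mathcal{L}_{N}(\bx;\theta_i)$ and taking total expectation yields the affine recurrence
\begin{equation*}
\mathbb{E}\,\mathcal{L}_{N}(\bx;\theta_{i+1}) \leq (1-\eta\mu c)\,\mathbb{E}\,\mathcal{L}_{N}(\bx;\theta_i) + \tfrac{\mathbf{L}\eta^2 C_V}{2}\,s.
\end{equation*}
The unique fixed point of this scalar affine map is $\mathcal{L}^\star := \frac{\eta\,\mathbf{L}\,C_V}{2c\mu}\,s$, so subtracting it from both sides gives
\begin{equation*}
\mathbb{E}\,\mathcal{L}_{N}(\bx;\theta_{i+1}) - \mathcal{L}^\star \leq (1-\eta\mu c)\bigl(\mathbb{E}\,\mathcal{L}_{N}(\bx;\theta_i) - \mathcal{L}^\star\bigr),
\end{equation*}
and iterating yields \eqref{eq:optim_gap} in the limit $i\to\infty$, provided the contraction factor lies in $[0,1)$.

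The only non-routine point is verifying $0\leq 1-\eta\mu c<1$, which is where the somewhat unusual upper bound $\mu \leq \sqrt{\mathbf{L}/c}\,\mu_G$ in Assumption \ref{asu:stochastic_gradient}(i) does its work: combined with \eqref{eq:assumptioninThm1} it gives $\eta\mu c \leq \frac{\mu c}{\mathbf{L}\mu_G^2}\cdot\mu \leq 1$, while positivity of $\eta$, $\mu$, $c$ makes the factor strictly less than one. Once this check is in place the rest is a direct unwinding of the affine recursion, and the constants assemble to exactly $\frac{\eta\,\mathbf{L}\,C_V}{2c\mu}\,s(N_r,N_b)$ as claimed.
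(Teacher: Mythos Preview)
Your proposal is correct and follows essentially the same route as the paper's proof: descent lemma plus SGD update, the variance identity combined with Assumption~\ref{asu:stochastic_gradient} to bound the second moment, the step-size restriction~\eqref{eq:assumptioninThm1} to retain $-\tfrac{\mu\eta}{2}\|\nabla\mathcal{L}_N\|_2^2$, the $2c$-$PL^*$ inequality to reach the affine recurrence, and finally the contraction check $0<c\mu\eta\leq \frac{c\mu^2}{\mathbf{L}\mu_G^2}\leq 1$ via $\mu\leq\sqrt{\mathbf{L}/c}\,\mu_G$. The structure, the intermediate inequalities, and the constants all match the paper's argument.
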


\begin{proof}
See Appendix A.

\end{proof}

\begin{remark}
\label{rem:mk_thm1}
 
It is noted that the step size (learning rate $\eta$) in Theorem \ref{thm:1} is fixed. However, when we implement the algorithm, a diminishing learning rate sequence $\{\eta_i \}$ is often selected. We can still obtain the following Theorem \ref{thm:2}, which is  similar  to Theorem \ref{thm:1}.
%calculations are performed at the code level, the "Scheduler" is often utilized to select a diminishing learning rate sequence $\{\eta_i \}$ . In fact, when $\{\eta_i \}$  is a sequence of diminishing step size, we can still have the following similar Theorem.
\end{remark}

\begin{theorem}
\label{thm:2}
Suppose that Assumptions \ref{asu:Loss_LCcondition},\ref{asu:PLcondition} and \ref{asu:stochastic_gradient} hold and the stepsize $\{\eta_i\}$ is a diminishing sequence
and satisfies
$\{ \eta_i = \frac{\beta}{\xi+i}, \ \text{for all} \ i \in \mathbb{N} \}$ for some $\beta > \frac{1}{c\mu}$ and $\xi >0$ such that $\eta_1 \leq \frac{\mu}{ \mathbf{L}\left(M_V(1+s(N_r,N_b))+\mu^2_G\right)},$
then we have
\begin{equation}\label{eq:optim_gap_diminishingstep}
   \mathbb{E}\left(\mathcal{L}_{N}(\bx;\theta_i)\right) \leq \frac{\kappa }{\xi+i} s(N_r,N_b), \  \forall i \in \mathbb{N},
\end{equation}
where $\kappa := max \{ \frac{\beta^2 \mathbf{L}  C_V}{2(\beta c \mu -1)}, \frac{(\xi+1) \mathcal{L}_{N}(\bx;\theta_1)}{s(N_r,N_b)}\}$.
\end{theorem}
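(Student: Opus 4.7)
The plan is to mimic the fixed-stepsize argument of Theorem \ref{thm:1} until the one-step recursion is obtained, and then close the argument by induction on $i$ using the specific form $\eta_i=\beta/(\xi+i)$.

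First I would derive a one-step recursion on $\mathbb{E}[\mathcal{L}_N(\bx;\theta_i)]$. Starting from the descent lemma \eqref{eq:descentlemma} applied to $\theta_{i+1}=\theta_i-\eta_i g(x_i,\theta_i)$, I get
\begin{equation*}
\mathcal{L}_N(\bx;\theta_{i+1}) \leq \mathcal{L}_N(\bx;\theta_i) - \eta_i \nabla \mathcal{L}_N(\bx;\theta_i)^T g(x_i,\theta_i) + \tfrac{\eta_i^2 \mathbf{L}}{2}\|g(x_i,\theta_i)\|_2^2.
\end{equation*}
Taking conditional expectation with respect to $x_i$, using \eqref{eq:asumoment1}, the identity $\mathbb{E}\|g\|_2^2=\mathbb{V}(g)+\|\mathbb{E}(g)\|_2^2$, \eqref{eq:asumoment2} and \eqref{eq:asumoment3} yields
\begin{equation*}
\mathbb{E}_{x_i}[\mathcal{L}_N(\bx;\theta_{i+1})] \leq \mathcal{L}_N(\bx;\theta_i) - \eta_i \mu \|\nabla \mathcal{L}_N(\bx;\theta_i)\|_2^2 + \tfrac{\eta_i^2 \mathbf{L}}{2}\bigl[C_V s + (M_V(1+s)+\mu_G^2)\|\nabla \mathcal{L}_N(\bx;\theta_i)\|_2^2\bigr],
\end{equation*}
where I abbreviate $s := s(N_r,N_b)$. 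The hypothesis $\eta_i\leq \eta_1 \leq \mu/[\mathbf{L}(M_V(1+s)+\mu_G^2)]$ lets me absorb the gradient-norm term, leaving
\begin{equation*}
\mathbb{E}_{x_i}[\mathcal{L}_N(\bx;\theta_{i+1})] \leq \mathcal{L}_N(\bx;\theta_i) - \tfrac{\eta_i \mu}{2}\|\nabla \mathcal{L}_N(\bx;\theta_i)\|_2^2 + \tfrac{\eta_i^2 \mathbf{L} C_V}{2} s.
\end{equation*}
Invoking the $2c$-$PL^*$ condition \eqref{eq:2cPL} and taking the total expectation then gives the clean recursion
\begin{equation*}
\mathbb{E}[\mathcal{L}_N(\bx;\theta_{i+1})] \leq (1-\eta_i c \mu)\,\mathbb{E}[\mathcal{L}_N(\bx;\theta_i)] + \tfrac{\eta_i^2 \mathbf{L} C_V}{2} s.
\end{equation*}

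The remaining step is an induction on $i$ to show $\mathbb{E}[\mathcal{L}_N(\bx;\theta_i)]\leq \kappa s/(\xi+i)$. The base case $i=1$ follows directly from the definition of $\kappa$, because $\kappa \geq (\xi+1)\mathcal{L}_N(\bx;\theta_1)/s$. For the inductive step, I substitute $\eta_i=\beta/(\xi+i)$ and the inductive hypothesis into the recursion and collect terms, obtaining
\begin{equation*}
\mathbb{E}[\mathcal{L}_N(\bx;\theta_{i+1})] \leq \frac{\kappa s}{\xi+i} - \frac{(\beta c \mu \kappa - \tfrac{1}{2}\beta^2 \mathbf{L} C_V)\,s}{(\xi+i)^2}.
\end{equation*}
The choice $\kappa \geq \beta^2 \mathbf{L} C_V/[2(\beta c \mu-1)]$ is exactly what is needed for the coefficient in the second term to be $\geq \kappa$ (this is where the hypothesis $\beta > 1/(c\mu)$ becomes essential — otherwise the denominator is nonpositive). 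Thus the right-hand side is bounded by $\kappa s (\xi+i-1)/(\xi+i)^2$, and the elementary inequality $(\xi+i-1)(\xi+i+1)\leq (\xi+i)^2$ closes the induction.

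The main obstacle I anticipate is not any single step but the careful bookkeeping that ties the two ingredients of $\kappa$ to the two places they are needed: the initial-condition term handles the base case, while the $\beta^2 \mathbf{L} C_V/[2(\beta c\mu-1)]$ term is precisely calibrated so the induction telescopes. All other steps parallel the fixed-stepsize argument of Theorem \ref{thm:1}, so I expect the proof to be essentially a transcription of that argument up to the recursion, followed by this short induction.
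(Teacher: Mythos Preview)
Your proposal is correct and matches the paper's proof essentially step for step: the paper also recalls the one-step recursion from the proof of Theorem~\ref{thm:1}, establishes the base case from the second branch of $\kappa$, and for the inductive step substitutes $\eta_k=\beta/(\xi+k)$, uses $\kappa\geq \beta^2\mathbf{L}C_V/[2(\beta c\mu-1)]$ to drop the nonpositive piece, and closes with the inequality $(\xi+k-1)/(\xi+k)^2\leq 1/(\xi+k+1)$. The only cosmetic difference is that the paper splits the right-hand side into three terms before cancelling two of them, whereas you group them into a single subtracted term and bound its coefficient by $\kappa$; algebraically these are identical.
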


\begin{proof}
See Appendix A.

\end{proof}

% \textcolor{black}{The subsequent Theorem \ref{thm:4} presents the findings derived from Assumption \ref{asu:stochastic_gradient} and Assumptions \ref{asu:LossGradient_LCcondition} and \ref{asu:stochastic_gradient_NonConvex} under non-convex conditions.}

% \textcolor{black}{
% \begin{theorem}
% \label{thm:4}
% Suppose that Assumptions (\ref{asu:stochastic_gradient},\ref{asu:LossGradient_LCcondition} and \ref{asu:stochastic_gradient_NonConvex}) hold and the stepsize $\eta$ is fixed and satisfies
% \begin{equation}\label{eq:assumptioninThm4}
% 0<\eta\leq \frac{2\mu_H}{ \mathbf{L_G}\left(M_V(1+s(N_r,N_b))+\mu^2_G\right)},
% \end{equation}
% then we have
% \begin{equation}\label{eq:optim_gap_thm4}
%     \lim_{i \to \infty} \mathbb{E}\left(\Vert \nabla \mathcal{L}_{N}(\bx;\theta_i) \Vert_2^2\right) =   \frac{\eta \mathbf{L_G}C_V}{2\mu_H} s(N_r,N_b).
% \end{equation}
% \end{theorem}
% \begin{proof}
% See Appendix B.
% \end{proof}
% }

% \textcolor{black}{
% \begin{remark}
% \label{rem:mk_thmnonconvex}
% When $N_r$ is large, the $s(N_r,N_b)$ obtained via GLP sampling is smaller than that via UR sampling. Thus, in the non-convex scenario, according to Theorem \ref{thm:4}, as the number of training iterations increases, GLP sampling gives a smaller expectation of squared norm of the loss function's gradient, which means the stochastic gradient descent method can converge more quickly.
% \end{remark}
% }

\begin{theorem}
\label{thm:3}
Suppose Assumption \ref{asu:Normrelations} holds, let $\bu^* \in \mathbb{L_2}(\Omega) $ be the exact solution of Eq \eqref{eq:gen_PDE}. If $r(\bx;\theta) = \mathcal{A}[\bu(\bx;\theta)] - f(\bx)  $  and $b(\bx;\theta) =\mathcal{B}[\bu(\bx;\theta)] - g(\bx)$ have continuous  partial derivatives, the following error estimates hold.\\
(i) When $\left\{ \bx_i \right\}_{i=1}^{N_r}$ is a good lattice point set,
\begin{equation}\label{eq:errorestimate_GLP}
    \Vert \bu^*(\bx)-\bu(\bx;\theta) \Vert_{2} \leq  \frac{\sqrt{2}}{C_1} \left(\mathcal{L}_{r,N_r}^{GLP} + \mathcal{L}_{b,N_b}^{UR}
      + \bO\left(\frac { (\log N_r)^{d} } {{N_r}}\right) +\bO\left(N_b^{-\frac{1}{2}}\right) \right)^{\frac{1}{2}}.
\end{equation}
(ii) When $\left\{ \bx_i \right\}_{i=1}^{N_r}$ is a uniform random point set,
\begin{equation}\label{eq:errorestimate_UR}
    \Vert \bu^*(\bx)-\bu(\bx;\theta) \Vert_{2} \leq  \frac{\sqrt{2}}{C_1} \left(\mathcal{L}_{r,N_r}^{UR} + \mathcal{L}_{b,N_b}^{UR}
      + \bO\left(N_r^{-\frac{1}{2}}\right) +\bO\left(N_b^{-\frac{1}{2}}\right) \right)^{\frac{1}{2}}.
\end{equation}

\end{theorem}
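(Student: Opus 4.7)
The plan is to combine the stability bound from Assumption \ref{asu:Normrelations} with the quadrature error estimates in Lemma \ref{lem:MC_Discrepancy}. The strategy has two essentially independent pieces: a deterministic PDE-stability argument that reduces $\|\bu^*-\bu(\cdot;\theta)\|_2$ to the continuous residual and boundary $L_2$ norms, and a quadrature argument that bounds those continuous norms by the empirical losses actually minimized during training. Parts (i) and (ii) differ only in which quadrature bound is invoked for the residual integral.

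First I would exploit linearity of $\mathcal{A},\mathcal{B}$ and the fact that $\bu^*$ solves Eq.\eqref{eq:gen_PDE} to write
\begin{equation*}
\mathcal{A}[\bu^*(\bx)-\bu(\bx;\theta)] = -r(\bx;\theta), \qquad \mathcal{B}[\bu^*(\bx)-\bu(\bx;\theta)] = -b(\bx;\theta).
\end{equation*}
Applying the left inequality in Assumption \ref{asu:Normrelations} to $\bu^*-\bu(\cdot;\theta)\in \mathbb{L}_2(\Omega)$ yields
\begin{equation*}
C_1 \Vert \bu^*(\bx)-\bu(\bx;\theta)\Vert_2 \;\leq\; \Vert r(\bx;\theta)\Vert_{2,\Omega} + \Vert b(\bx;\theta)\Vert_{2,\partial\Omega}.
\end{equation*}
Squaring and using $(a+b)^2\leq 2(a^2+b^2)$ converts this sum into the continuous loss quantities introduced in Eq.\eqref{eq:L2_loss},
\begin{equation*}
C_1^2 \Vert \bu^*-\bu(\cdot;\theta)\Vert_2^2 \;\leq\; 2\bigl(\mathcal{L}_r(\bx;\theta)+\mathcal{L}_b(\bx;\theta)\bigr).
\end{equation*}

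Next I would replace the continuous losses by their empirical counterparts. Under the smoothness comment in Remark \ref{rem:mk_lemma1} (the predicted solution inherits continuous mixed partials from a smooth activation, so $r$ and $b$ have bounded variation in the Hardy--Krause sense), Lemma \ref{lem:MC_Discrepancy}(ii) applied to the GLP set $\left\{\bx_i\right\}_{i=1}^{N_r}\subset\Omega$ gives
\begin{equation*}
\mathcal{L}_r(\bx;\theta)\;\leq\; \mathcal{L}_{r,N_r}^{GLP}(\bx;\theta) + \bO\!\left(\frac{(\log N_r)^d}{N_r}\right),
\end{equation*}
while Lemma \ref{lem:MC_Discrepancy}(i) applied to the uniform random boundary sample gives $\mathcal{L}_b\leq \mathcal{L}_{b,N_b}^{UR}+\bO(N_b^{-1/2})$. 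Summing, dividing by $C_1^2$, and taking square roots yields exactly Eq.\eqref{eq:errorestimate_GLP}. For part (ii) I would substitute Lemma \ref{lem:MC_Discrepancy}(i) for the residual integral as well, producing Eq.\eqref{eq:errorestimate_UR}.

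I do not anticipate a serious obstacle; the whole argument is a concatenation of one inequality from Assumption \ref{asu:Normrelations}, the elementary $(a+b)^2\leq 2(a^2+b^2)$, and two invocations of Lemma \ref{lem:MC_Discrepancy}. The one place that deserves care is the Hardy--Krause bounded-variation hypothesis needed for the GLP bound: I would explicitly cite Remark \ref{rem:mk_lemma1}(2) to justify that the integrands $r(\cdot;\theta)^2$ and $b(\cdot;\theta)^2$ satisfy it whenever the network uses a smooth activation such as $\tanh$ or Sigmoid. A minor stylistic matter is whether to keep the boundary quadrature inside the discrepancy error or absorb it into a single asymptotic symbol; since the theorem statement separates the two, I would keep the bookkeeping explicit. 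Corollary \ref{cor:1} would then follow immediately by comparing the asymptotic rates $(\log N_r)^d/N_r$ versus $N_r^{-1/2}$ for $N_r$ large enough.
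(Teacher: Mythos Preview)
Your proposal is correct and follows essentially the same route as the paper: apply the left inequality of Assumption~\ref{asu:Normrelations} to $\bu^*-\bu(\cdot;\theta)$ using linearity of $\mathcal{A},\mathcal{B}$, convert $\|r\|_2+\|b\|_2$ into $\sqrt{2}(\|r\|_2^2+\|b\|_2^2)^{1/2}$, and then invoke Lemma~\ref{lem:MC_Discrepancy} to replace the continuous losses by their empirical counterparts plus the appropriate quadrature remainders. The only cosmetic difference is that the paper writes the Cauchy--Schwarz-type step as $a+b\leq\sqrt{2}(a^2+b^2)^{1/2}$ directly rather than squaring first, and it does not explicitly cite Remark~\ref{rem:mk_lemma1} for the Hardy--Krause hypothesis (it relies instead on the continuous-partial-derivatives assumption in the theorem statement).
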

\begin{proof}
See Appendix A.

\end{proof}

\begin{corollary}\label{cor:1}
    Suppose that Theorem \ref{thm:1} and Theorem \ref{thm:3} hold, then we have\\
    (i) If $\left\{ \bx_k \right\}_{k=1}^{N_r}$ is a good lattice point set,
    \begin{equation}\label{eq:errorestimate_cor_GLP}
    \begin{aligned}
      \lim_{i \to \infty}\mathbb{E}\left(\Vert \bu^*(\bx)-\bu(\bx;\theta_i) \Vert_{2}^2\right)
     \leq \frac{2}{{C_1}^2} \left(\frac{\eta \mathbf{L}C_V}{2c\mu} \left( \bO\left(\frac { (\log N_r)^{d} } {{N_r}}\right) +\bO\left(N_b^{-\frac{1}{2}}\right)\right)^2 + \bO\left(\frac { (\log N_r)^{d} } {{N_r}}\right) +\bO\left(N_b^{-\frac{1}{2}}\right) \right) .
     \end{aligned} 
\end{equation}
    (ii) If $\left\{ \bx_k \right\}_{k=1}^{N_r}$ is a uniform random  point set,
    \begin{equation}\label{eq:errorestimate_cor_UR}
    \begin{aligned}
      \lim_{i \to \infty}\mathbb{E}\left(\Vert \bu^*(\bx)-\bu(\bx;\theta_i) \Vert_{2}^2\right)
     \leq \frac{2}{{C_1}^2} \left(\frac{\eta \mathbf{L}C_V}{2c\mu} \left( \bO\left(N_r^{-\frac{1}{2}}\right) +\bO\left(N_b^{-\frac{1}{2}}\right)\right)^2 + \bO\left(N_r^{-\frac{1}{2}}\right) +\bO\left(N_b^{-\frac{1}{2}}\right) \right) .
     \end{aligned} 
     \end{equation}
\end{corollary}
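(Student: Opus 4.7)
The plan is to combine Theorem~\ref{thm:1} (the asymptotic expected loss) with Theorem~\ref{thm:3} (the pointwise $L_2$ error in terms of the empirical loss plus a quadrature remainder) by squaring, taking expectation, and sending $i\to\infty$.

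First I would observe that, since $\alpha_1=\alpha_2=1$ is fixed after Algorithm~\ref{alg:NTM-PINN}, we have $\mathcal{L}_{r,N_r}^{GLP}(\bx;\theta)+\mathcal{L}_{b,N_b}^{UR}(\bx;\theta)=\mathcal{L}_N(\bx;\theta)$. Applying Theorem~\ref{thm:3}(i) with $\theta=\theta_i$ and squaring both sides gives
\begin{equation*}
\|\bu^*(\bx)-\bu(\bx;\theta_i)\|_2^2\;\leq\;\frac{2}{C_1^2}\left(\mathcal{L}_N(\bx;\theta_i)+\bO\!\left(\frac{(\log N_r)^d}{N_r}\right)+\bO\!\left(N_b^{-\frac{1}{2}}\right)\right),
\end{equation*}
which holds deterministically for every realization of $\theta_i$.

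Next I would take expectation with respect to the stochastic-gradient randomness (and the random boundary sample) and then send $i\to\infty$. The two $\bO$ remainders depend only on $N_r$ and $N_b$, not on $\theta_i$, so they pass through both operations unchanged, yielding
\begin{equation*}
\lim_{i\to\infty}\mathbb{E}\|\bu^*(\bx)-\bu(\bx;\theta_i)\|_2^2\;\leq\;\frac{2}{C_1^2}\left(\lim_{i\to\infty}\mathbb{E}\,\mathcal{L}_N(\bx;\theta_i)+\bO\!\left(\frac{(\log N_r)^d}{N_r}\right)+\bO\!\left(N_b^{-\frac{1}{2}}\right)\right).
\end{equation*}
Invoking Theorem~\ref{thm:1} to replace the loss limit by $\frac{\eta\mathbf{L}C_V}{2c\mu}\,s(N_r,N_b)$, and using the GLP-case identity $s(N_r,N_b)=\bigl(\bO((\log N_r)^d/N_r)+\bO(N_b^{-1/2})\bigr)^2$ recorded after Assumption~\ref{asu:stochastic_gradient}, one recovers (i) directly. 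Part~(ii) is obtained by the same three steps, this time starting from Theorem~\ref{thm:3}(ii) and substituting the uniform-random form $s(N_r,N_b)=\bigl(\bO(N_r^{-1/2})+\bO(N_b^{-1/2})\bigr)^2$.

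The main obstacle I anticipate is the limit-exchange in the second step: Theorem~\ref{thm:3} provides a deterministic pointwise bound while Theorem~\ref{thm:1} only gives a limit of an expectation, so one must argue that the inequality survives both $\mathbb{E}$ and $\lim$. Because the bound is non-negative, the remainders are constant in $i$, and Theorem~\ref{thm:1} supplies an exact limit (not merely a bound), monotonicity of expectation and elementary manipulation of $\limsup$ suffice, and the substitution of $s(N_r,N_b)$ in each case is pure bookkeeping that reproduces the right-hand sides of \eqref{eq:errorestimate_cor_GLP} and \eqref{eq:errorestimate_cor_UR}.
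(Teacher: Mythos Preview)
Your proposal is correct and follows essentially the same route as the paper: square Theorem~\ref{thm:3}, take expectation, then insert the limit $\lim_{i\to\infty}\mathbb{E}\,\mathcal{L}_N(\bx;\theta_i)=\frac{\eta\mathbf{L}C_V}{2c\mu}s(N_r,N_b)$ from Theorem~\ref{thm:1} with the appropriate form of $s(N_r,N_b)$. Your explicit remark that the $\bO$-remainders are $i$-independent (hence survive expectation and the limit) is a point the paper leaves implicit.
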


\begin{proof}
See Appendix A.

\end{proof}

\begin{remark}
\label{rem:mk_cor2}
It is a well known that $\bO\left(\frac { (\log N_r)^{d} } {{N_r}}\right) < \bO\left(N_r^{-\frac{1}{2}}\right)$ when $N_r$ is large. 
That is, in Corollary 1, the upper bound of $\lim_{i \to \infty}\mathbb{E}\left(\Vert \bu^*(\bx)-\bu(\bx;\theta_i) \Vert_{2}^2\right)$ is smaller if $\left\{ \bx_i \right\}_{i=1}^{N_r}$  is a good lattice point set when $N_r$ is large . From this point of view, the effectiveness of the good lattice point set is validated.

\textcolor{black}{
Moreover, it is important to note that in practical numerical computations, increasing the number of points $N_r$  significantly raises computational resource demands. To check if the logarithmic term $(\log N_r)^{d}$ weakens GLP sampling's advantage under such conditions, we designed comparative experiments related to  $N_r$ in the subsequent examples in five and eight dimensions (Tables (\ref{tab:PoissonHd_PointsSettings}, 
 \ref{tab:Poisson8d_PointsSettings},
 \ref{tab:NonlinearHd_PointsSettings} ,
 \ref{tab:Nonlinear8d_PointsSettings})). Numerical results confirm that GLP sampling outperforms uniform random sampling.
However, when $N_r$ is limited and the dimension d is very large, the logarithmic term $(\log N_r)^{d}$ may diminish the advantages of our method.}
\end{remark}

% \textcolor{black}{
% \begin{remark}
% \label{rem:mk_cor1}
% However, the above corollary is drawn under the assumptions of convexity and linear differential operators. Regarding non-convexity, although we present the impact of good lattice point sets on optimization in Theorem \ref{thm:4}, more discussions are needed for error analysis, which will be our future work. As for nonlinear operators, numerical experiments in Section \ref{sec:Numericalsurvey} demonstrate the superiority and effectiveness of our method.
% \end{remark}
% }

% @@@@@@@@@@@@@@@@@@@@@@@@@@

\section{Numerical Experiments}
\label{sec:Numericalsurvey}

% In this section, numerical experiments are presented to verify our method. A  two-dimensional Poisson equation with low regularity, which has one peak is discussed in Section \ref{sec:OnePeak_2D}. We compare our method with other methods and show the performance of MS-PINN with iterative MMPDE-Net. 
% In Section \ref{sec:TwoPeaks_2D}, numerical solutions of two-dimensional Poisson equation with two peaks are presented.
% Numerical results in Section \ref{sec:Burgers_1D} show that our algorithm is effective for solving Burgers equation, whose solution has large gradient, including both forward (Section\ref{sec:Burgers_1D_Forward}) and inverse (Section \ref{sec:Burgers_1D_Inverse}) problems. The last example in Section \ref{sec:Burgers_2D} is the two-dimensional Burgers problem, which has shock wave solution in finite time.
%the one-dimensional Burgers equation is used as the third example, and we would like to observe whether our algorithm is still effective in regions of sharp gradients.

\subsection{Symbols and parameter settings}
\label{sec:Symbols and parameter settings}
%In this section, we introduce some notation and discuss how to measure the output of neural networks.
The $u^*$ is the exact solution,
% We use $u^*$ to denote the true solution.
%and $u^{\text{Ref}}$ to denote the true and reference solution.
 $u^{\text{UR}}$ and $u^{\text{GLP}}$ denote the approximate solutions using uniform random sampling and good lattice point set, respectively.
%Taking the true solution (or the reference solution) as the goal, we define the following relative error in infinite norm as the error measure (Eq.\ref{eq:measure}) in this paper.
The relative errors in the test set $\left\{\bx_i\right\}_{i=1}^{M_t}$ sampled by the uniform random sampling method are defined as follows
\begin{equation}
    \label{eq:measure}
    \hspace{-0.3cm}
    \begin{array}{r@{}l}
        \begin{aligned}
            e_\infty(u^{\text{UR}}) & = \frac{\max \limits_{1 \leq i \leq M_t}\vert u^*(x_i) -u^{\text{UR}}(x_i;\theta)\vert}{\max \limits_{1 \leq i \leq M_t}\vert u^*(x_i) \vert},\\
            e_2(u^{\text{UR}}) & 
            = \frac{\sqrt{\sum_{i=1}^{M_t}\vert u^*(x_i) -u^{\text{UR}}(x_i;\theta)\vert^2}}{\sqrt{\sum_{i=1}^{M_t}\vert u^*(x_i)\vert^2}} .
        \end{aligned}
    \end{array}
\end{equation}
%\textcolor{black}{When calculating the relative errors Eq \eqref{eq:measure} in the code, we use the \textbf{numpy.linalg.norm} function; and when doing random uniform sampling, we use the \textbf{numpy.random} function.}
In order to reduce the effect of randomness on the calculation results, we set all the seeds of the random numbers to 100. \textcolor{black}{Regarding performance, efficiency, and the assumptions about stochastic gradient descent in the proof, this paper adopts the general no-batch training method as in \cite{PINN}.}
The default parameter settings in PINNs are given in Table \ref{tab:parameter-PINN}. 
% For Poisson equation and forward Burgers equation, the Adam method is used to do the optimization. For the inverse Burgers equation and two dimensional Poisson equation, the Adam method is implemented firstly and then the LBFGS method is used to optimize the loss function. Since there is three stages in MS-PINN, we have given the training epochs in two stages in Table \ref{tab:parameter-PINN}, while the training epochs in MMPDE-Net is specified in each case. 
All numerical simulations are carried on NVIDIA A100 with 80GB of memory and NVIDIA A800 with 80GB of memory.

 %It is worth noting that there is no doubt that we would have achieved much nicer results if we had adjusted the parameters of MMPDE-Net separately for each of the examples in the later sections. However, in order to demonstrate the effectiveness of our algorithm and to minimize the impact of the parameters, we did not do so.  
 %MMPDE-Net in all the examples uses the parameters in Table \ref{tab:parameter-PINN}. Note that, 

% \begin{table}
% 	\centering
% 	\caption{Key parameter settings in MS-PINN.}
% 	\label{tab:parameter-PINN} 
% 	\setlength{\tabcolsep}{3.mm}{
% 		\begin{tabular}{|c|c|c|c|c|c|c|c|}%{p{1cm}p{2.2cm}p{1.2cm}p{1.2cm}p{1.2cm}p{1.2cm}p{1.2cm}p{1.6cm}}
% 			\hline\noalign{\smallskip}
% 			 Activation   &  Initialization  & Optimization& learning  &MMPDE-Net & Poisson & Burgers\\
% 			 function  & method   & method &  rate &Size & PINNs Size& PINNs Size\\
% 			\hline
% 			 tanh  & Xavier & Adam/LBFGS & 0.0001/1 &$20 \times 8$ &$40 \times 4$&$20 \times 8$\\ 
% 			\hline

% 		\end{tabular}
% 	}
% \end{table}

\begin{table}
	\centering
	\caption{Default settings for main parameters in PINNs.}
	\label{tab:parameter-PINN} 
	\setlength{\tabcolsep}{0.5mm}{
		\begin{tabular}{|c|c|c|c|c|c|c|c|}%{p{1cm}p{2.2cm}p{1.2cm}p{1.2cm}p{1.2cm}p{1.2cm}p{1.2cm}p{1.6cm}}
			\hline\noalign{\smallskip}
			 torch  & activation &  initialization  & optimization& learning& loss weights& net size& random \\
			 version&  function  &         method   &       method &  rate &($\alpha_1$ / $\alpha_2$)&(two-dimensional/high-dimensional)& seed \\
			\hline
			2.4.0 & tanh & Xavier & Adam/LBFGS & 0.0001/1& 1/1& $40 \times 4$ / $32 \times 8$&100\\
			% \hline
   %          MMPDE-Net &  Poisson & Burgers   & MMPDE-Net&Pre-     &  Formal \\
   %          Size      & PINNs Size& PINNs Size & Training&Training &  Training\\
   %          (neurons/layer  & (neurons/layer  & (neurons/layer & (epochs)&  (epochs) & (epochs)\\
   %          $\times$ hidden layers) & $\times$ hidden layers) & $\times$ hidden layers) &   &  &\\
   %          \hline
   %          $20 \times 8$  & $40 \times 4$&$20 \times 8$&20000 &20000 & 40000 \\
            \hline
		\end{tabular}
	}
\end{table}
 
% @@@@@@@@@@@@@@@@@@@@@@@@@@
\subsection{Two-dimensional Poisson equation}
\subsubsection{Two-dimensional Poisson equation with one peak}
\label{sec:OnePeak_2D}
For the following Poisson equation in two-dimension
\begin{equation}
	\label{eq:2d_Poisson}
	\hspace{-0.3cm}
	\begin{array}{r@{}l}
		\left\{
		\begin{aligned}
			 -\Delta u(x,y) & = f(x,y), \quad (x,y) \ \mbox{in} \ \Omega, \\
   %= [-1,1]^2, \\
                  u(x,y) & = g(x,y),  \quad  (x,y) \ \mbox{on} \  \partial \Omega,
%			& u(x,-1) = u(x,1) = e^{-1000(x^2+1)}, \quad  x \in [-1,1] \\ 
%			& u(-1,y) = u(1,y) = e^{-1000(1+y^2)}, \quad  y \in [-1,1]\\   
		\end{aligned}
		\right.
	\end{array}
\end{equation}
where $\Omega = (-1,1)^2$,  the exact solution which has a peak at $(0,0)$ is chosen as
\begin{equation}
    \label{eq:2d_Peaksolution}
    \hspace{-0.3cm}
    \begin{array}{r@{}l}
        \begin{aligned}
            u = e^{-1000(x^2+y^2)}.
        \end{aligned}
    \end{array}
\end{equation}
The Dirichlet boundary condition $g(x,y)$  and the source function $f(x,y)$ are given by Eq \eqref{eq:2d_Peaksolution}.

In order to compare our method with other five methods, we sample 10946 points in $\Omega$ and 1000 points on $\partial \Omega$ as the training set and $400\times400$ points as the test set for all the methods.
In the following experiments, PINNs are all trained 50000 epochs. %More detailed parameters can be found in Table \ref{tab:parameter-PINN}.
Six different uniform sampling methods are implemented: (1) uniform random sampling;
(2) LHS method; 
(3) Halton sequence; 
(4) Hammersley sequence; 
(5) Sobol sequence;
(6) GLP sampling. Methods (2)--(5) are implemented by the DeepXDE package \cite{lu2021deepxde}.

\begin{figure}[htbp]
\centering
\subfloat[sampling points]{\includegraphics[width = 0.27\textwidth]{./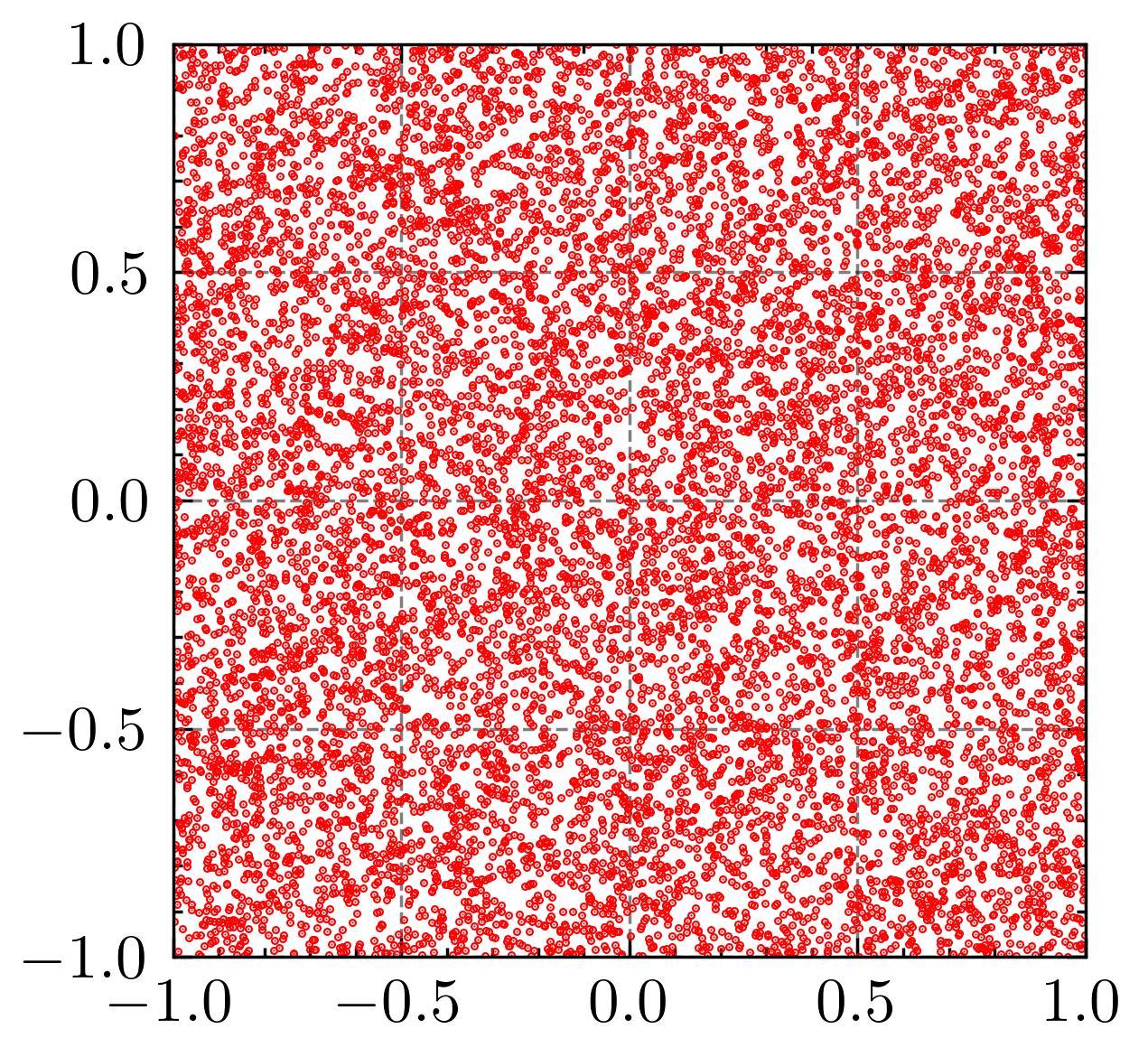}}
\subfloat[approximate solution]{\includegraphics[width = 0.31\textwidth]
{./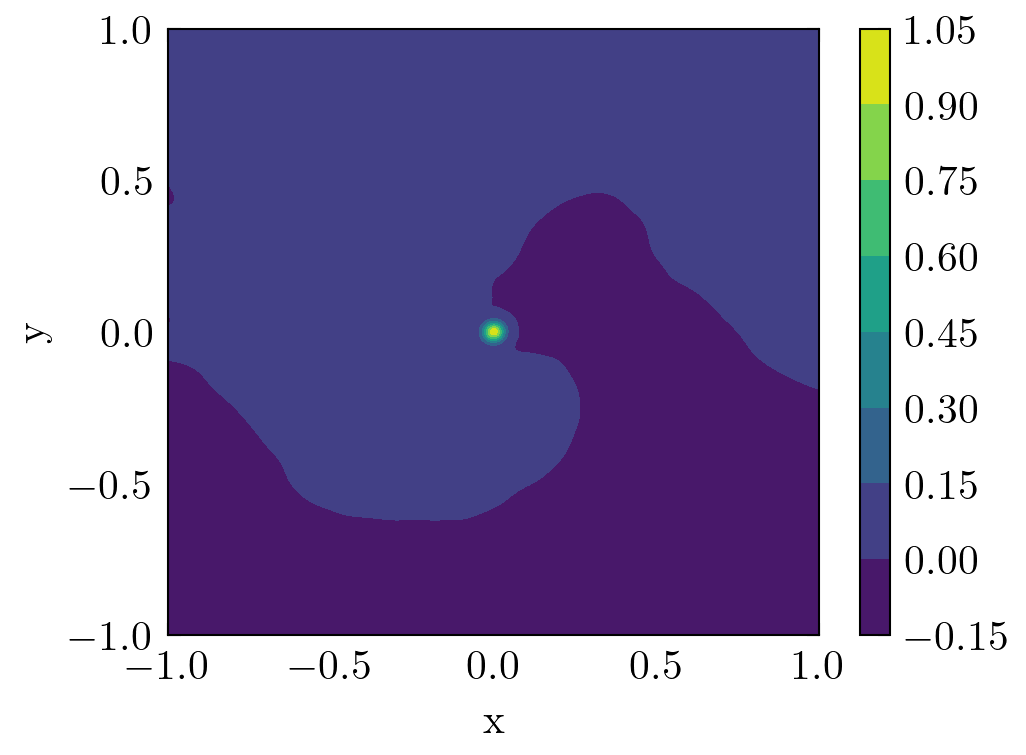}}
\subfloat[absolute error]{\includegraphics[width = 0.31\textwidth]{./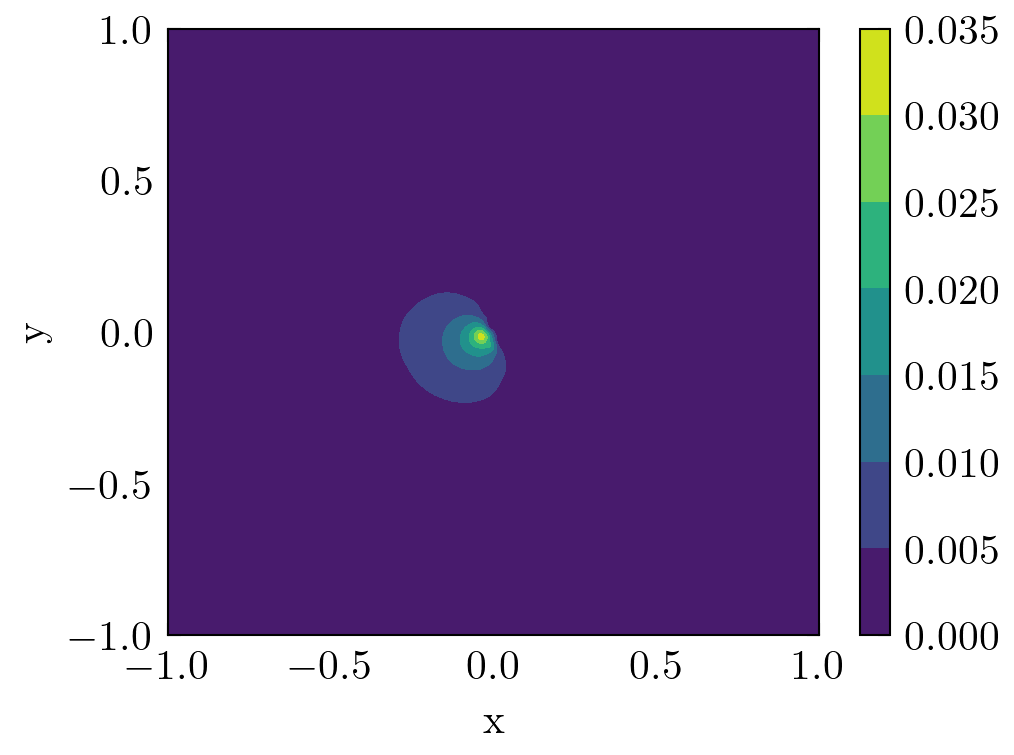}}
\caption{The result of uniform random sampling for the two-dimensional Poisson equation  with the solution Eq \eqref{eq:2d_Peaksolution}. (a) training points sampled by uniform random sampling method; (b) the approximate solution $u^{\text{UR}}$; (c) the absolute error $\vert u^* - u^{\text{UR}} \vert$.}
\label{fig:Peak2D_UniformRandom}
\end{figure}

\begin{figure}[htbp]
\centering
\subfloat[sampling points]{\includegraphics[width = 0.27\textwidth]{./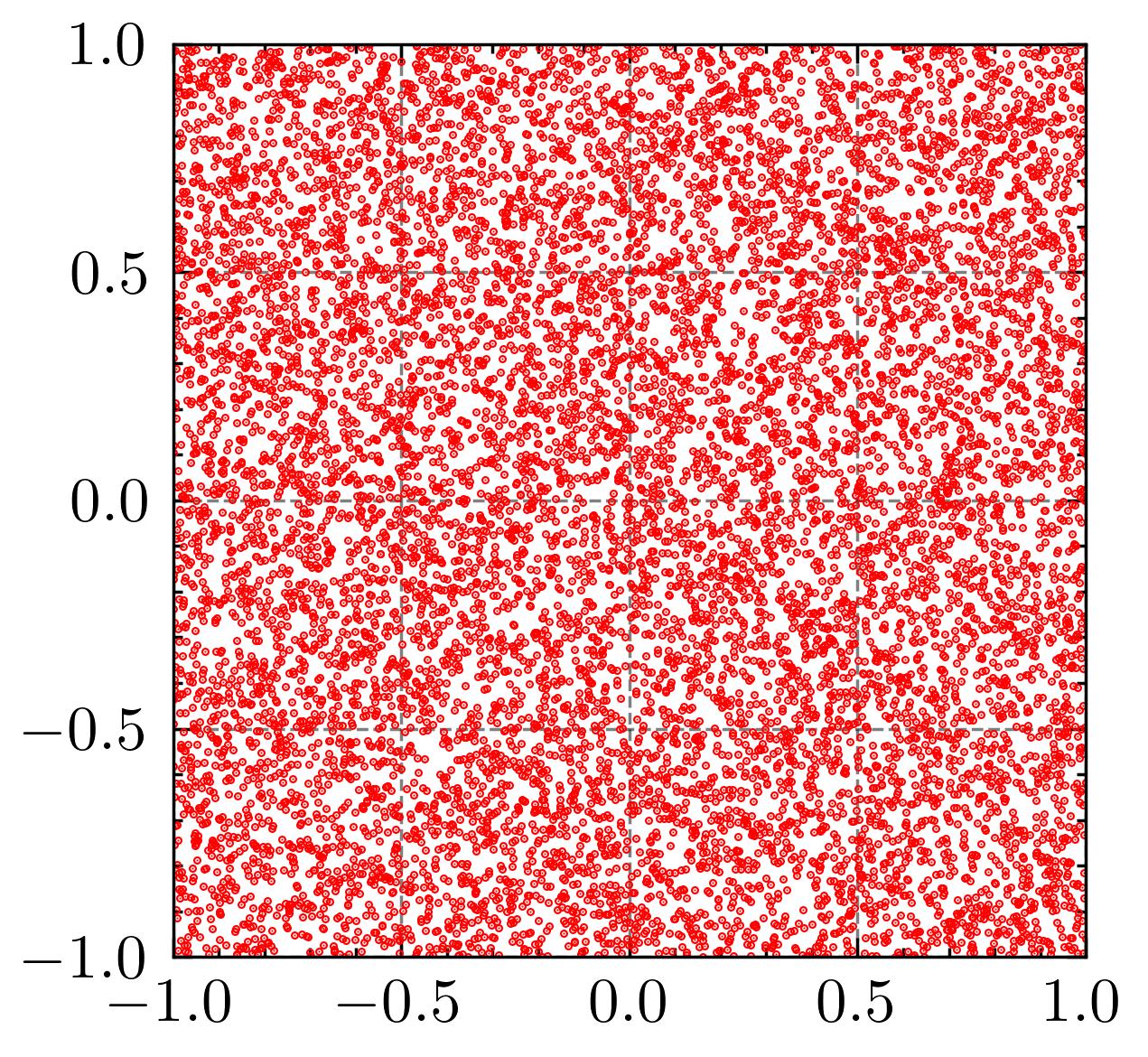}}
\subfloat[approximate solution]{\includegraphics[width = 0.31\textwidth]
{./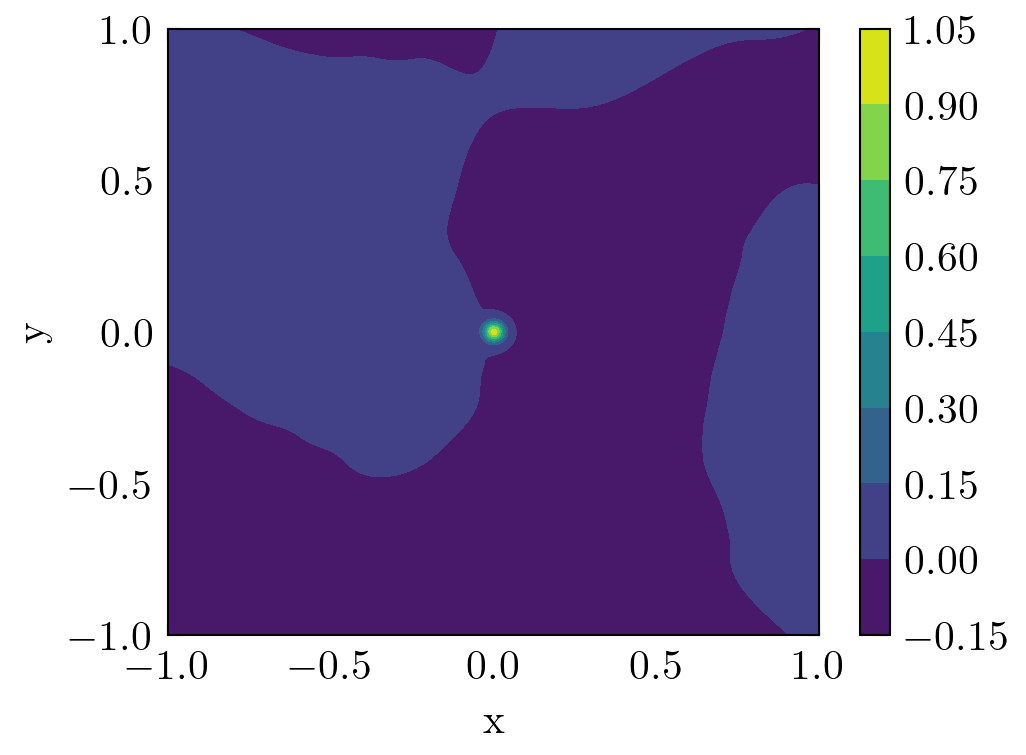}}
\subfloat[absolute error]{\includegraphics[width = 0.31\textwidth]{./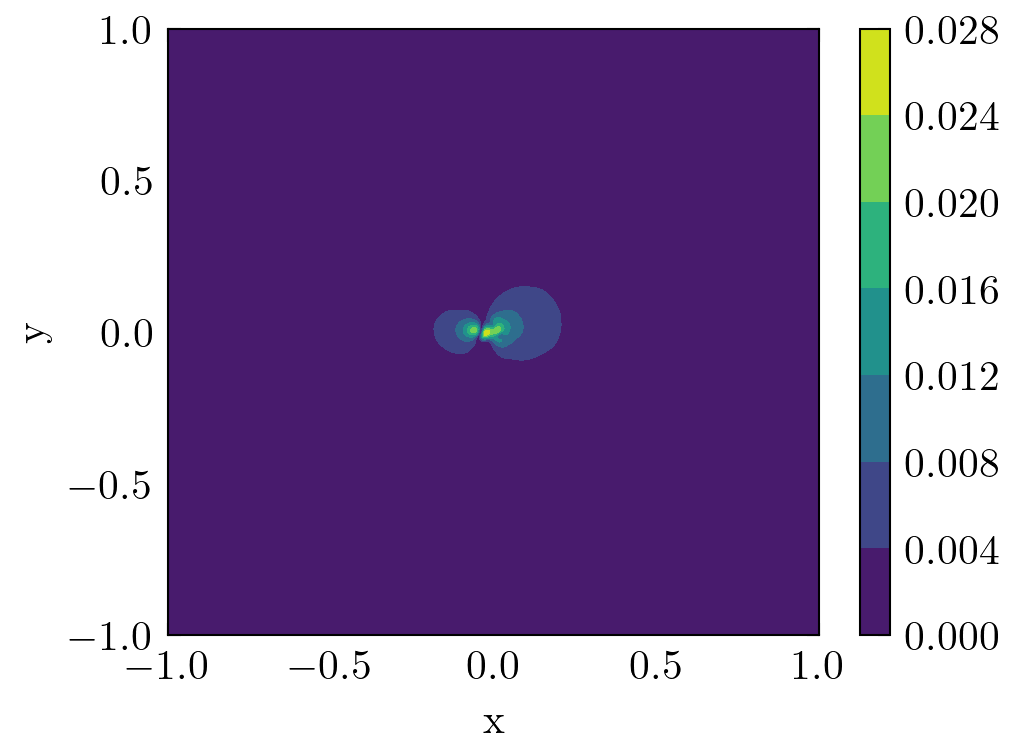}}
\caption{The result of LHS method for the two-dimensional Poisson equation  with the solution Eq \eqref{eq:2d_Peaksolution}. (a) training points sampled by LHS method; (b) the approximate solution $u^{\text{LHS}}$; (c) the absolute error $\vert u^* - u^{\text{LHS}} \vert$.}
\label{fig:Peak2D_LHS}
\end{figure}

\begin{figure}[htbp]
\centering
\subfloat[sampling points]{\includegraphics[width = 0.27\textwidth]{./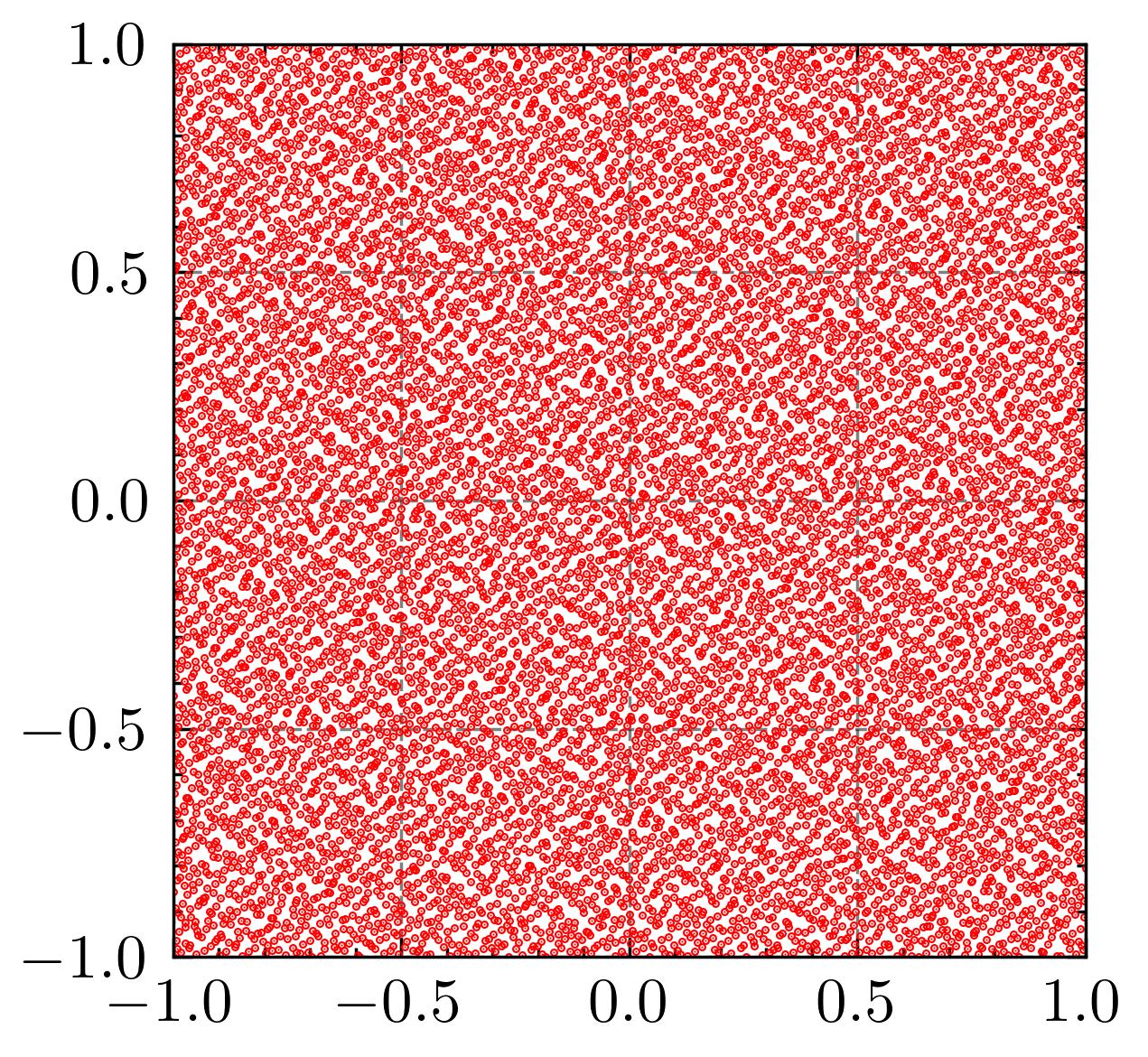}}
\subfloat[approximate solution]{\includegraphics[width = 0.31\textwidth]
{./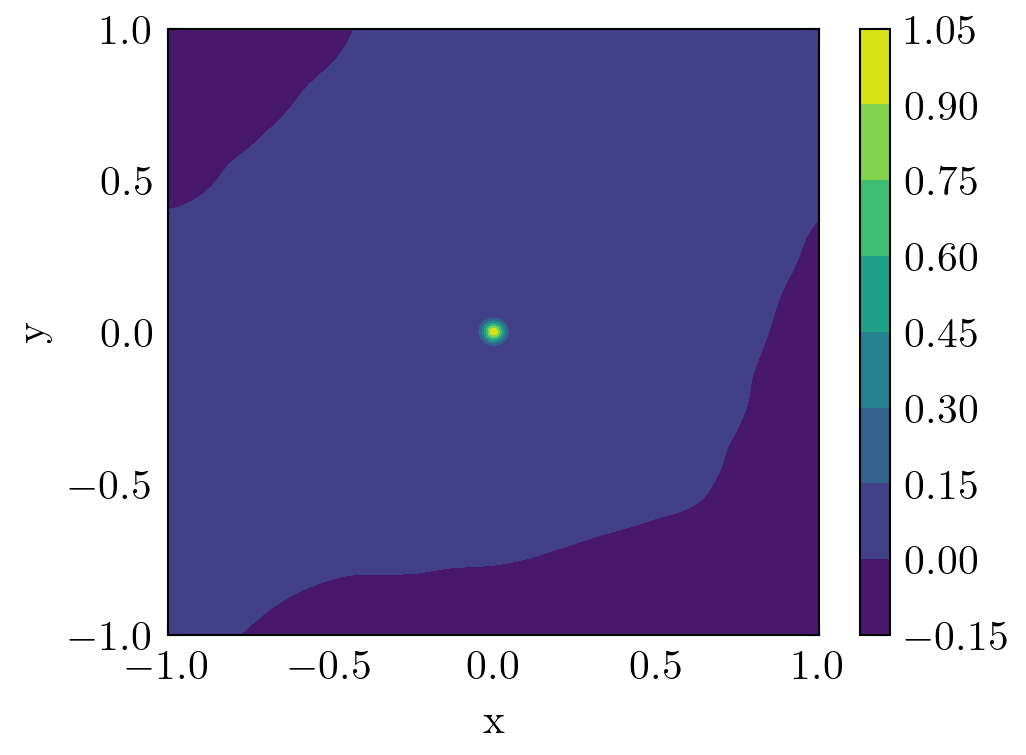}}
\subfloat[absolute error]{\includegraphics[width = 0.31\textwidth]{./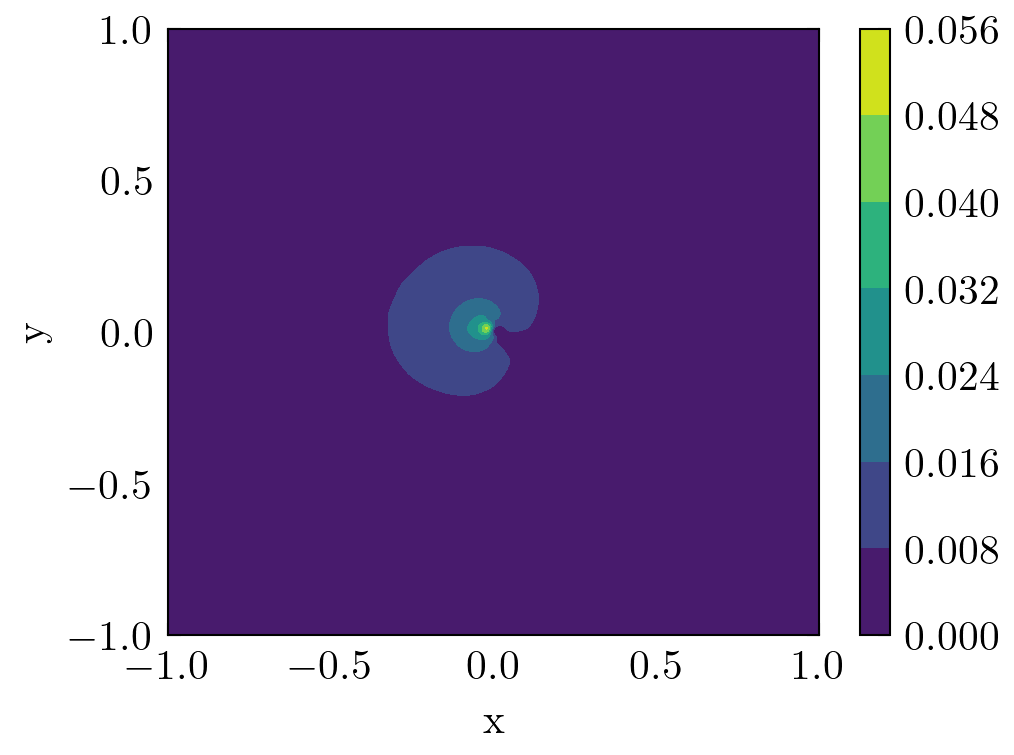}}
\caption{The result of Halton sequence for the two-dimensional Poisson equation  with the solution Eq \eqref{eq:2d_Peaksolution}. (a) training points sampled by Halton sequence; (b) the approximate solution $u^{\text{Halton}}$; (c) the absolute error $\vert u^* - u^{\text{Halton}} \vert$.}
\label{fig:Peak2D_Halton}
\end{figure}

\begin{figure}[htbp]
\centering
\subfloat[sampling points]{\includegraphics[width = 0.27\textwidth]{./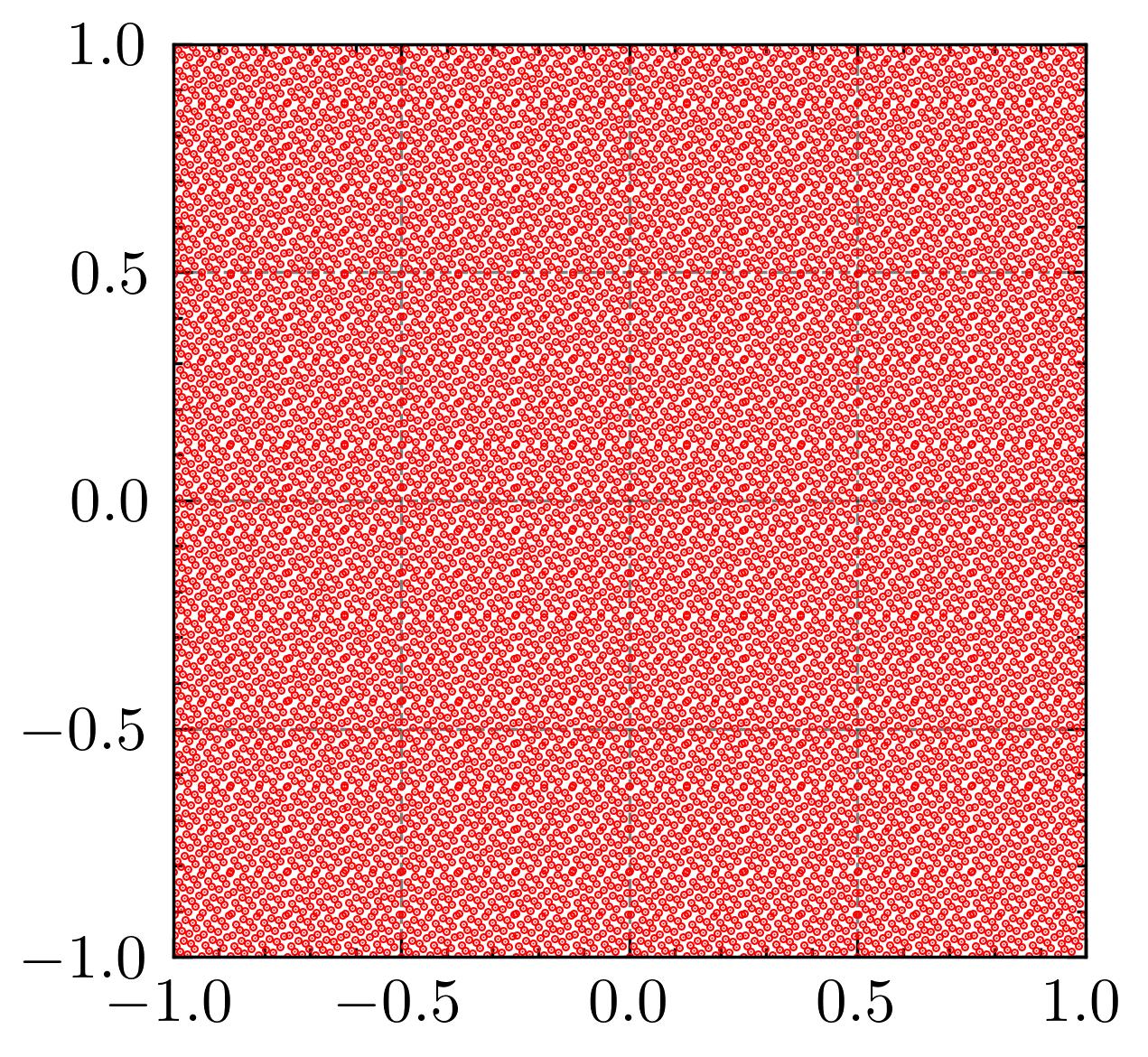}}
\subfloat[approximate solution]{\includegraphics[width = 0.31\textwidth]
{./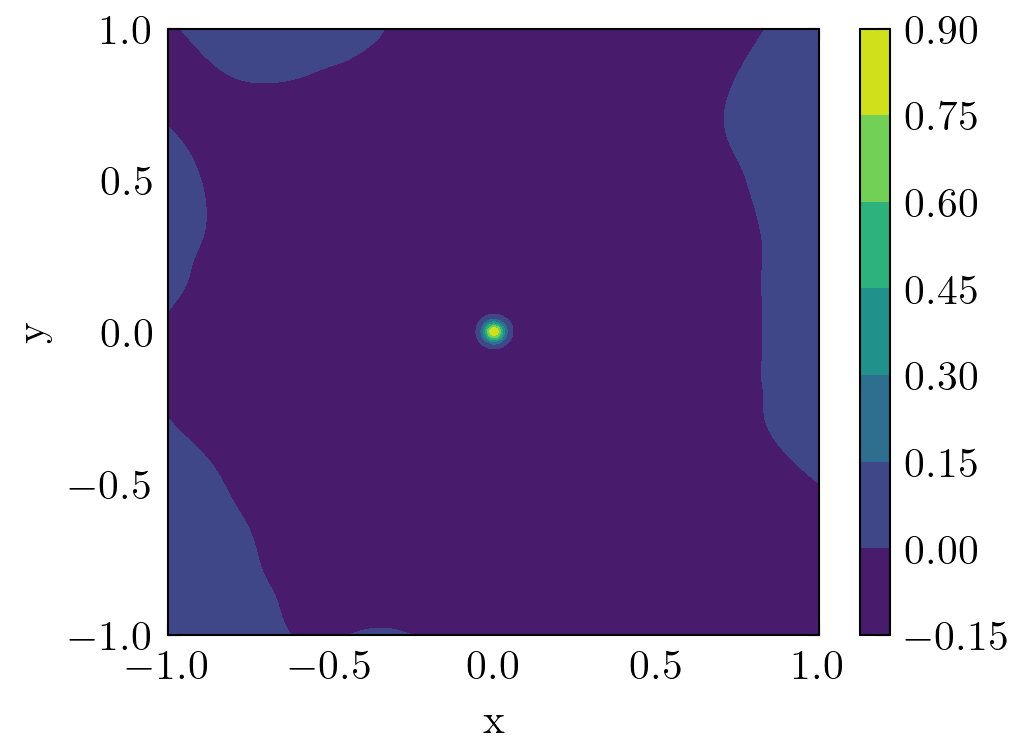}}
\subfloat[absolute error]{\includegraphics[width = 0.31\textwidth]{./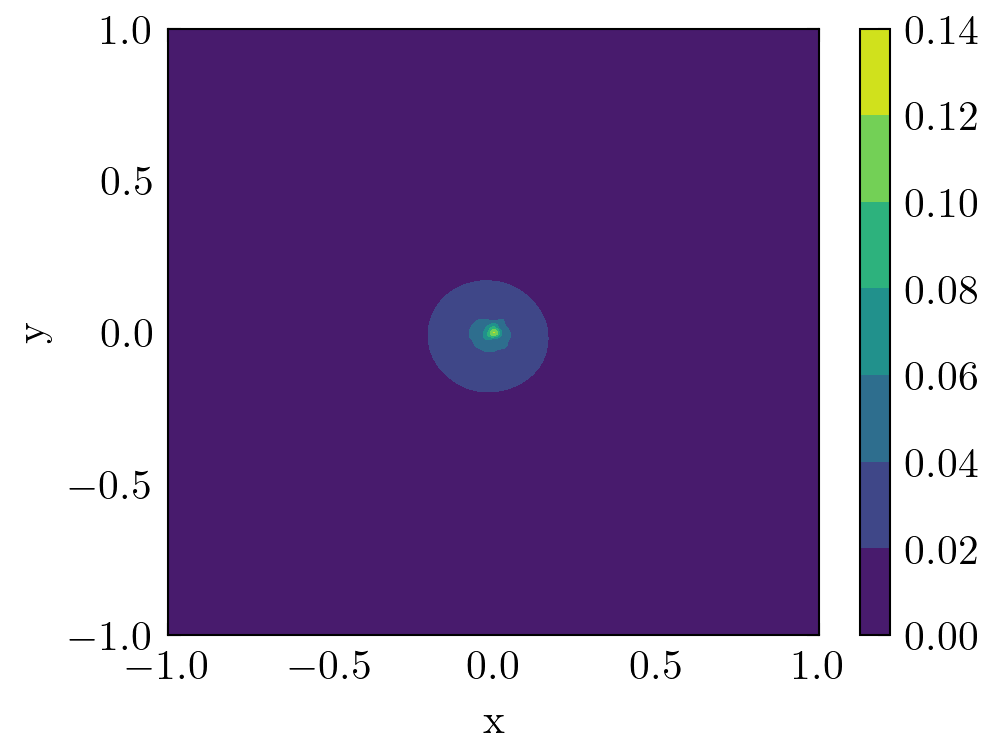}}
\caption{The result of Hammersley sequence for the two-dimensional Poisson equation  with the solution Eq \eqref{eq:2d_Peaksolution}. (a) training points sampled by Hammersley sequence; (b) the approximate solution $u^{\text{Hammersley}}$; (c) the absolute error $\vert u^* - u^{\text{Hammersley}} \vert$.}
\label{fig:Peak2D_Hammersley}
\end{figure}

\begin{figure}[htbp]
\centering
\subfloat[Sampling points]{\includegraphics[width = 0.27\textwidth]{./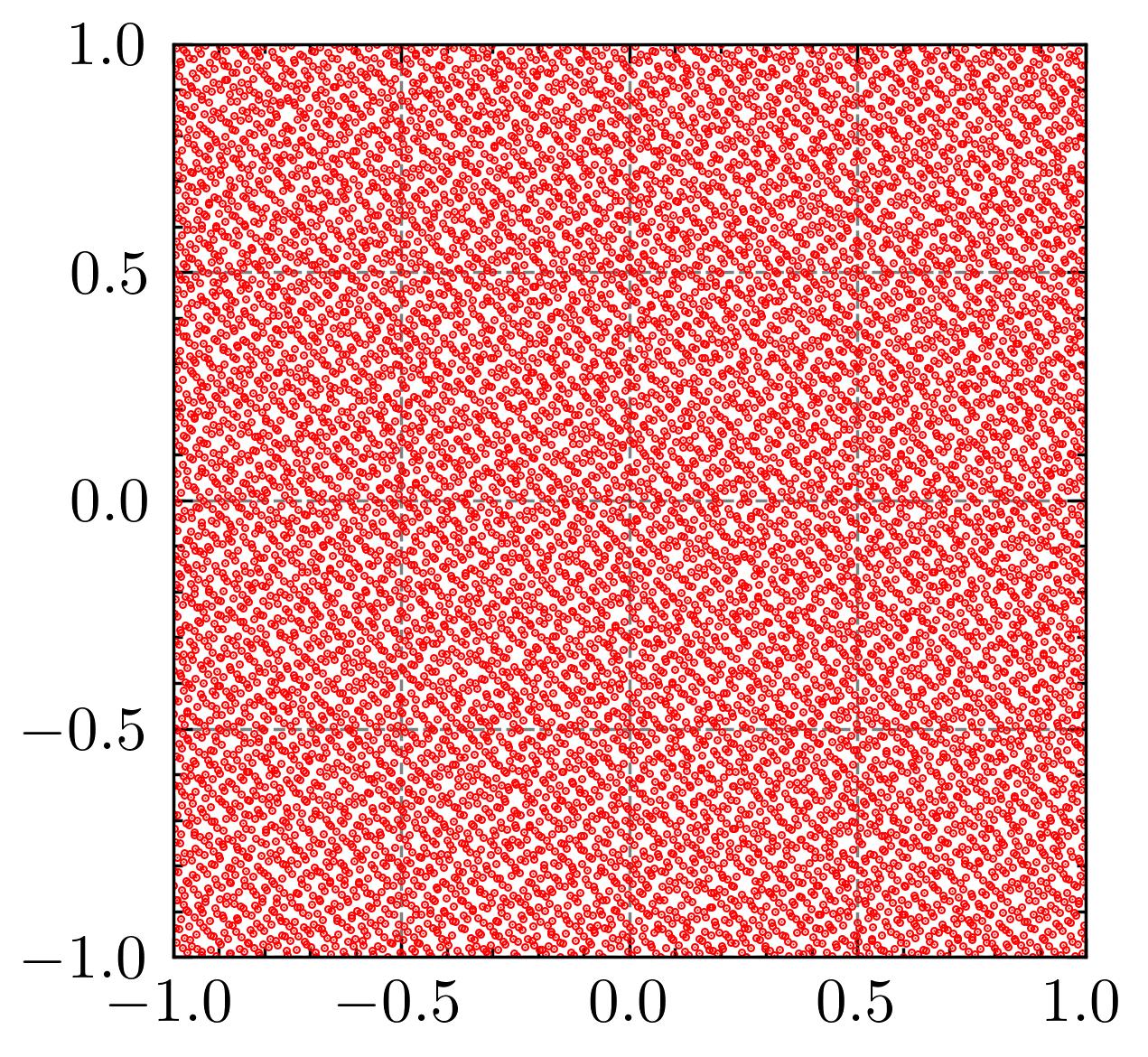}}
\subfloat[approximate solution]{\includegraphics[width = 0.31\textwidth]
{./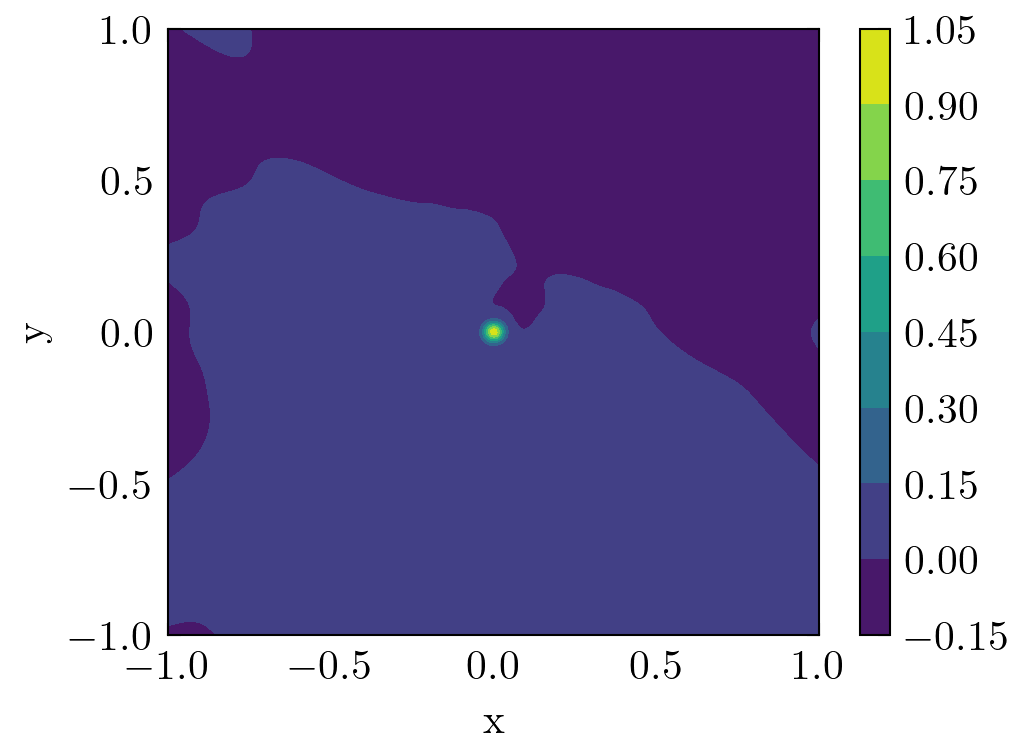}}
\subfloat[absolute error]{\includegraphics[width = 0.31\textwidth]{./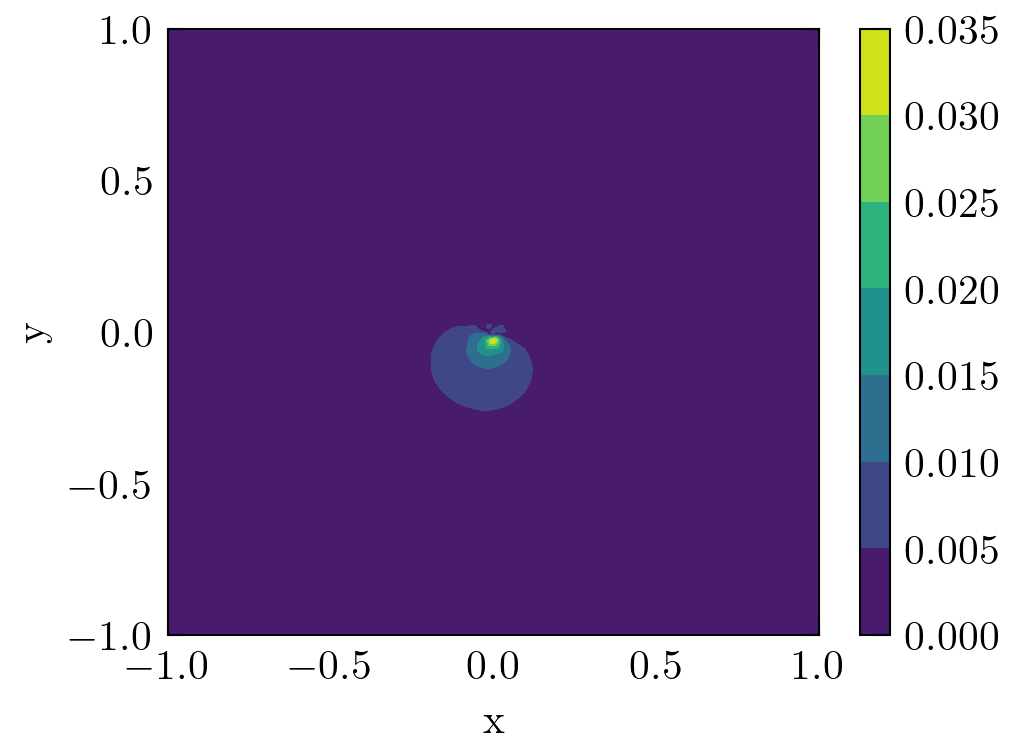}}
\caption{The result of Sobol sequence for the two-dimensional Poisson equation  with the solution Eq \eqref{eq:2d_Peaksolution}. (a) training points sampled by Sobol sequence; (b) the approximate solution $u^{\text{Sobol}}$; (c) the absolute error $\vert u^* - u^{\text{Sobol}} \vert$.}
\label{fig:Peak2D_Sobol}
\end{figure}

\begin{figure}[htbp]
\centering
\subfloat[Sampling points]{\includegraphics[width = 0.27\textwidth]{./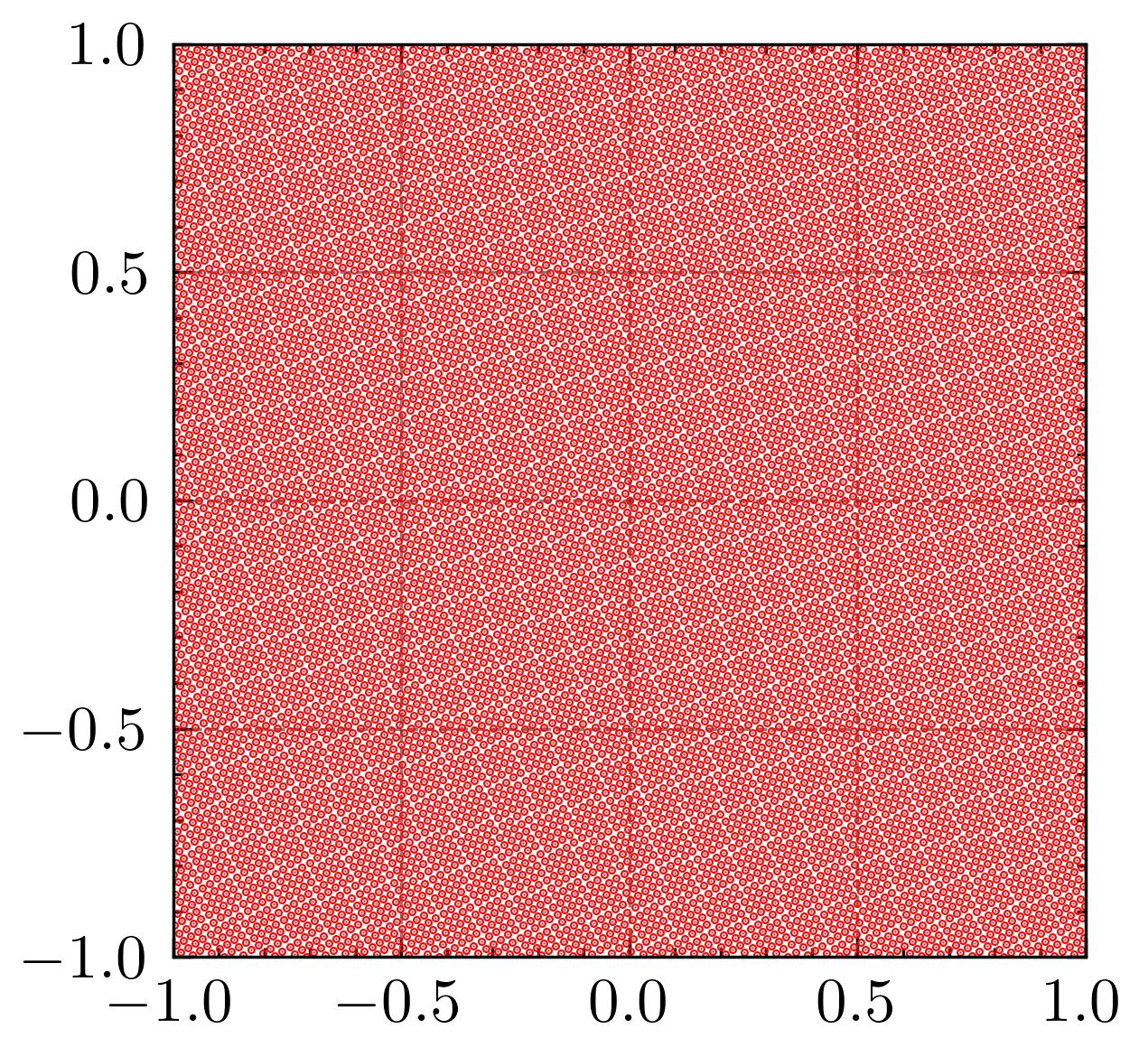}}
\subfloat[approximate solution]{\includegraphics[width = 0.31\textwidth]
{./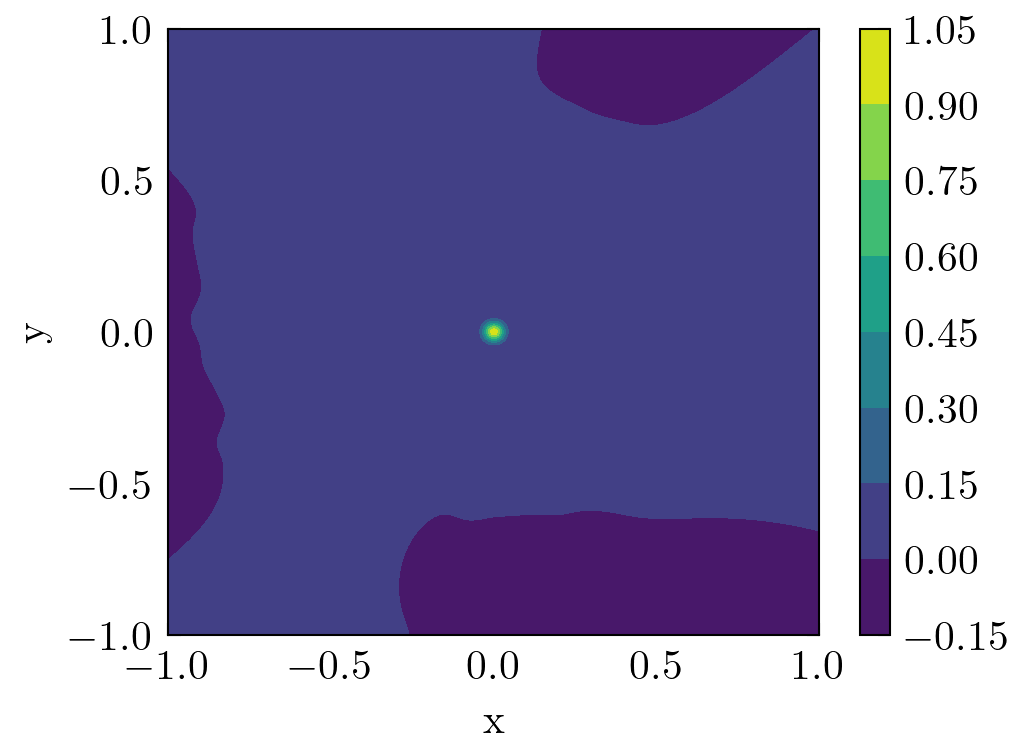}}
\subfloat[absolute error]{\includegraphics[width = 0.31\textwidth]{./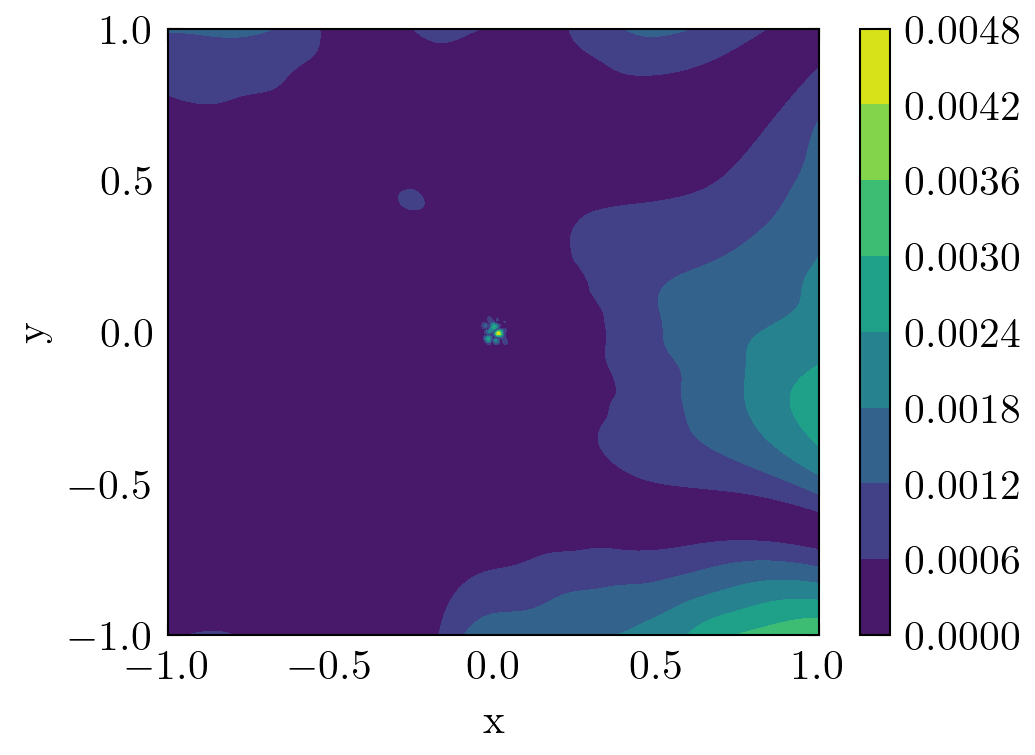}}
\caption{The result of GLP sampling  for the two-dimensional Poisson equation  with the solution Eq \eqref{eq:2d_Peaksolution}. (a) training points sampled by GLP sampling; (b) the approximate solution $u^{\text{GLP}}$; (c) the absolute error $\vert u^* - u^{\text{GLP}} \vert$.}
\label{fig:Peak2D_NTM}
\end{figure}

\begin{figure}[htbp]
\centering
\subfloat[performance of $e_\infty(u)$ ]{\includegraphics[width = 0.40\textwidth]{./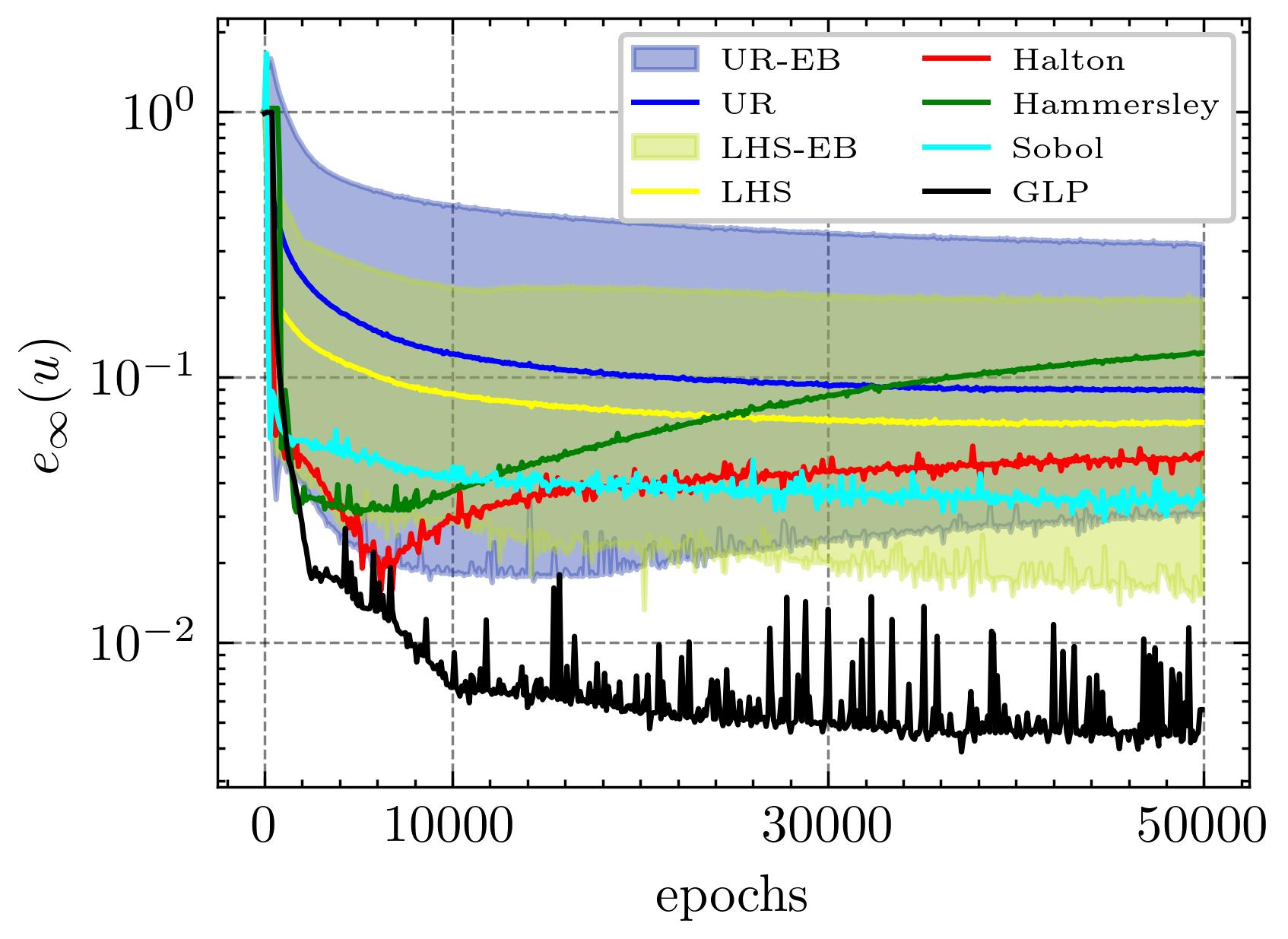}} \quad \quad \quad
\subfloat[performance of $e_2(u)$ ]{\includegraphics[width = 0.40\textwidth]
{./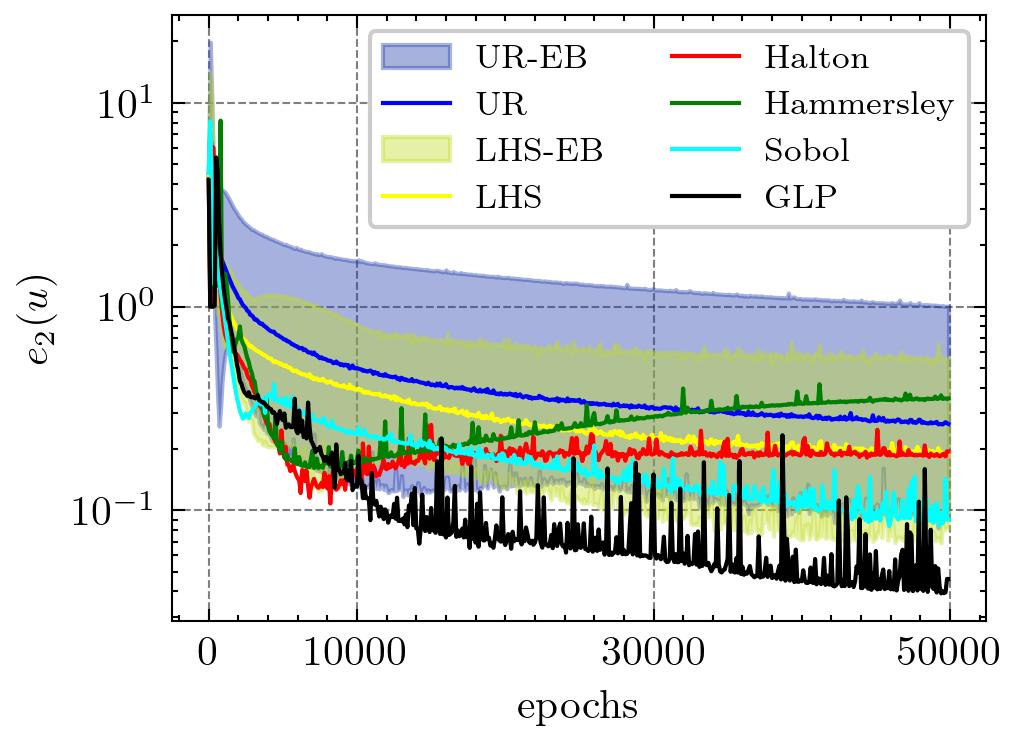}}
\caption{The performance of errors for the two-dimensional Poisson equation  with the solution Eq \eqref{eq:2d_Peaksolution}. (a) the relative error $e_\infty(u)$ with different training epochs; (b) the relative error $e_2(u)$ with different training epochs.}
\label{fig:Peak2D_OnePeak_ErrorEpochs}
\end{figure}

The numerical results of uniform random sampling are given in Fig \ref{fig:Peak2D_UniformRandom}. Training points sampled by uniform random sampling method of the solution Eq \eqref{eq:2d_Peaksolution} are shown in Fig \ref{fig:Peak2D_UniformRandom}(a), the approximate solution of PINNs using uniform random sampling is shown in Fig \ref{fig:Peak2D_UniformRandom}(b), and the absolute error is presented in Fig \ref{fig:Peak2D_UniformRandom}(c). Similarly, the numerical results for LHS mehtod, Halton sequence, Hammersley sequence, Sobel sequence and GLP sampling are given in Fig \ref{fig:Peak2D_LHS}--%, \ref{fig:Peak2D_Halton}, \ref{fig:Peak2D_Hammersley}, \ref{fig:Peak2D_Sobol}, 
\ref{fig:Peak2D_NTM}, respectively.

Fig \ref{fig:Peak2D_OnePeak_ErrorEpochs} illustrates the performance of six distinct methods in terms of their relative errors on the test set throughout the training process. \textcolor{black}{For the random sampling methods, uniform random sampling and LHS method, we randomly select 10 seeds, plot the error bands based on the maximum and minimum errors, and use the average of ten calculations as the results for these two methods.}
\textcolor{black}{As can be seen from Fig  \ref{fig:Peak2D_OnePeak_ErrorEpochs}, the training performance of the Hammersley sequence is poor. However, in a similar comparative experiment (Fig \ref{fig:TwoPeaks2D_ErrorEpochs}), the Hammersley sequence shows good performance. This indicates that the Hammersley sequence is still effective. The author believes that the reason for the Hammersley sequence's error increasing with training steps in Fig \ref{fig:Peak2D_OnePeak_ErrorEpochs} is due to the Adam optimization algorithm. One possible explanation is that during training, the optimization algorithm causes the parameters to enter the neighborhood of a local minima of the loss function and makes it difficult to escape.}
Additionally, Table \ref{tab:Peak_2D_Results} provides a comparative analysis of the errors obtained using these various methods. Upon examining these results, it is evident that the GLP sampling yields superior outcomes compared to the other methods.

\begin{table}[h]
\scriptsize
\centering
\caption{
Comparison of errors using different methods for two-dimensional Poisson equation with one peak of Eq \eqref{eq:2d_Peaksolution}.
}
\setlength{\tabcolsep}{3.mm}{
\begin{tabular}{|c|c|c|c|c|c|c|}%{p{1cm}p{2.2cm}p{1.2cm}p{1.2cm}p{1.2cm}p{1.2cm}p{1.2cm}p{1.6cm}}
\hline\noalign{\smallskip}
relative      &  uniform& LHS & Halton & Hammersley& Sobol&  GLP\\
error        &  random &method& sequence& sequence& sequence & set\\
\hline
$e_\infty(u)$  & $3.243 \times 10^{-2}$  & $2.780 \times 10^{-2}$  &  $4.963 \times 10^{-2}$ & $1.223 \times 10^{-1}$&$3.412 \times 10^{-2}$&$4.475 \times 10^{-3}$\\
% \hline
% $e_\infty^\rho(u)$  & $1.661 \times 10^{-2}$  & $4.855 \times 10^{-3}$  & $2.416 \times 10^{-3}$ & $2.013 \times 10^{-3}$\\
\hline
$e_2(u)$  & $1.051 \times 10^{-1}$  & $7.921 \times 10^{-2}$&
$1.858 \times 10^{-1}$  & $3.523 \times 10^{-1}$&$8.824 \times 10^{-2}$&$4.045 \times 10^{-2}$\\
% \hline
% $e_2^\rho(u)$  & $3.814 \times 10^{-2}$  & $3.530 \times 10^{-2}$  & $1.779 \times 10^{-2}$ & $1.060 \times 10^{-2}$\\
\hline
\end{tabular}
}
\label{tab:Peak_2D_Results} 
\end{table}

\textcolor{black}{
To verify the robustness of GLP sampling under varying sampling budgets, we design the following comparative experiments: under the same conditions as the aforementioned experiments, we develop different sampling strategies based on the number of sampling points. We observe the performance of GLP sampling and uniform random sampling when $N_r$ takes 987, 4181, 6765, and 10946 respectively.}

\begin{figure}[htbp]
\centering
\subfloat[performance of $e_\infty(u)$ ]{\includegraphics[width = 0.40\textwidth]{./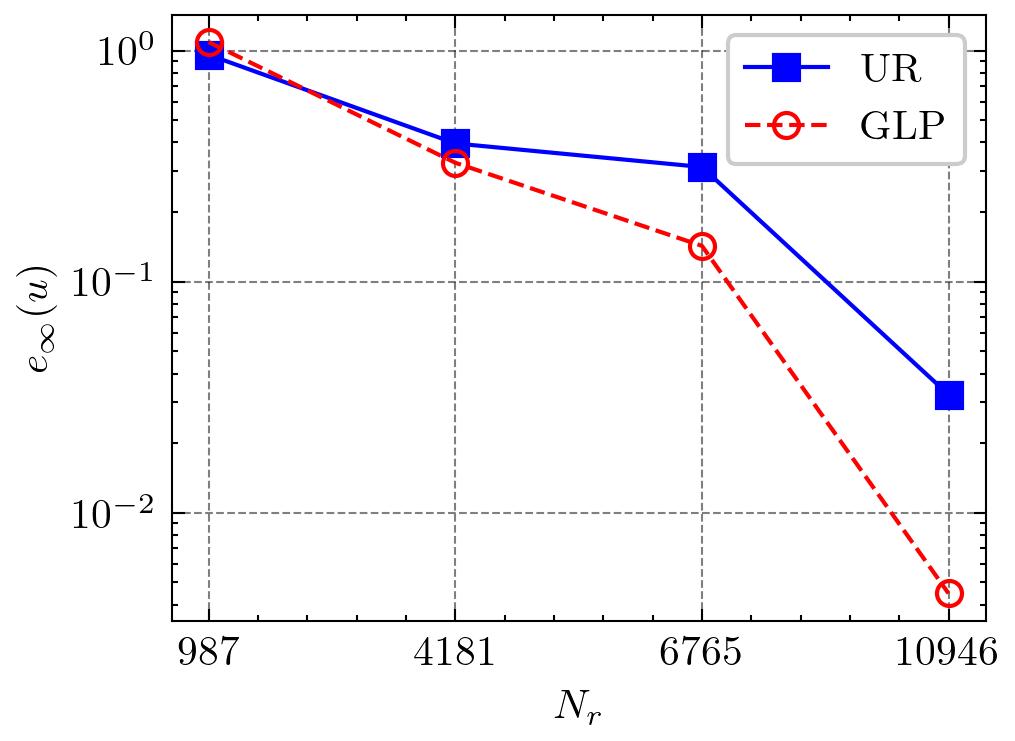}} \quad \quad \quad
\subfloat[performance of $e_2(u)$ ]{\includegraphics[width = 0.40\textwidth]
{./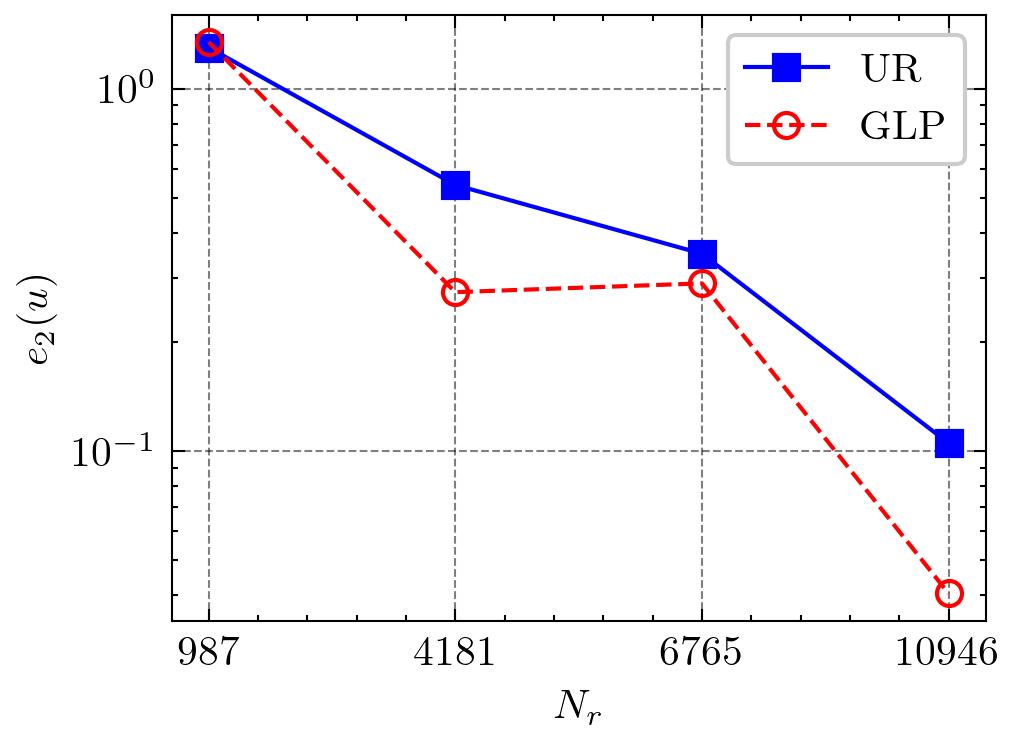}}
\caption{The performance of errors for the two-dimensional Poisson equation  with the solution Eq \eqref{eq:2d_Peaksolution}. (a) the relative error $e_\infty(u)$ with different sampling budgets; (b) the relative error $e_2(u)$ with different sampling budgets.}
\label{fig:Peak2D_OnePeak_ErrorDiffPoints}
\end{figure}

Fig \ref{fig:Peak2D_OnePeak_ErrorDiffPoints} presents the relative error performance. As the number of samples increases, the error of both sampling methods shows a downward trend. However, GLP sampling maintains a distinct advantage over uniform random sampling, which is consistent with our theoretical findings.
% @@@@@@@@@@@@@@@@@@@@@@@@@@
\subsubsection{Two-dimensional Poisson equation with two peaks}
\label{sec:TwoPeaks_2D}

Consider the Poisson equation Eq \eqref{eq:2d_Poisson} in previous subsection,
the exact solution which has two peaks is chosen as 
\begin{equation}
    \label{eq:TwoPeaks2d_solution}
    \hspace{-0.3cm}
    \begin{array}{r@{}l}
        \begin{aligned}
            u = e^{-1000 \left( x^2+(y-0.5)^2\right)}+ e^{-1000\left(x^2+(y+0.5)^2\right)}.
        \end{aligned}
    \end{array}
\end{equation}
\textcolor{black}{
In order to more comprehensively explore the advantages of GLP points, in this section, we studied the case of non-rectangular computing regions, that is, $\Omega = \left\{ (x,y)|x^2+y^2 < 1 \right\}$.}

\textcolor{black}{
To obtain good lattice points within a circular region, we used a coordinate transformation (Eq \eqref{eq:TwoPeaks2d_transformation}) that preserves the low discrepancy of good lattice points from a rectangular region, as noted in Reference \cite{fang1993number}. 
\begin{equation}
    \label{eq:TwoPeaks2d_transformation}
    \hspace{-0.3cm}
    \begin{array}{r@{}l}
        \begin{aligned}
            x_i &= \sqrt{r_i}cos(2\pi\theta_i) \\
            y_i &= \sqrt{r_i}sin(2\pi\theta_i),
        \end{aligned}
    \end{array}
\end{equation}
where $\left \{(r_i,\theta_i)\right\}_{i=1}^{N_r}$ is a good lattice point set in $(0,1)^2$.}

We sample 10946 points in $\Omega$  and 400 points on $\partial \Omega$ as the training set and $400 \times 400$  points as the test set.  Fig \ref{fig:TwoPeaks2D_points} details 10946 residual training points from six different sampling methods.
After 50000 epochs of Adam training and 50000 epochs of LBFGS training, the absolute errors using different sampling methods are shown in Fig \ref{fig:TwoPeaks2D_DifferentErrors}. \textcolor{black}{To present the results of these six methods more clearly, we carry out numerical experiments with the same setup as in Fig \ref{fig:Peak2D_OnePeak_ErrorEpochs}. And we have provided the performance of relative errors during the training process in Fig \ref{fig:TwoPeaks2D_ErrorEpochs}, and the relative errors of the six methods in Table \ref{tab:TwoPeaks2D_Results}, which show that GLP sampling behaves best.}

\begin{figure}[htbp]
\centering
\subfloat[UR]{\includegraphics[width = 0.31\textwidth]{./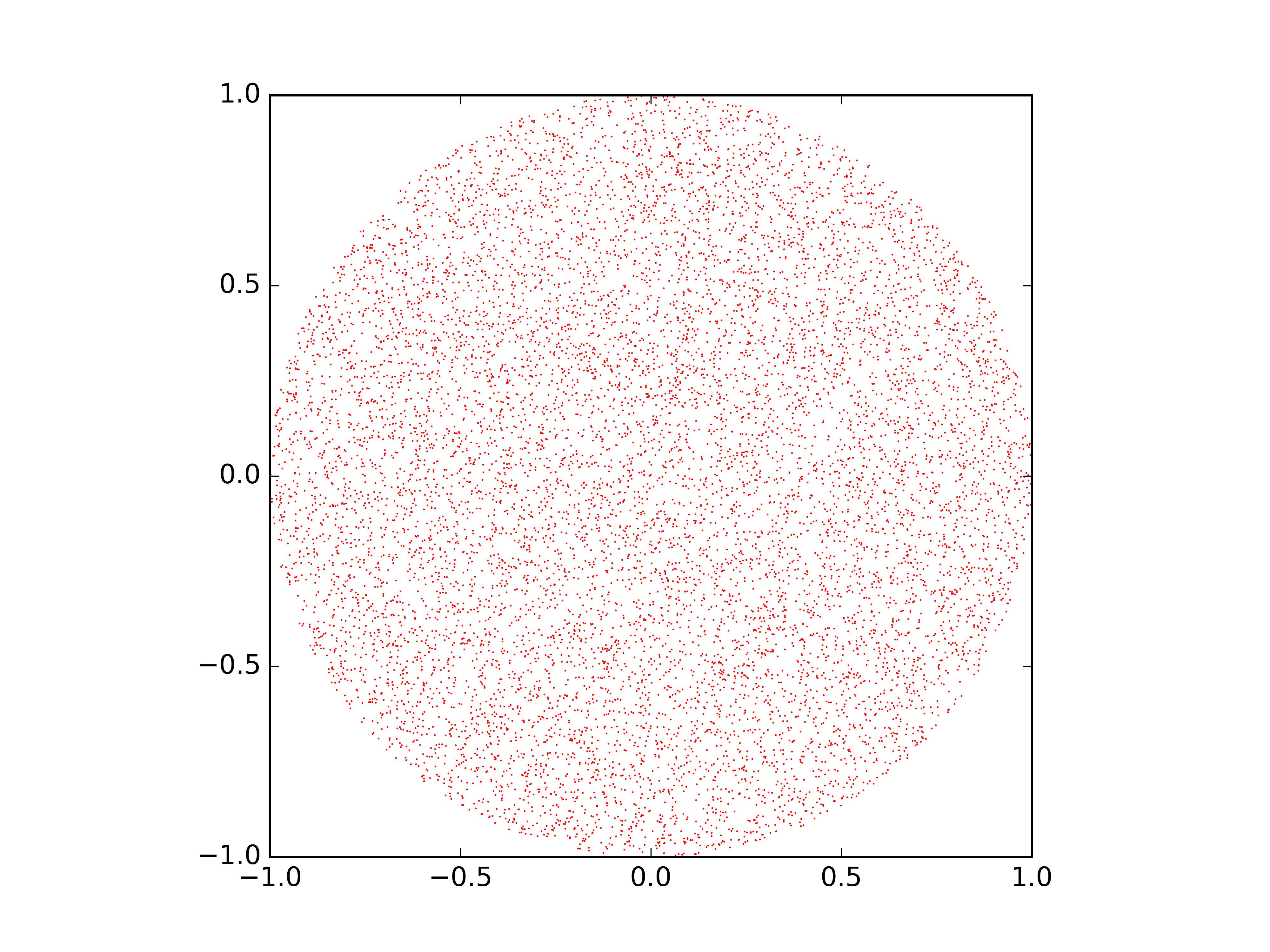}}
\subfloat[LHS]{\includegraphics[width = 0.31\textwidth]{./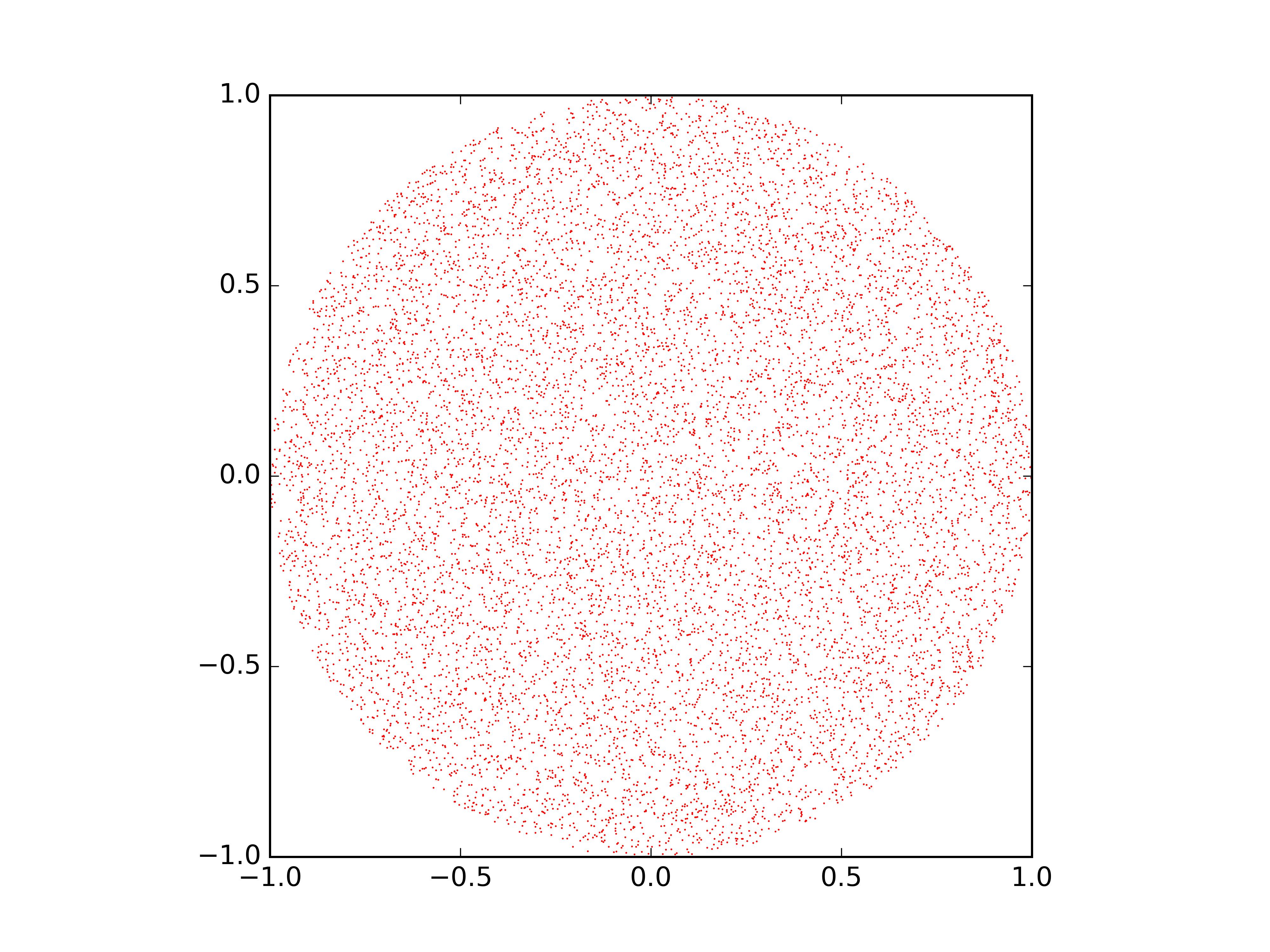}}
\subfloat[Halton]{\includegraphics[width = 0.31\textwidth]{./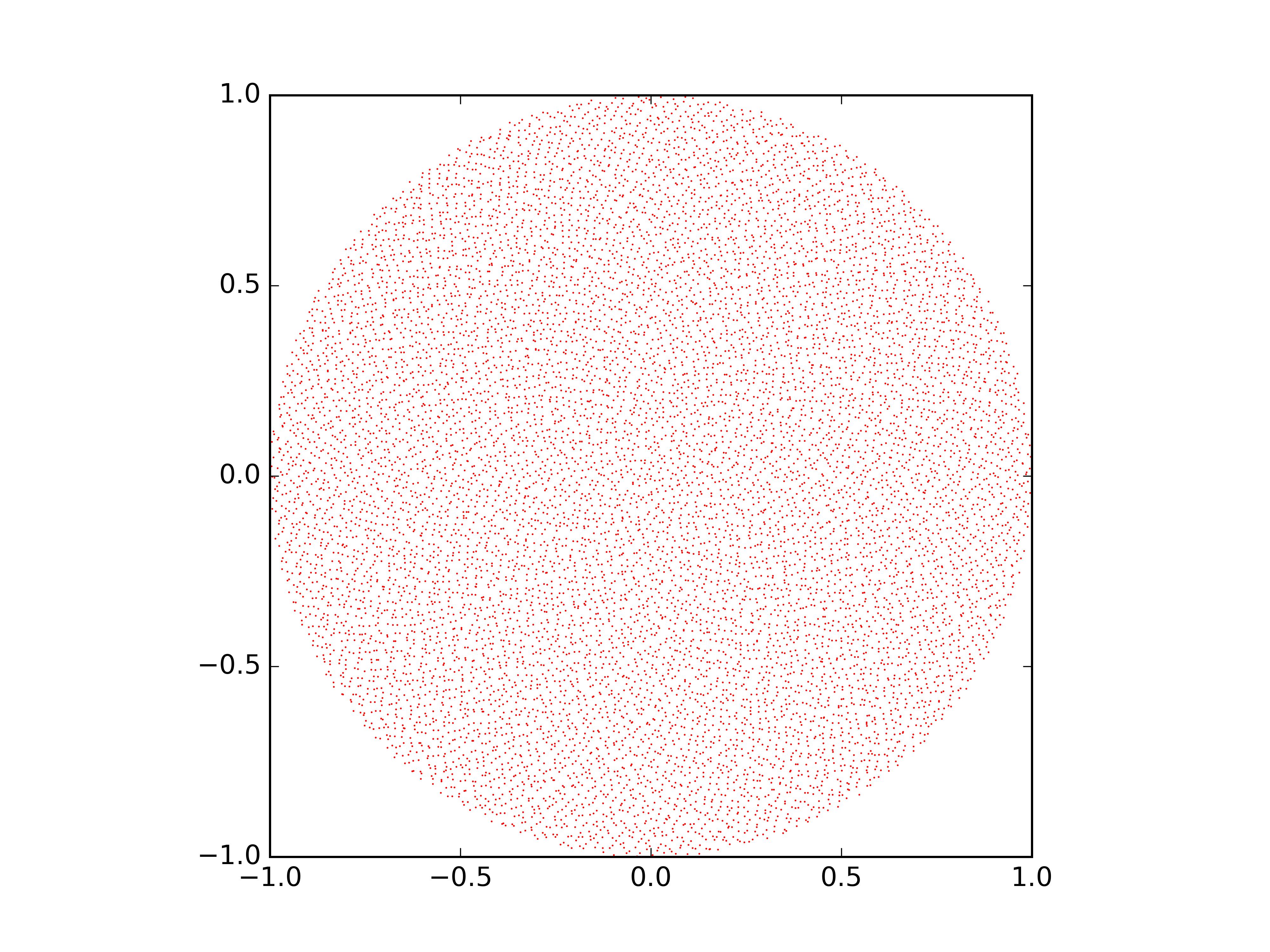}}
\\
\subfloat[Hammersley]{\includegraphics[width = 0.31\textwidth]{./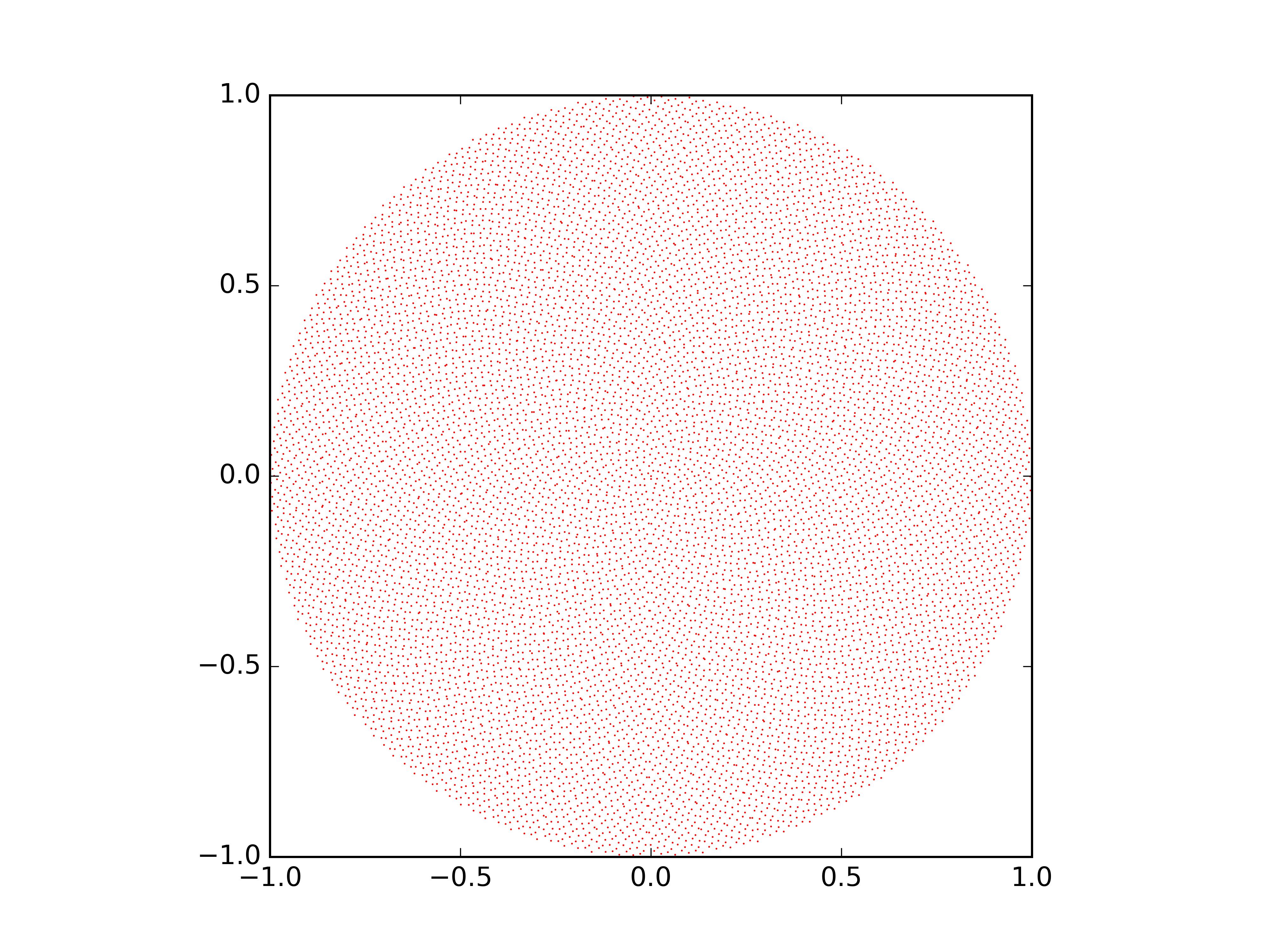}}
\subfloat[Sobol]{\includegraphics[width = 0.31\textwidth]{./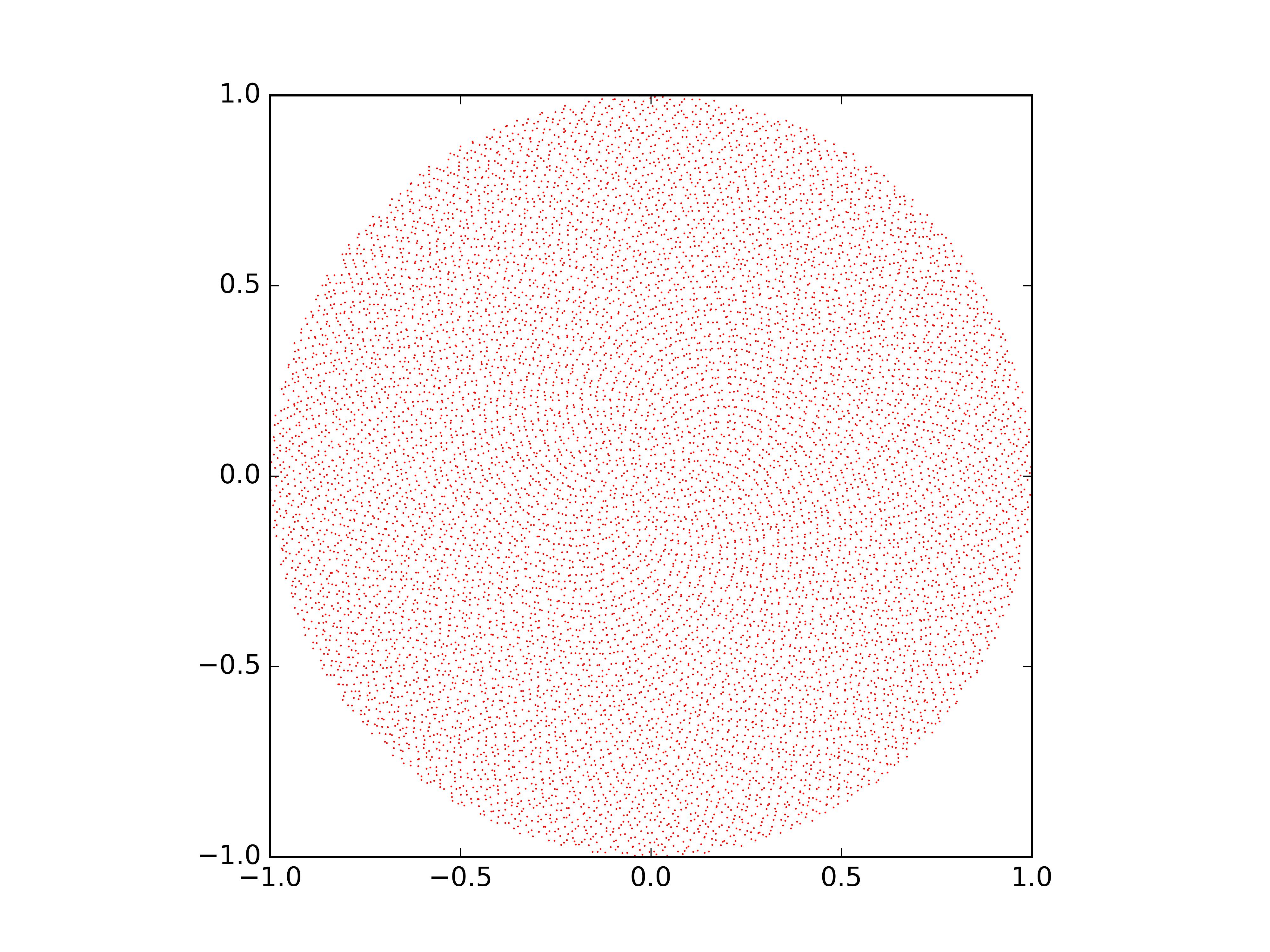}}
\subfloat[GLP]{\includegraphics[width = 0.31\textwidth]{./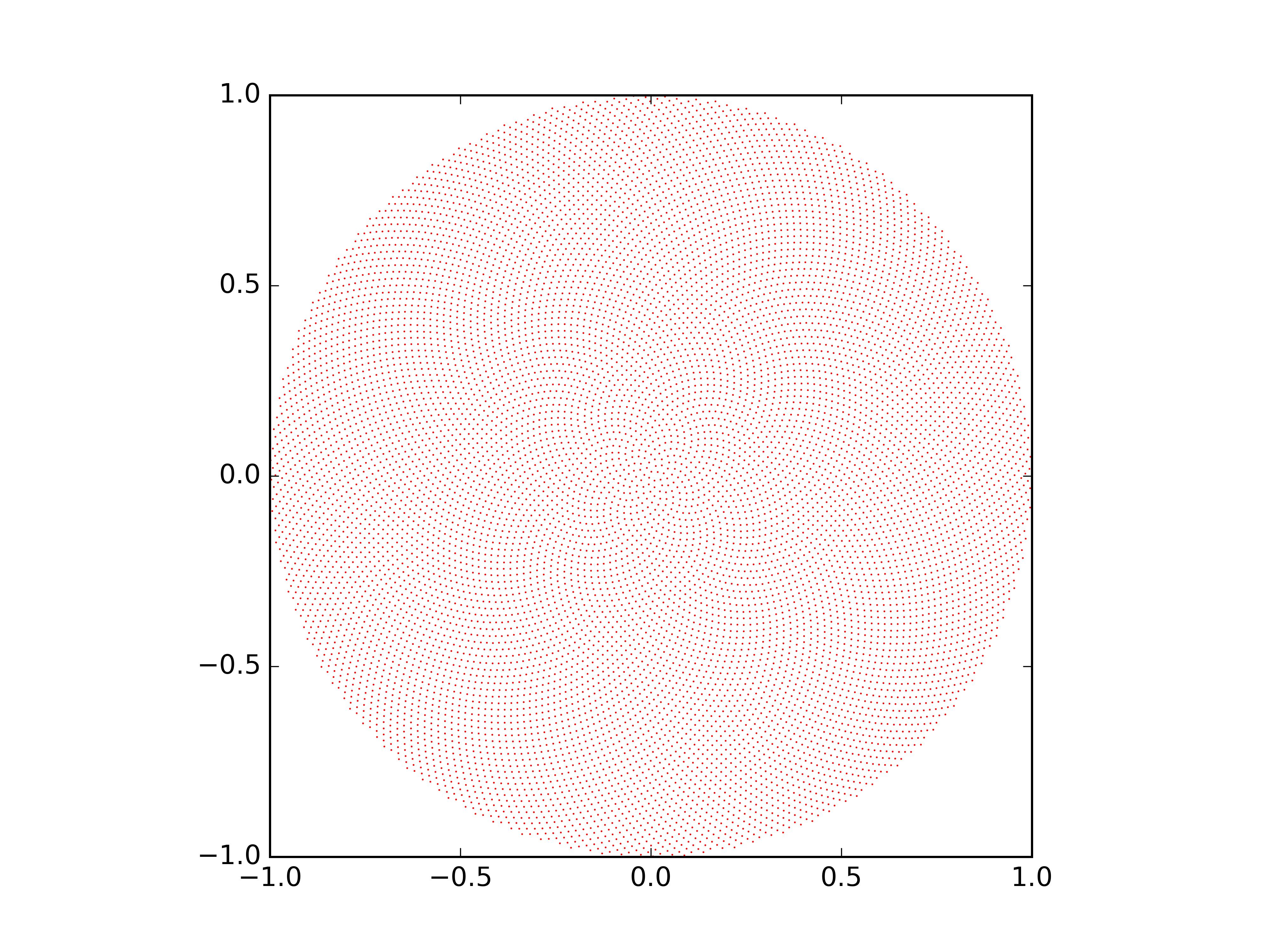}}
\caption{The sampling points of six different sampling methods for the two-dimensional Poisson equation with the exact solution  Eq \eqref{eq:TwoPeaks2d_solution}.}
\label{fig:TwoPeaks2D_points}
\end{figure}

\begin{figure}[htbp]
\centering
\subfloat[absolute error $\vert u^* - u^{\text{UR}} \vert$]{\includegraphics[width = 0.31\textwidth]{./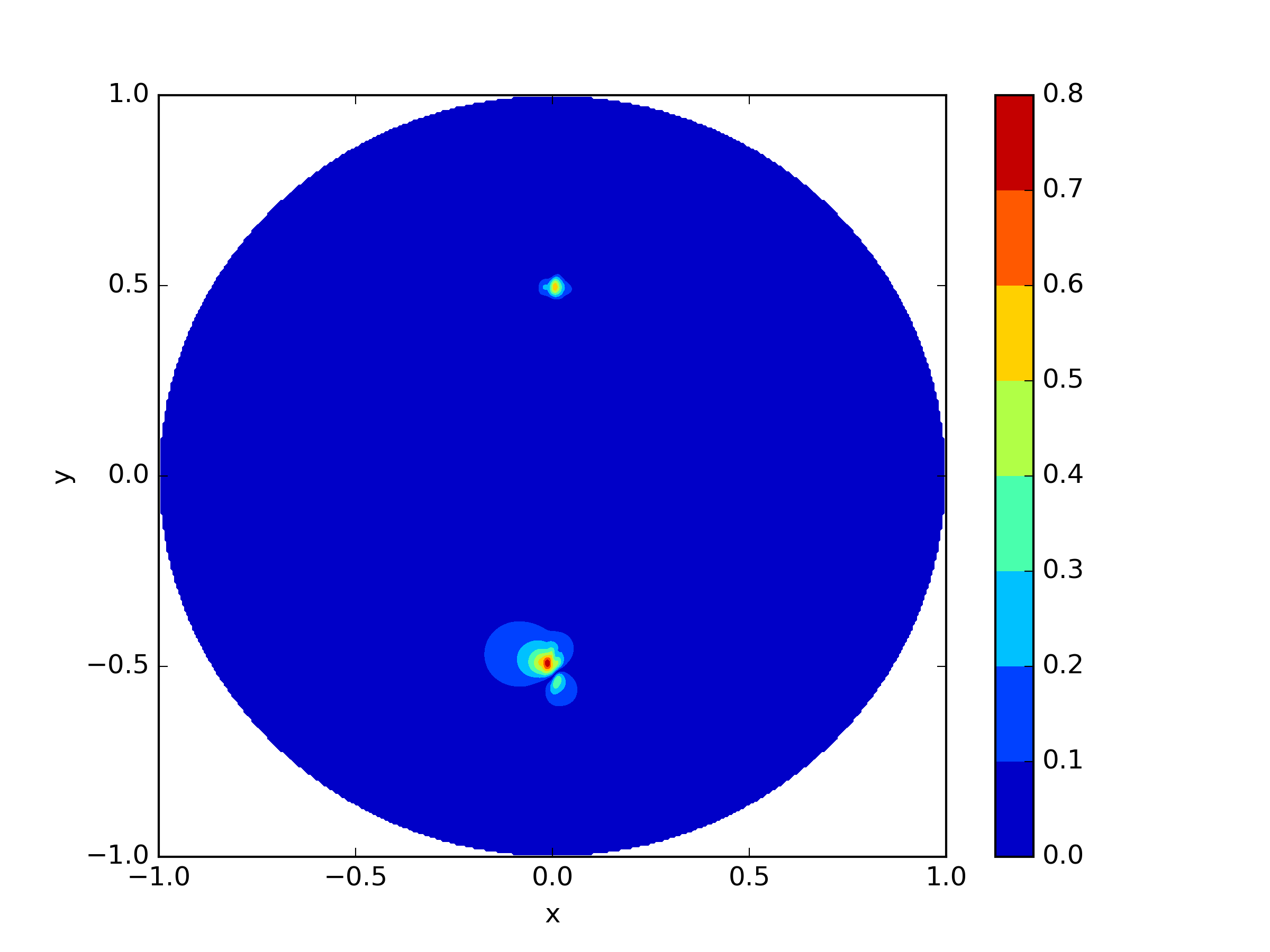}}
\subfloat[absolute error $\vert u^* - u^{\text{LHS}} \vert$]{\includegraphics[width = 0.31\textwidth]{./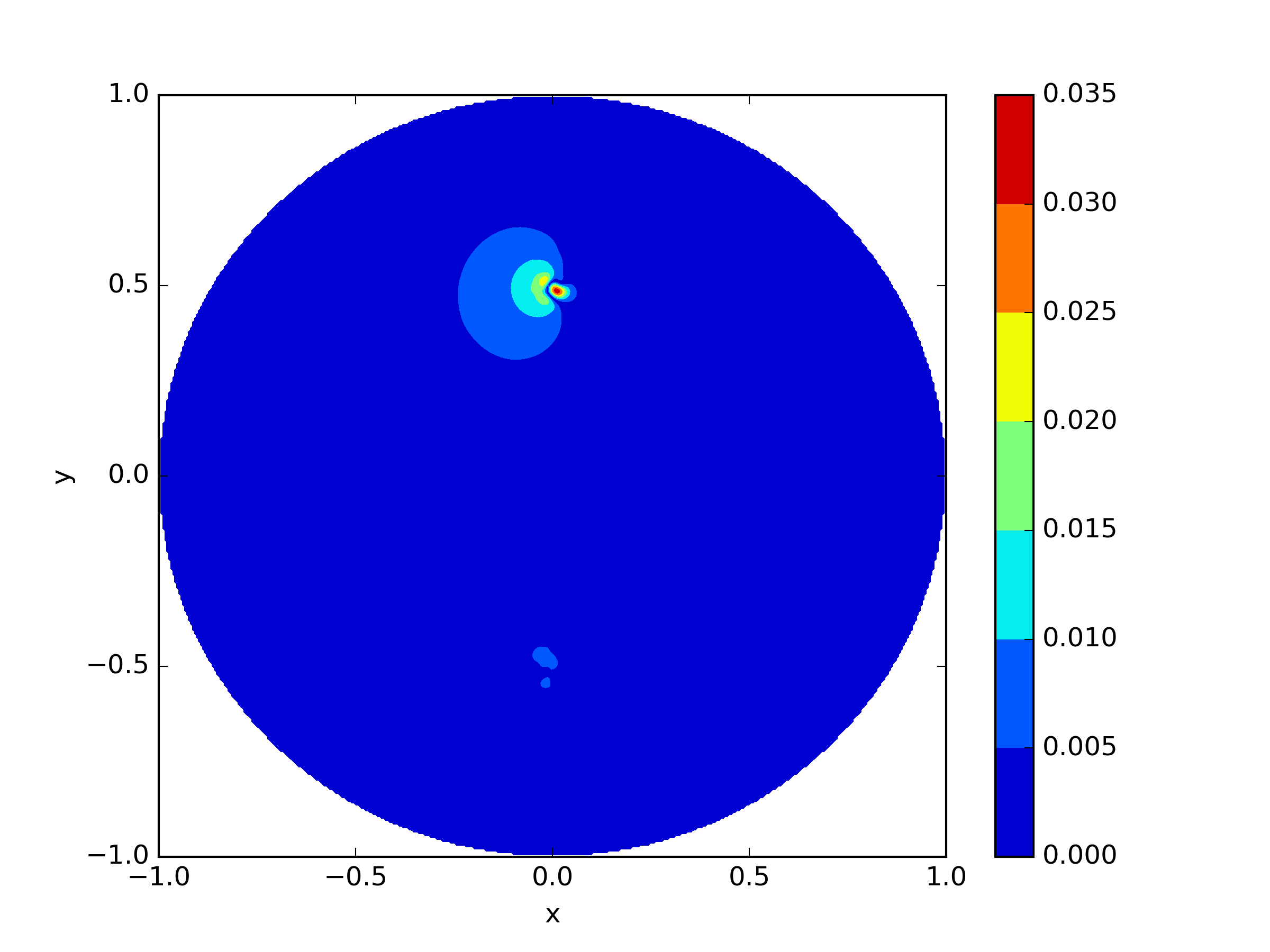}}
\subfloat[absolute error $\vert u^* - u^{\text{Halton}} \vert$]{\includegraphics[width = 0.31\textwidth]{./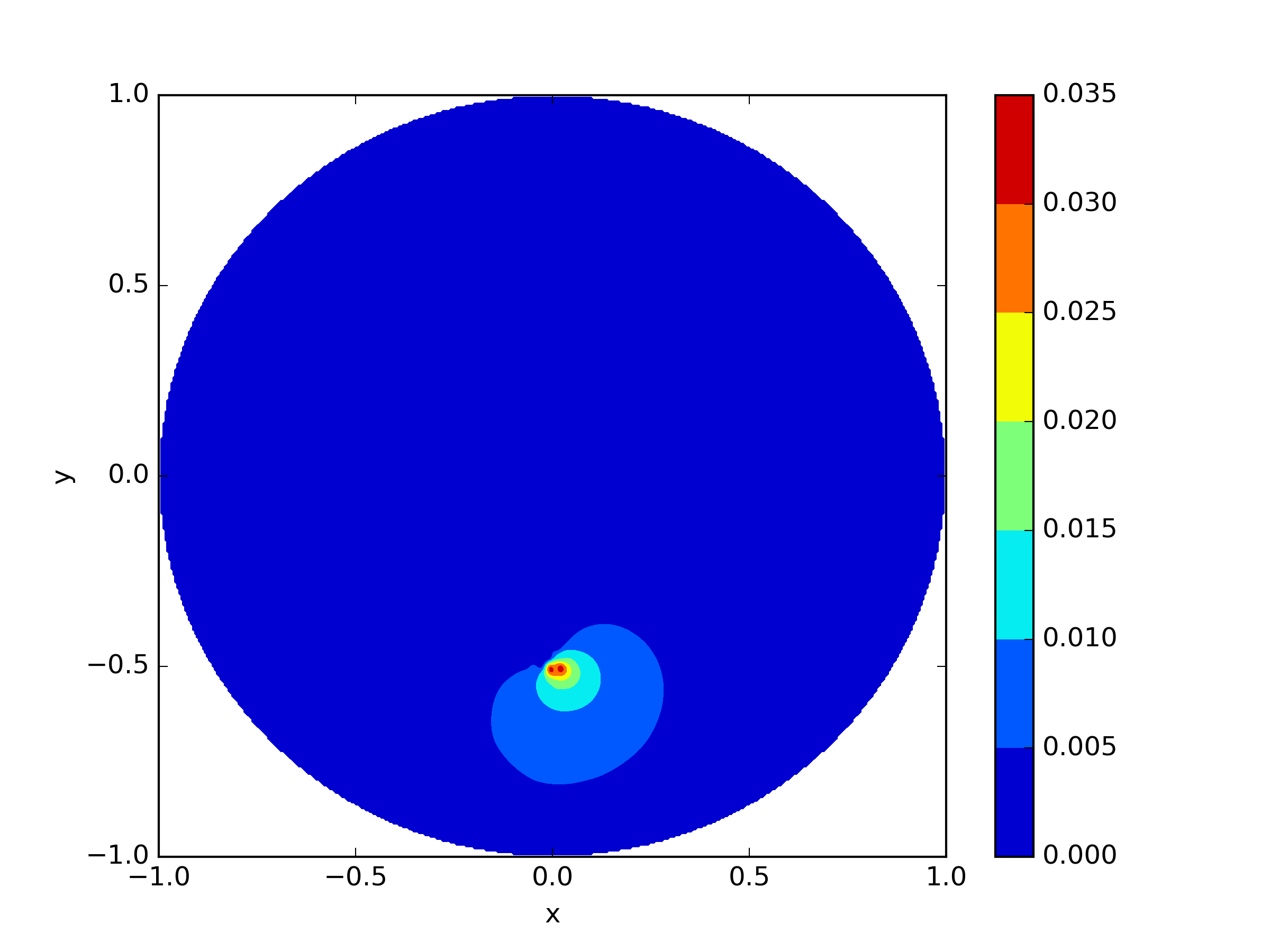}}
\\
\subfloat[absolute error $\vert u^* - u^{\text{Hammersley}} \vert$]{\includegraphics[width = 0.31\textwidth]{./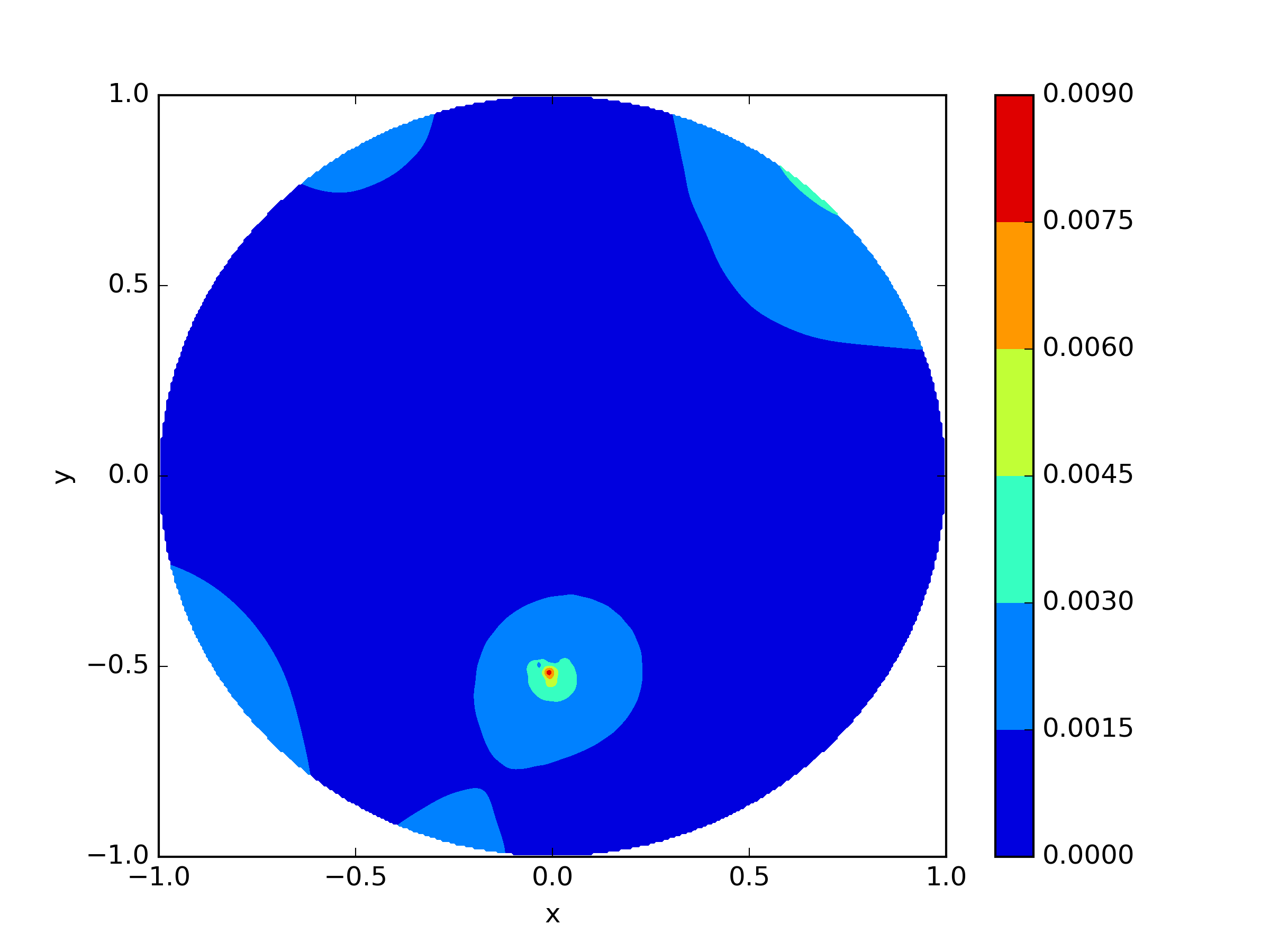}}
\subfloat[absolute error $\vert u^* - u^{\text{Sobol}} \vert$]{\includegraphics[width = 0.31\textwidth]{./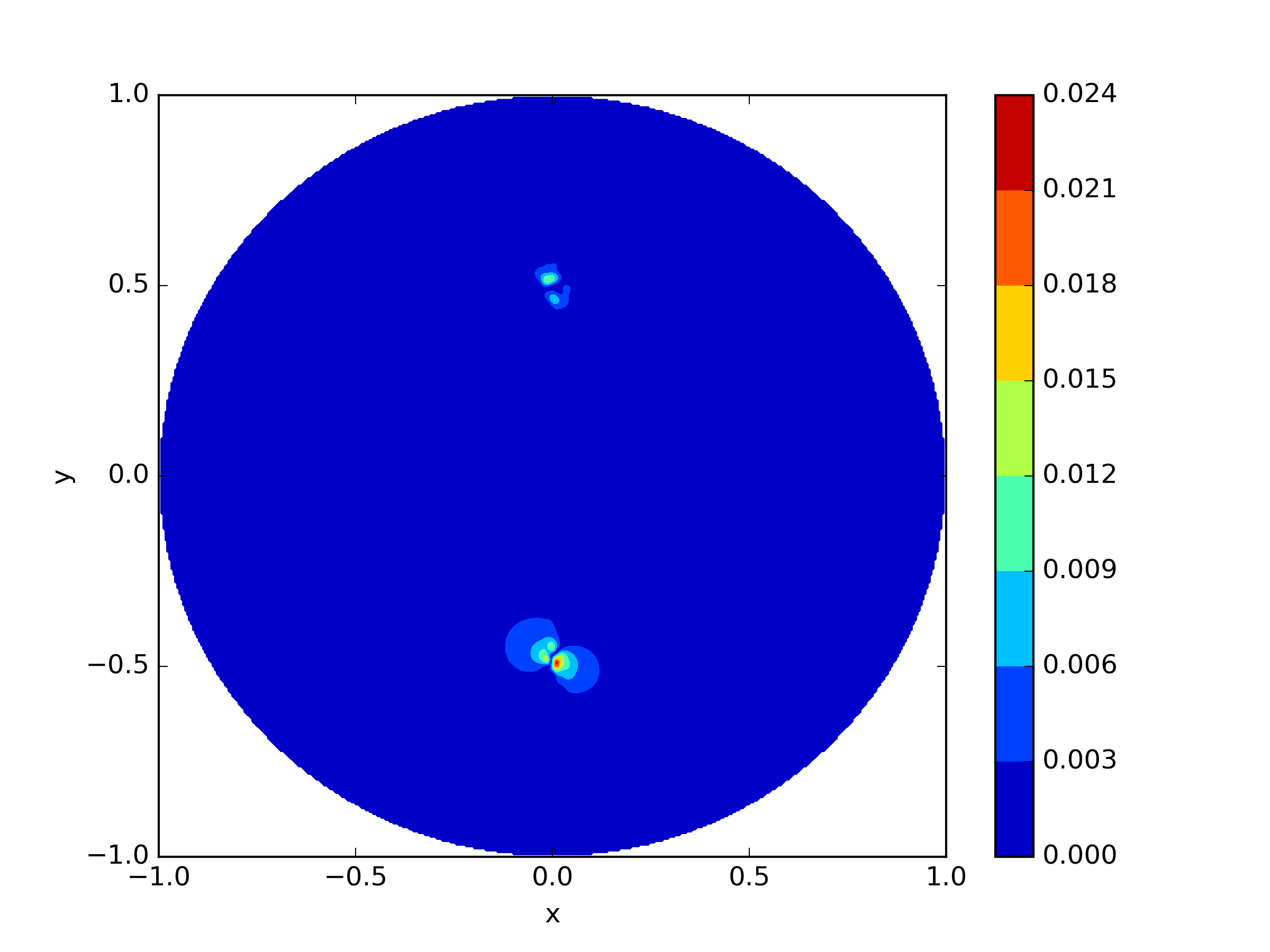}}
\subfloat[absolute error $\vert u^* - u^{\text{GLP}} \vert$]{\includegraphics[width = 0.31\textwidth]{./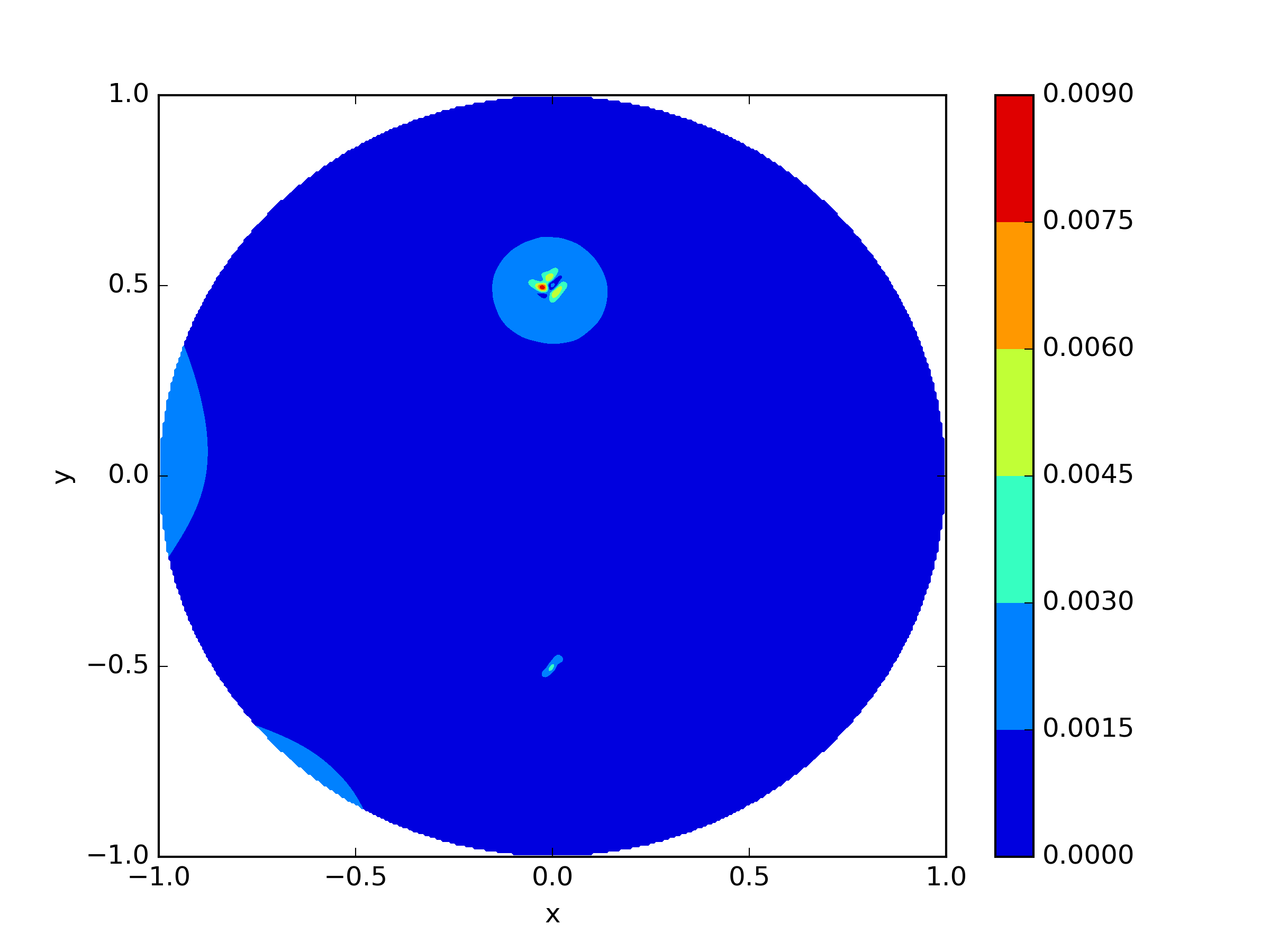}}
\caption{The absolute errors of six different sampling methods for the two-dimensional Poisson equation with the exact solution  Eq \eqref{eq:TwoPeaks2d_solution}.}
\label{fig:TwoPeaks2D_DifferentErrors}
\end{figure}

\begin{figure}[htbp]
\centering 
\subfloat[performance of $e_\infty(u)$]{\includegraphics[width = 0.40\textwidth]{./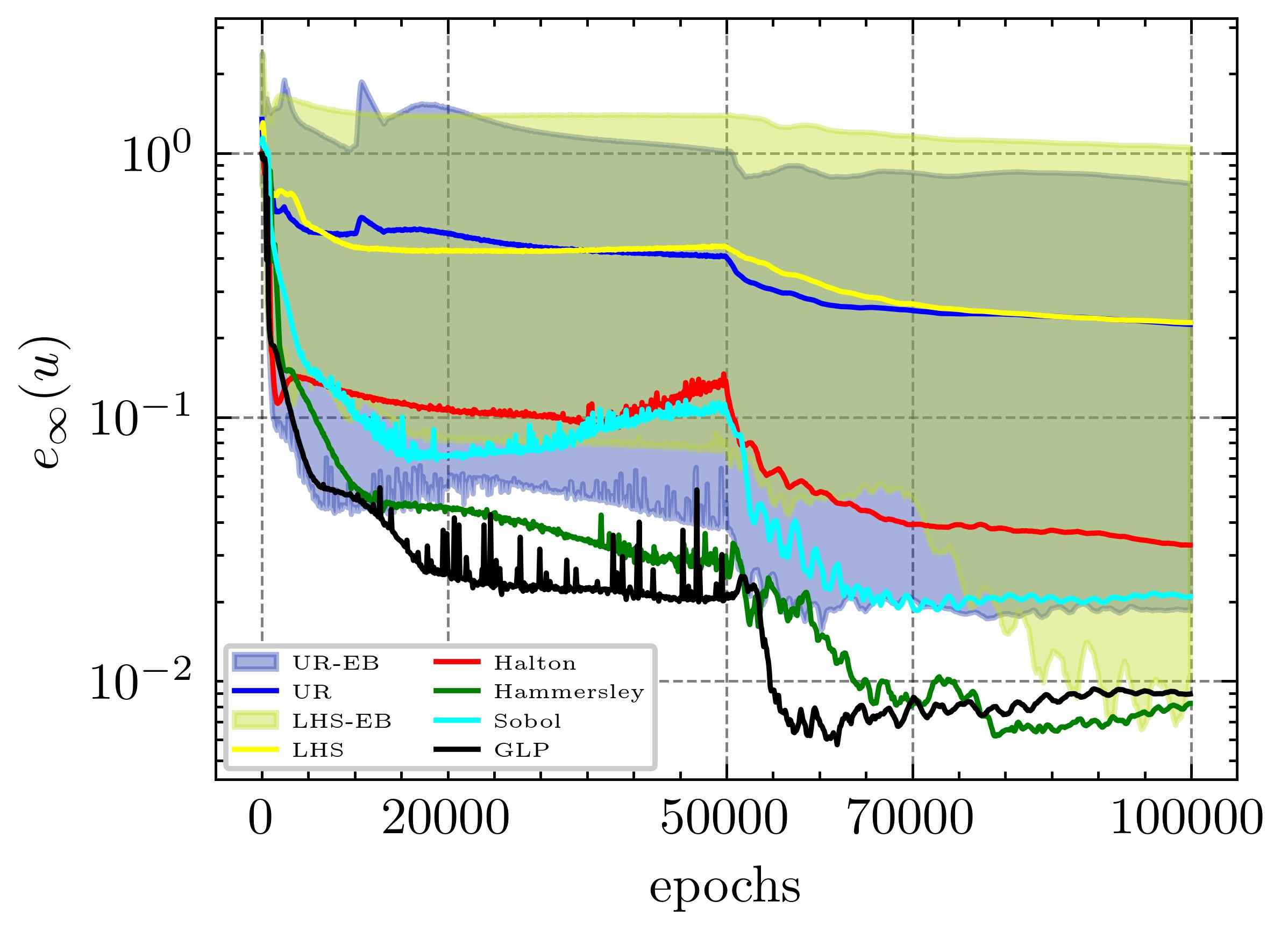}} \quad \quad 
\subfloat[performance of $e_2(u)$]{\includegraphics[width = 0.40\textwidth]
{./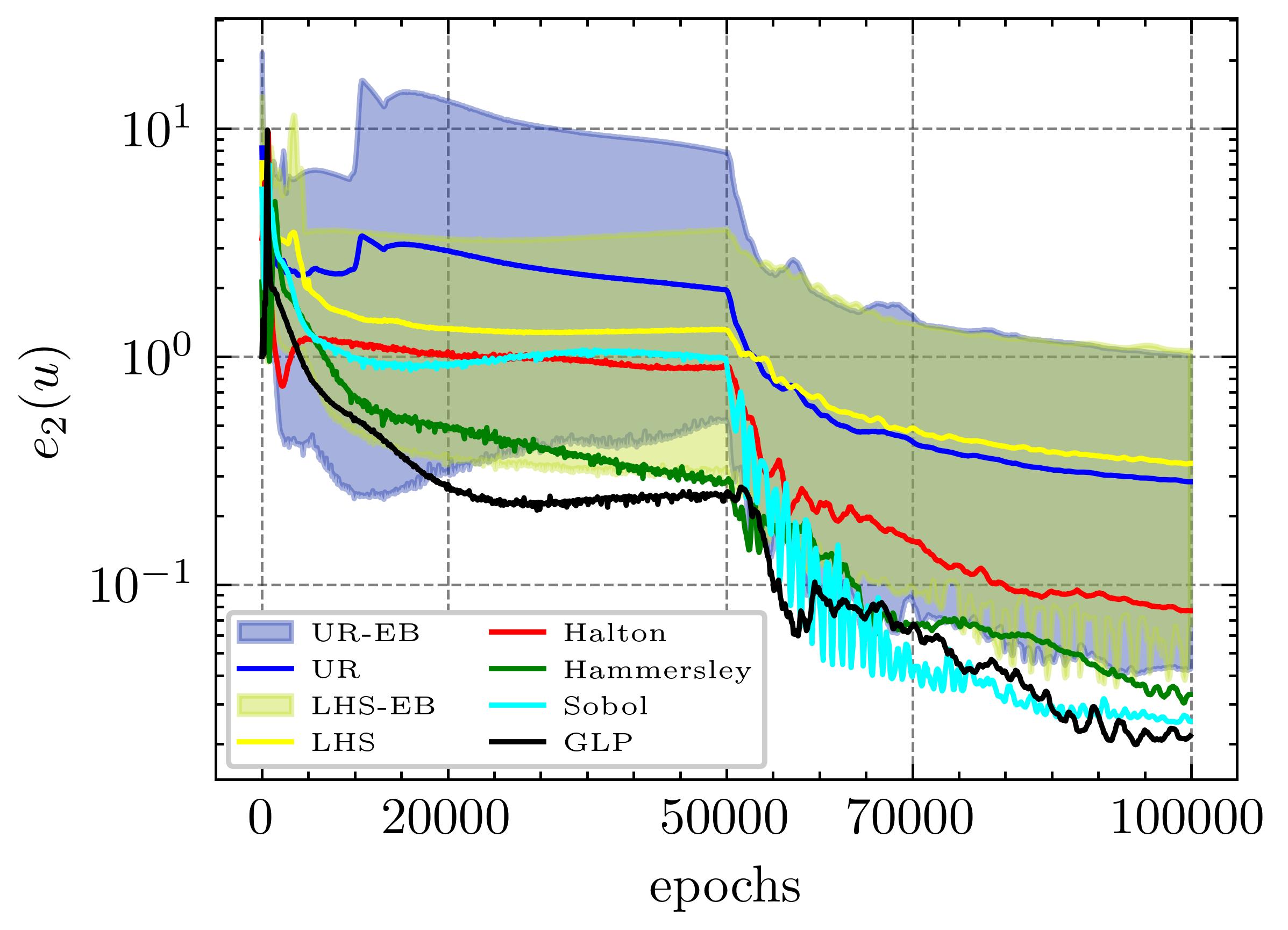}}
\caption{The performance of errors for the two-dimensional Poisson equation  with the solution Eq \eqref{eq:TwoPeaks2d_solution}. (a) the relative error $e_\infty(u)$ with different training epochs; (b) the relative error $e_2(u)$ with different training epochs.}
\label{fig:TwoPeaks2D_ErrorEpochs}
\end{figure}

\begin{table}[h]
\scriptsize
\centering
\caption{
Comparison of errors using different methods  for two-dimensional Poisson equation with two peaks of Eq \eqref{eq:TwoPeaks2d_solution}.
}
\setlength{\tabcolsep}{3.mm}{
\begin{tabular}{|c|c|c|c|c|c|c|}%{p{1cm}p{2.2cm}p{1.2cm}p{1.2cm}p{1.2cm}p{1.2cm}p{1.2cm}p{1.6cm}}
\hline\noalign{\smallskip}
relative      &  uniform& LHS & Halton & Hammersley& Sobol&  GLP\\
error        &  random & method & sequence& sequence& sequence & set\\
\hline
$e_\infty(u)$  & $7.709 \times 10^{-1}$  & $3.316 \times 10^{-2}$  &  $3.279 \times 10^{-2}$ & $  8.220 \times 10^{-3}$&$2.096 \times 10^{-2}$&$8.943 \times 10^{-3}$\\
\hline
$e_2(u)$  & $1.008 \times 10^{0}$  & $6.640 \times 10^{-2}$&
$7.697 \times 10^{-2}$  & $3.283 \times 10^{-2}$&$2.467 \times 10^{-2}$&$2.171 \times 10^{-2}$\\
\hline
\end{tabular}
}
\label{tab:TwoPeaks2D_Results} 
\end{table}

% In Fig (\ref{fig:TwoPeaks2D_UniformRandom} and \ref{fig:TwoPeaks2D_NTM}), the sampling points are not given directly because they are the same as in Section \ref{sec:OnePeak_2D}. In Fig \ref{fig:TwoPeaks2D_UniformRandom}, the analytic solution, the predicted solution based on uniform random sampling, and the absolute error are given. In Fig \ref{fig:TwoPeaks2D_NTM}, the predicted solution and absolute error based on NTM sampling are given.

% For a clearer comparison between uniform random sampling and NTM, the performance of the the relative errors during training is presented in Fig \ref{fig:TwoPeaks2D_ErrorEpochs}.

Similar to the previous subsection, we also design comparative experiments to observe the performance of GLP sampling and uniform random sampling when $N_r$ takes 987, 4181, 6765, and 10946, so as to illustrate the robustness of GLP sampling to sampling budgets.

\begin{figure}[htbp]
\centering
\subfloat[performance of $e_\infty(u)$ ]{\includegraphics[width = 0.40\textwidth]{./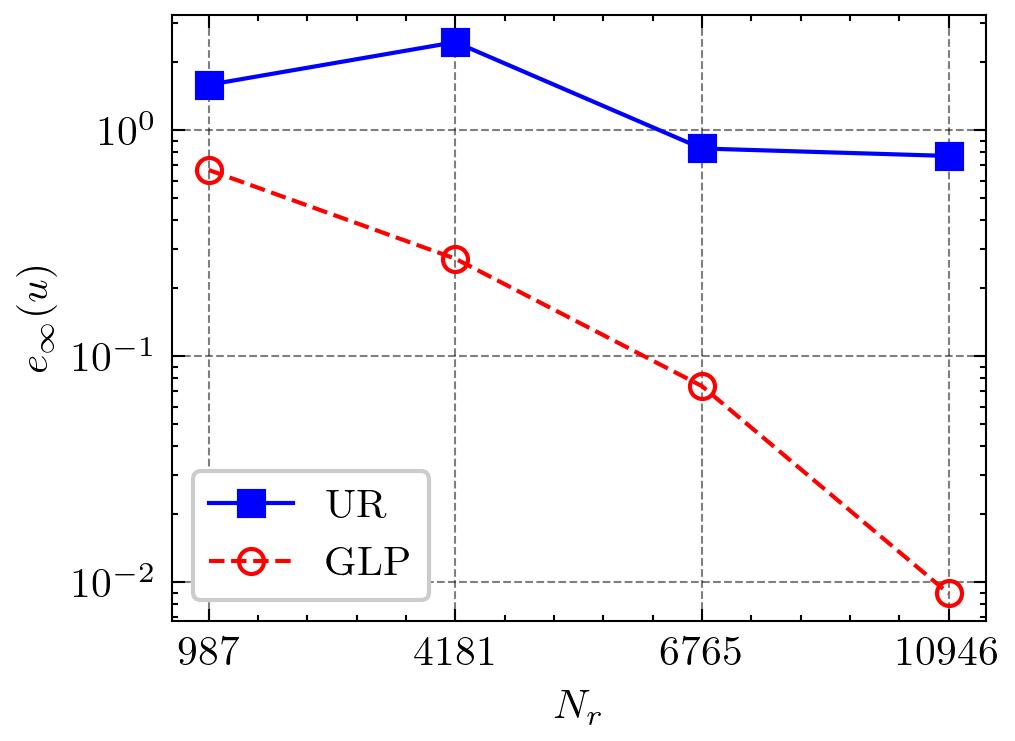}} \quad \quad \quad
\subfloat[performance of $e_2(u)$ ]{\includegraphics[width = 0.40\textwidth]
{./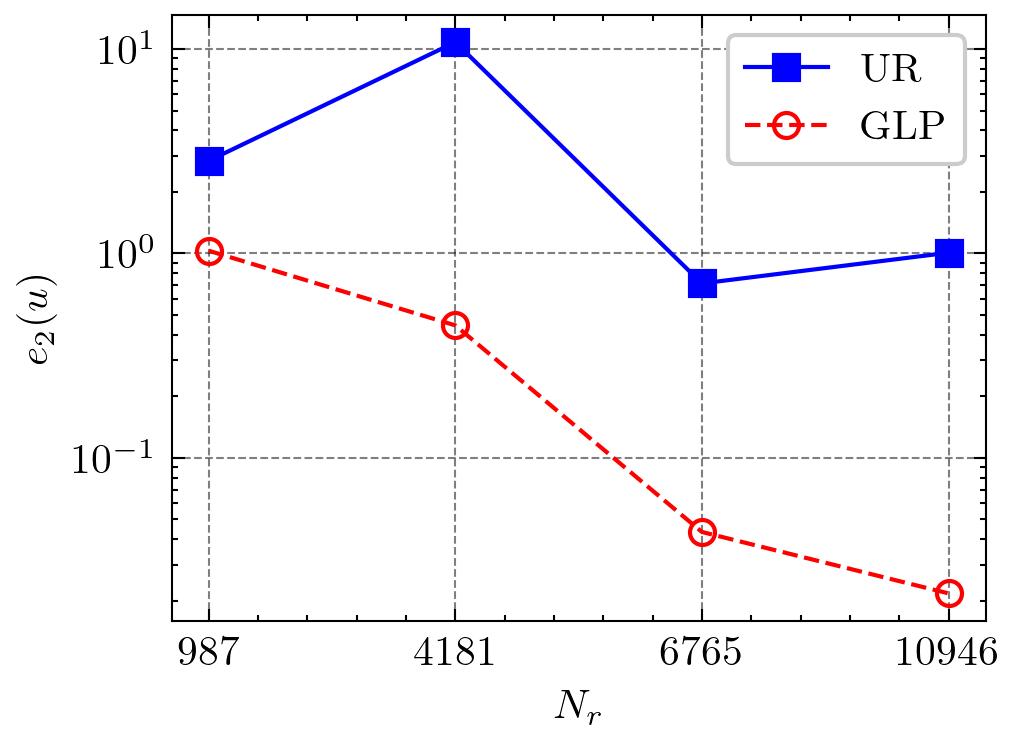}}
\caption{The performance of errors for the two-dimensional Poisson equation  with the solution Eq \eqref{eq:TwoPeaks2d_solution}. (a) the relative error $e_\infty(u)$ with different sampling budgets; (b) the relative error $e_2(u)$ with different sampling budgets.}
\label{fig:TwoPeaks2D_ErrorDiffPoints}
\end{figure}

From Fig \ref{fig:TwoPeaks2D_ErrorDiffPoints}, it can be observed that GLP sampling demonstrates a superior performance across different numbers of sampling points. Moreover, as the number of sampling points increases, the error associated with GLP sampling shows a decreasing trend, which is consistent with our theoretical findings.

% @@@@@@@@@@@@@@@@@@@@@@@@@@
\subsection{Inverse problem of two-dimensional Helmholtz equation}
\label{sec:Helmholtz_2D}

For the following two-dimensional Helmholtz equation
\begin{equation}
	\label{eq:2d_Helmholtz}
	\hspace{-0.3cm}
	\begin{array}{r@{}l}
		\left\{
		\begin{aligned}
			 \Delta u(x,y) + k^2 u(x,y)& = f(x,y), \quad (x,y) \ \mbox{in} \ \Omega, \\
   %= [-1,1]^2, \\
                  u(x,y) & = 0,  \quad  (x,y) \ \mbox{on} \  \partial \Omega,
		\end{aligned}
		\right.
	\end{array}
\end{equation}
where $\Omega = (0,1)^2$ and $k^2$ is the unknown parameter, the exact solution is defined as %in Eq \eqref{eq:Helmholtz2d_solution}.
\begin{equation}
    \label{eq:Helmholtz2d_solution}
    \hspace{-0.3cm}
    \begin{array}{r@{}l}
        \begin{aligned}
            u = sin(2\pi x) sin(2\pi y).
        \end{aligned}
    \end{array}
\end{equation}
And the source function $f(x,y)$ is given by Eq \eqref{eq:Helmholtz2d_solution}. \textcolor{black}{This is a data-driven problem, which is often used to demonstrate the efficacy of the proposed neural network. Similar as in \cite{dong2025agent,haghighat2021physics,PINN}}, after adding data-driven training points from the exact solution, the output of PINNs is the estimate solution and the parameter $k$. \textcolor{black}{Furthermore, we use uniform random sampling to extract 40 points from the exact solution corresponding to ${\left(k^*\right)}^2=9$ as extra observed data for data-driven.} Additionally, we set the initial value of the parameter $k$ to be 0.1.

We sample 6765 points within $\Omega$ as the training set, while utilizing $400 \times 400$ points for the test set. Since the homogeneous Dirichlet boundary condition is applied, it is easy to use the method of enforcing boundary conditions (\cite{lyu2020enforcing}) to eliminate the influence of boundary loss. In Fig \ref{fig:Helmholtz2D_points}, the GLP sampling points and the uniform random sampling points are shown. The uniformity of the GLP sampling is clearly better  than that of the uniform random sampling.

%The other settings of main parameters in PINNs are given in Table \ref{tab:parameter-PINN}. 

\begin{figure}[htbp]
\centering
\subfloat[Uniform random points]{\includegraphics[width = 0.42\textwidth]{./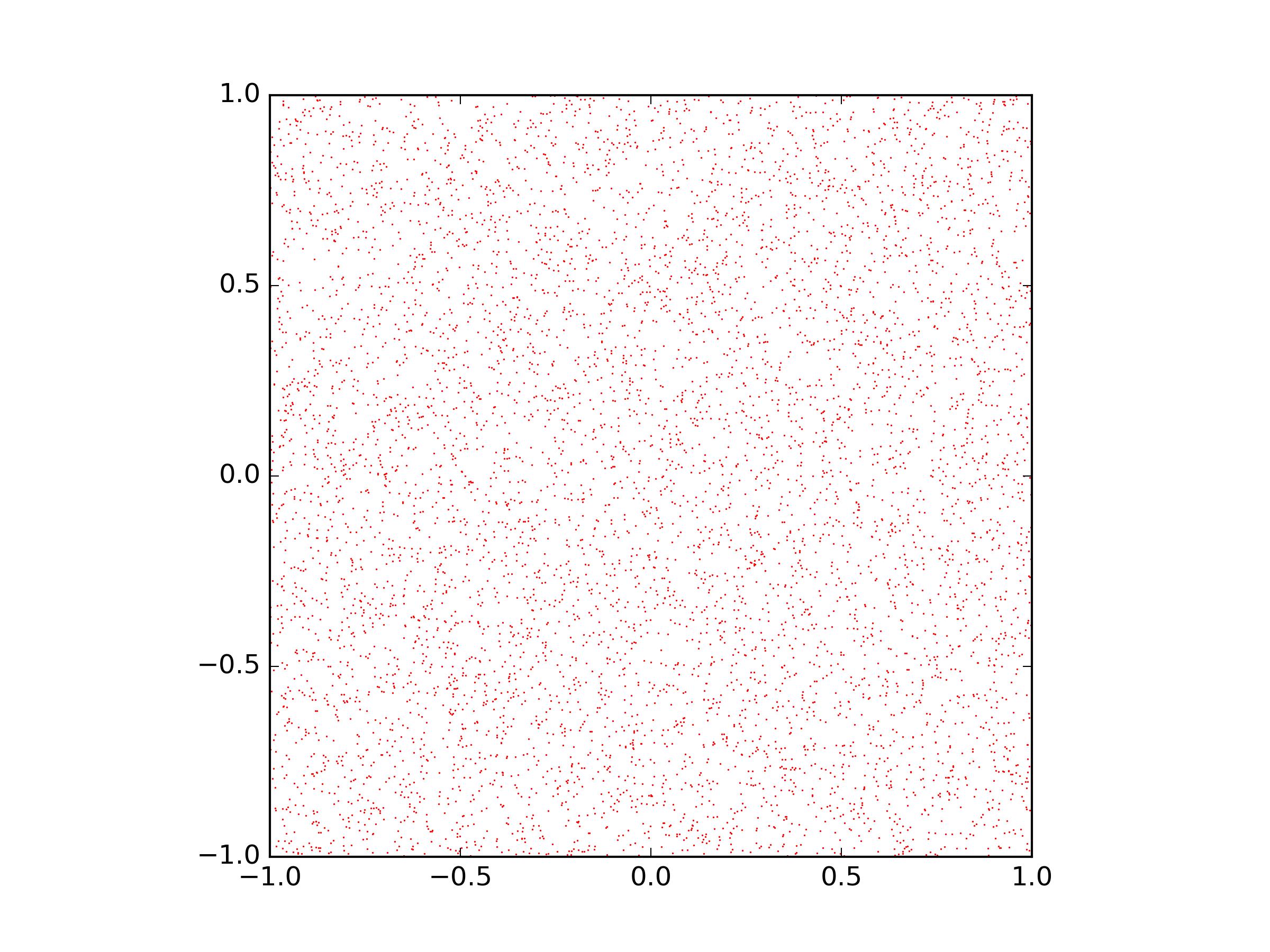}}
\subfloat[GLP set]{\includegraphics[width = 0.42\textwidth]
{./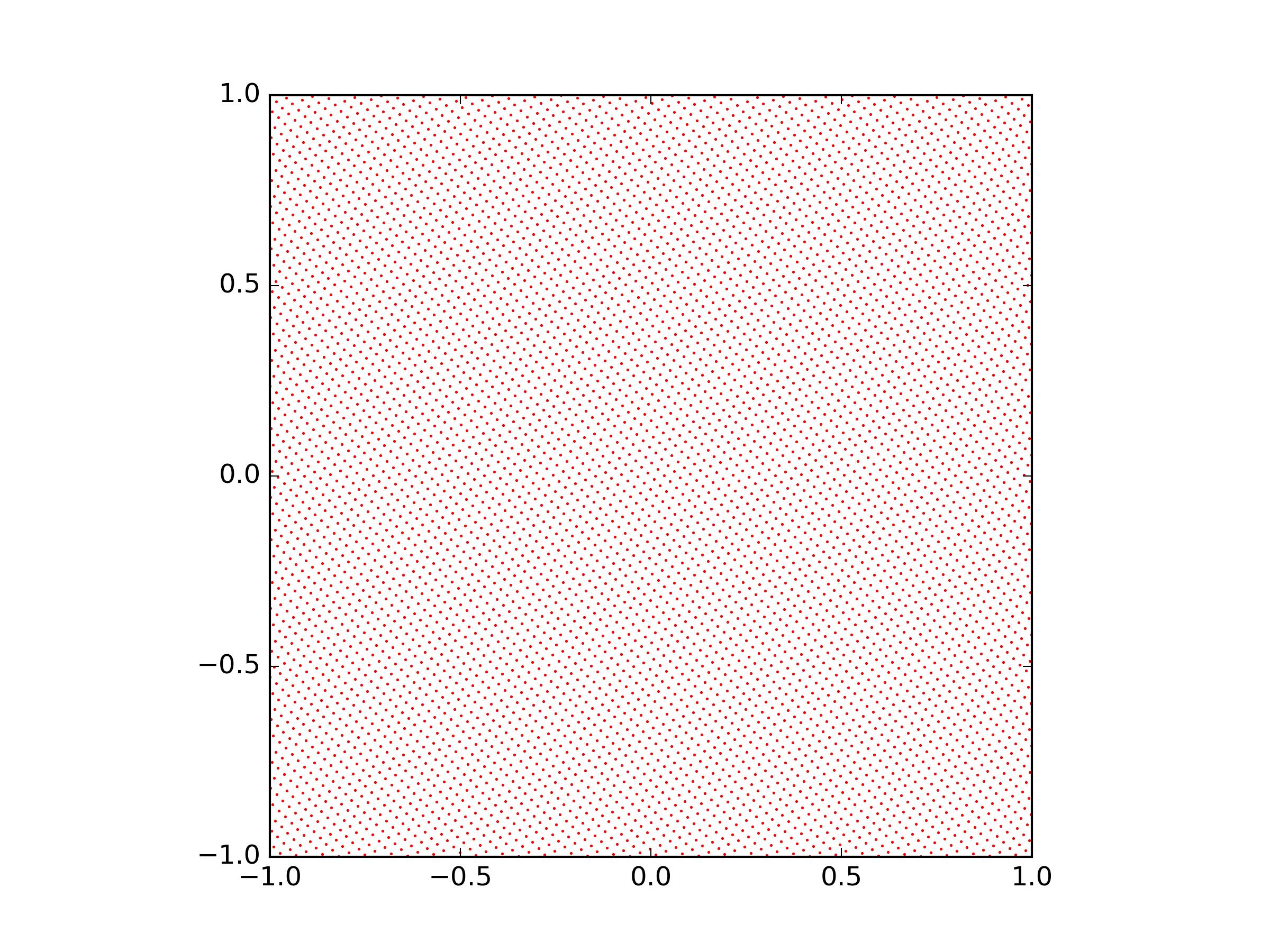}}
\caption{The training points for the inverse problem of two-dimensional Helmholtz equation. (a) the residual training points by uniform random sampling; (b)  the residual training points of GLP sampling.}
\label{fig:Helmholtz2D_points}
\end{figure}

After completing 50,000 epochs of Adam training and an additional 50,000 epochs of LBFGS training, we compare the numerical results obtained through GLP sampling with those derived from uniform random sampling to demonstrate the effectiveness of our proposed method. In Fig \ref{fig:Helmholtz2D_UniformRandom}, the approximate solution and the absolute error using uniform random sampling are shown. And the approximate solution and the absolute error using GLP sampling are given in Fig \ref{fig:Helmholtz2D_NTM}. %For a clearer comparison between uniform random sampling and GLP set, 
Furthermore, the performance of the relative errors of the exact solution $u$ and the absolute error of parameter $k$ during training is given in Fig \ref{fig:Helmholtz2D_ErrorEpochs}. \textcolor{black}{To verify the robustness of GLP set, we simulate realistic measurement errors by introducing noise at different magnitudes (1\%, 5\%, and 10\%) into the observation data. We also conduct comparison experiments, and the results are summarized in Fig \ref{fig:Helmholtz2D_Noise} and Table \ref{tab:Helmholtz_Results}.}
It is  easy to observe that the GLP sampling has advantages over uniform random sampling when the number of training points is the same.

\begin{figure}[htbp]
\centering
\subfloat[approximate solution]{\includegraphics[width = 0.40\textwidth]
{./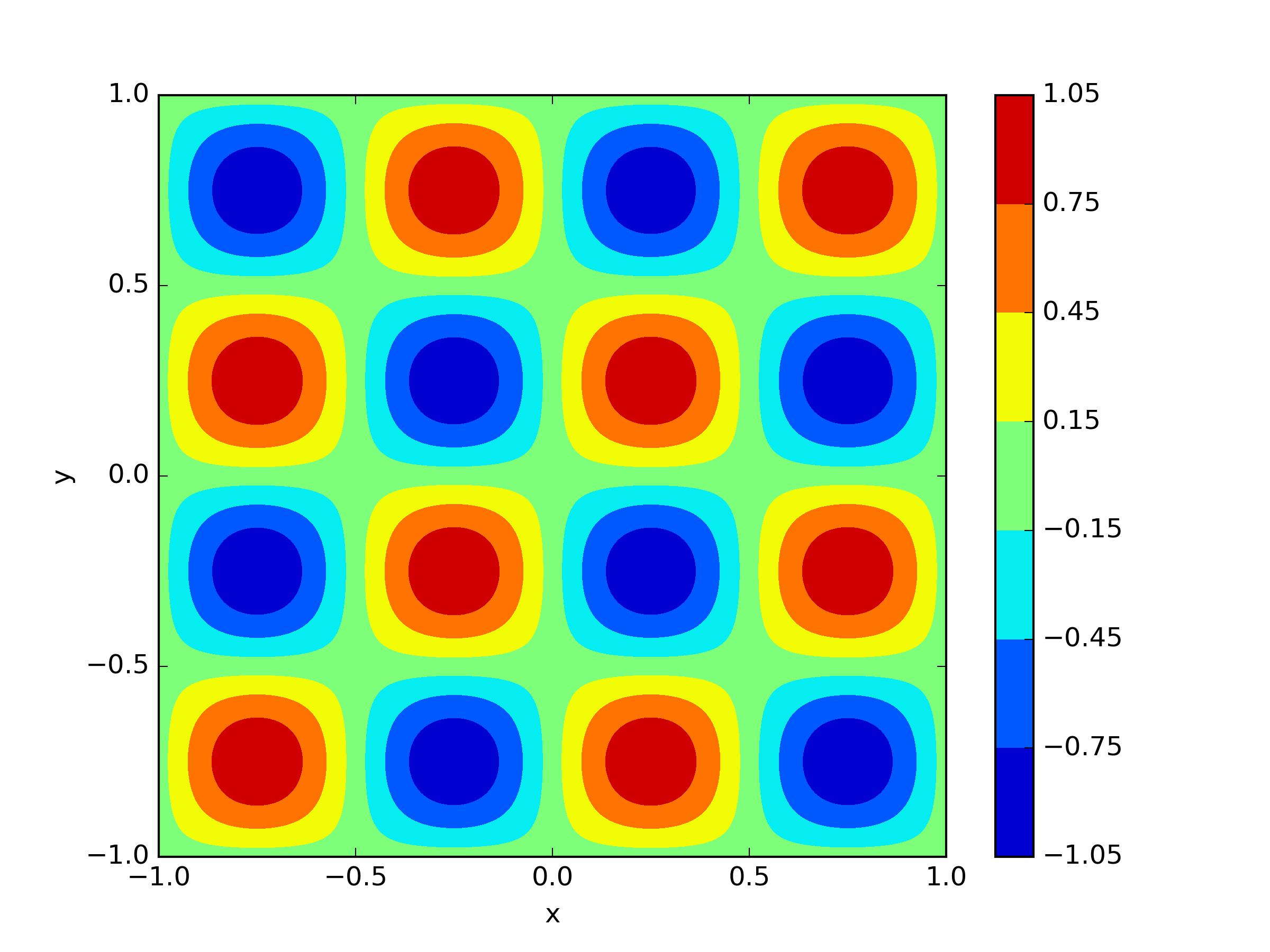}}
\subfloat[absolute error]{\includegraphics[width = 0.40\textwidth]{./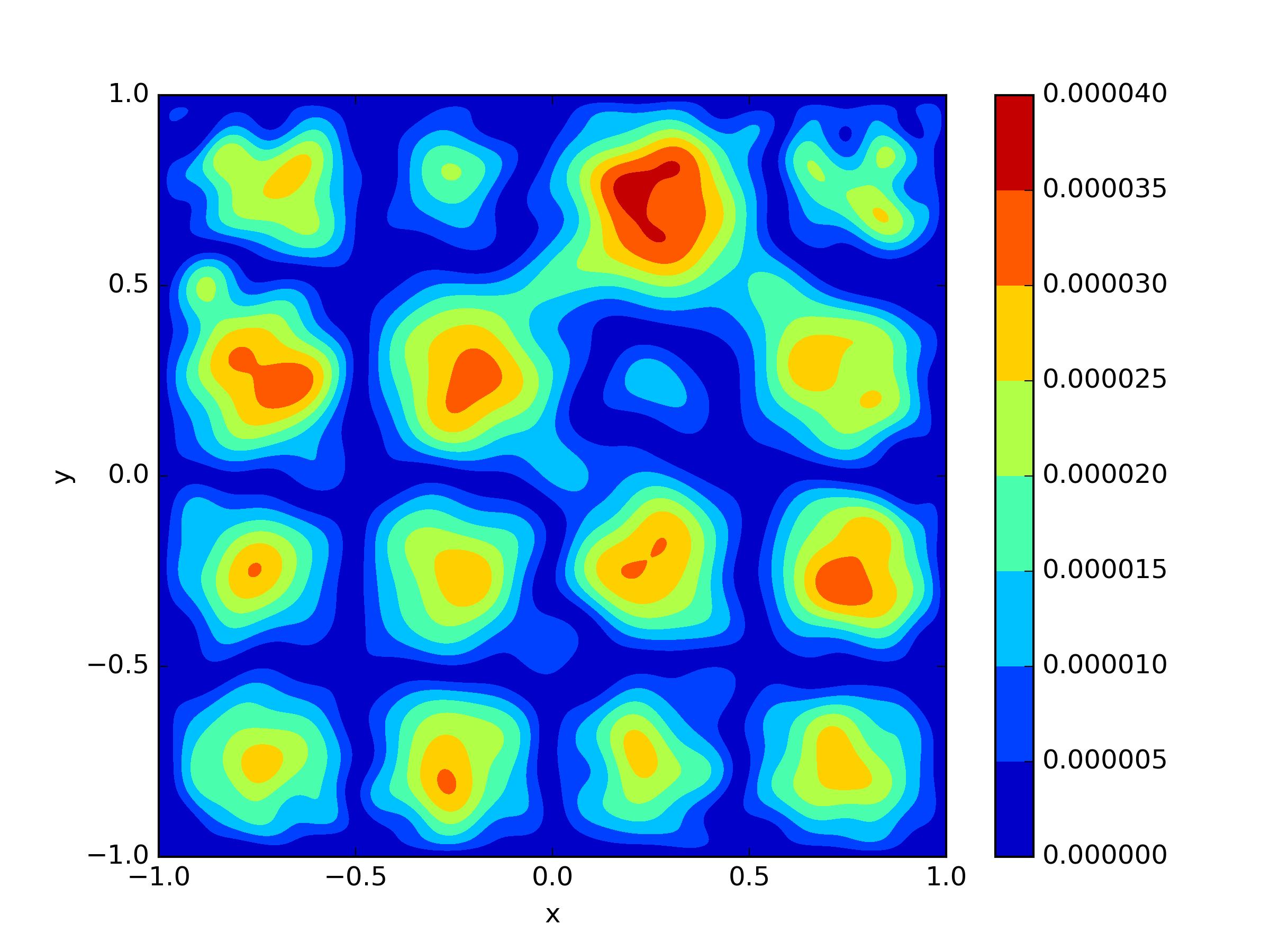}} 
\caption{The results of uniform random sampling  for the inverse problem of two-dimensional Helmholtz equation. (a) the approximate solution $u^{\text{UR}}$; (b) the absolute error $\vert u^* - u^{\text{UR}} \vert$.}
\label{fig:Helmholtz2D_UniformRandom}
\end{figure}

\begin{figure}[htbp]
\centering
\subfloat[approximate solution]{\includegraphics[width = 0.40\textwidth]
{./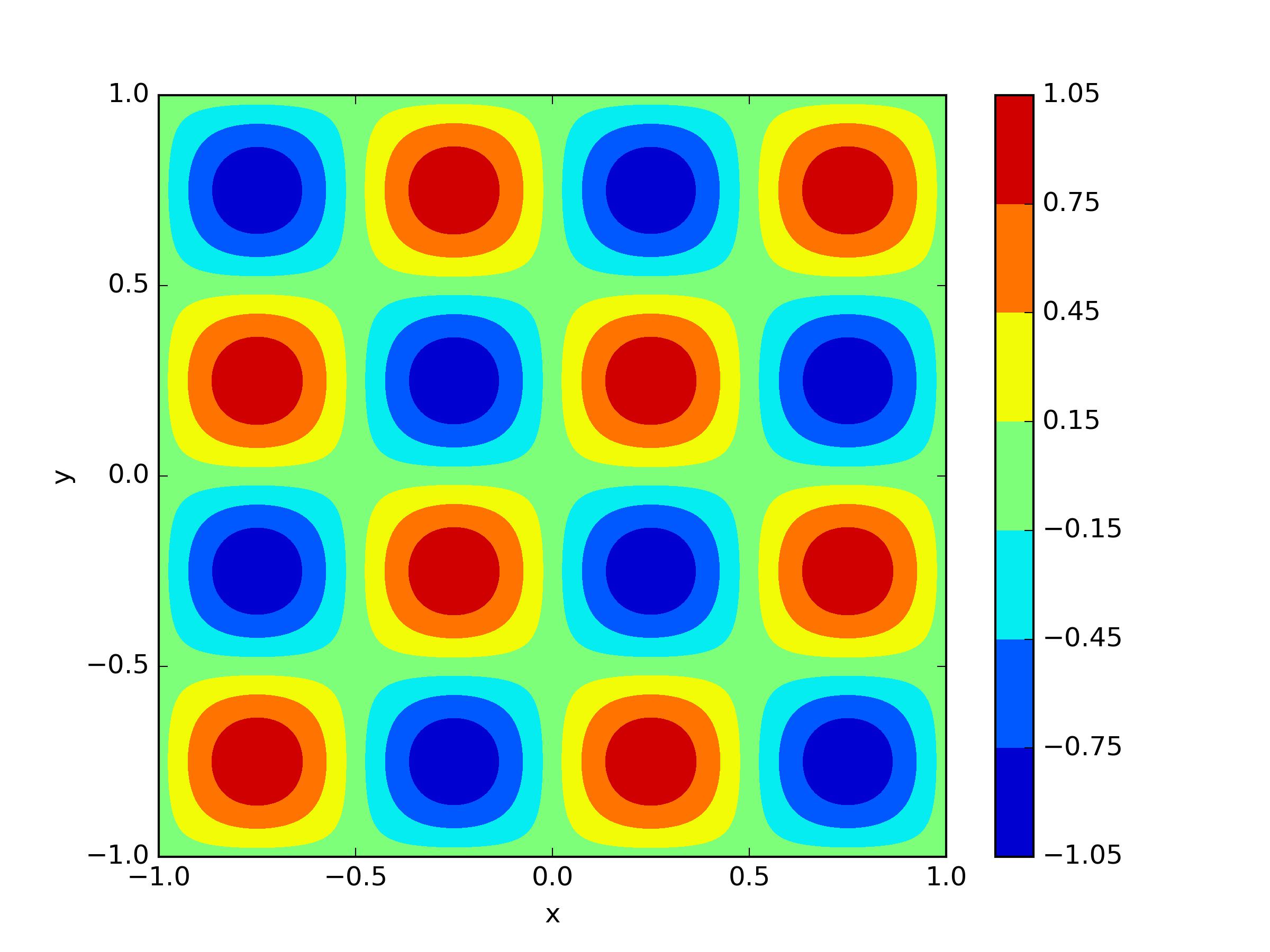}}
\subfloat[absolute error]{\includegraphics[width = 0.40\textwidth]{./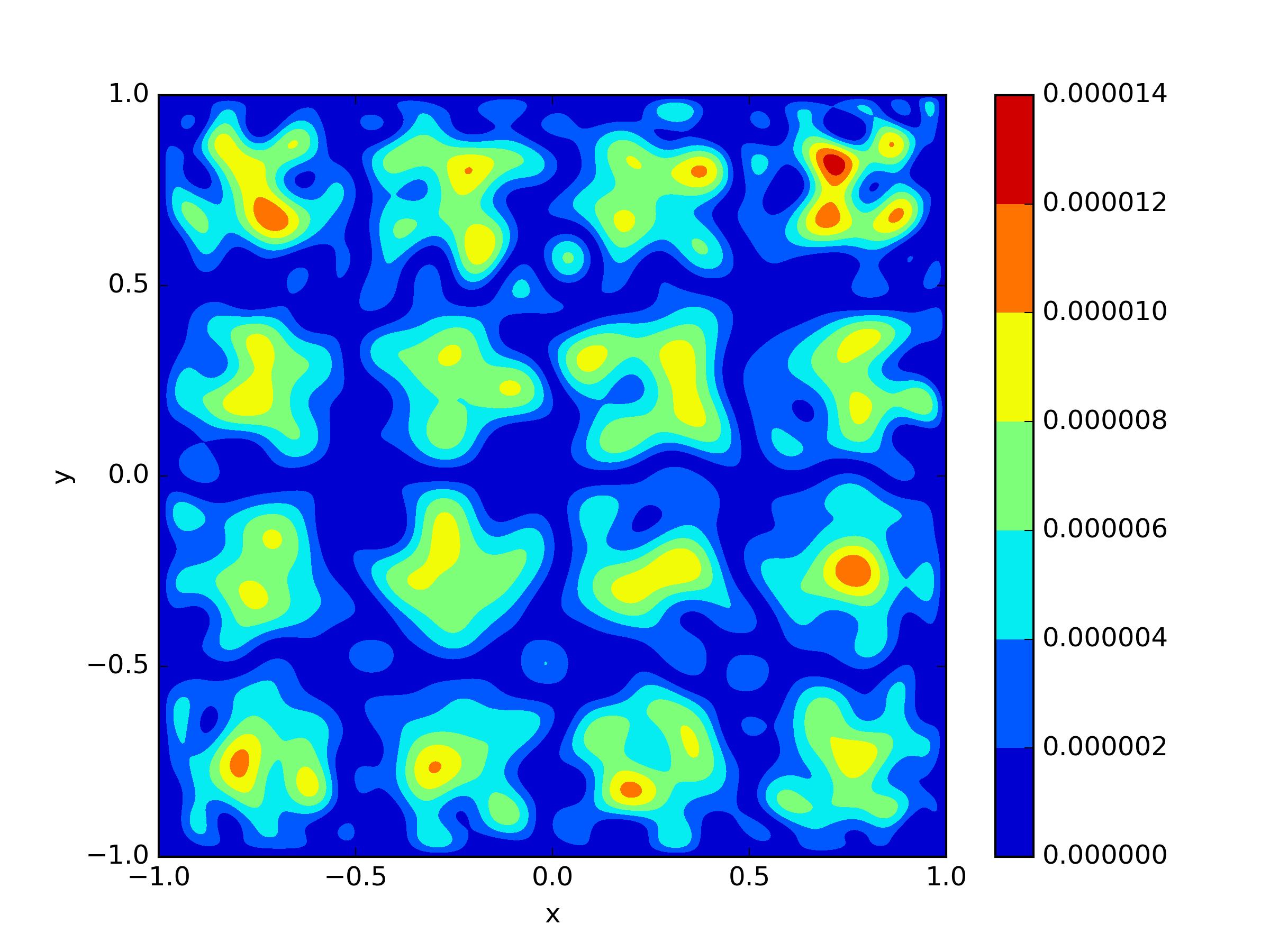}}
\caption{The results of GLP set  for the inverse problem of two-dimensional Helmholtz equation. (a) the approximate solution $u^{\text{GLP}}$; (b) the absolute error $\vert u^* - u^{\text{GLP}} \vert$.}
\label{fig:Helmholtz2D_NTM}
\end{figure}

\begin{figure}[htbp]
\centering
\subfloat[performance of $e_\infty(u)$]{\includegraphics[width = 0.30\textwidth]{./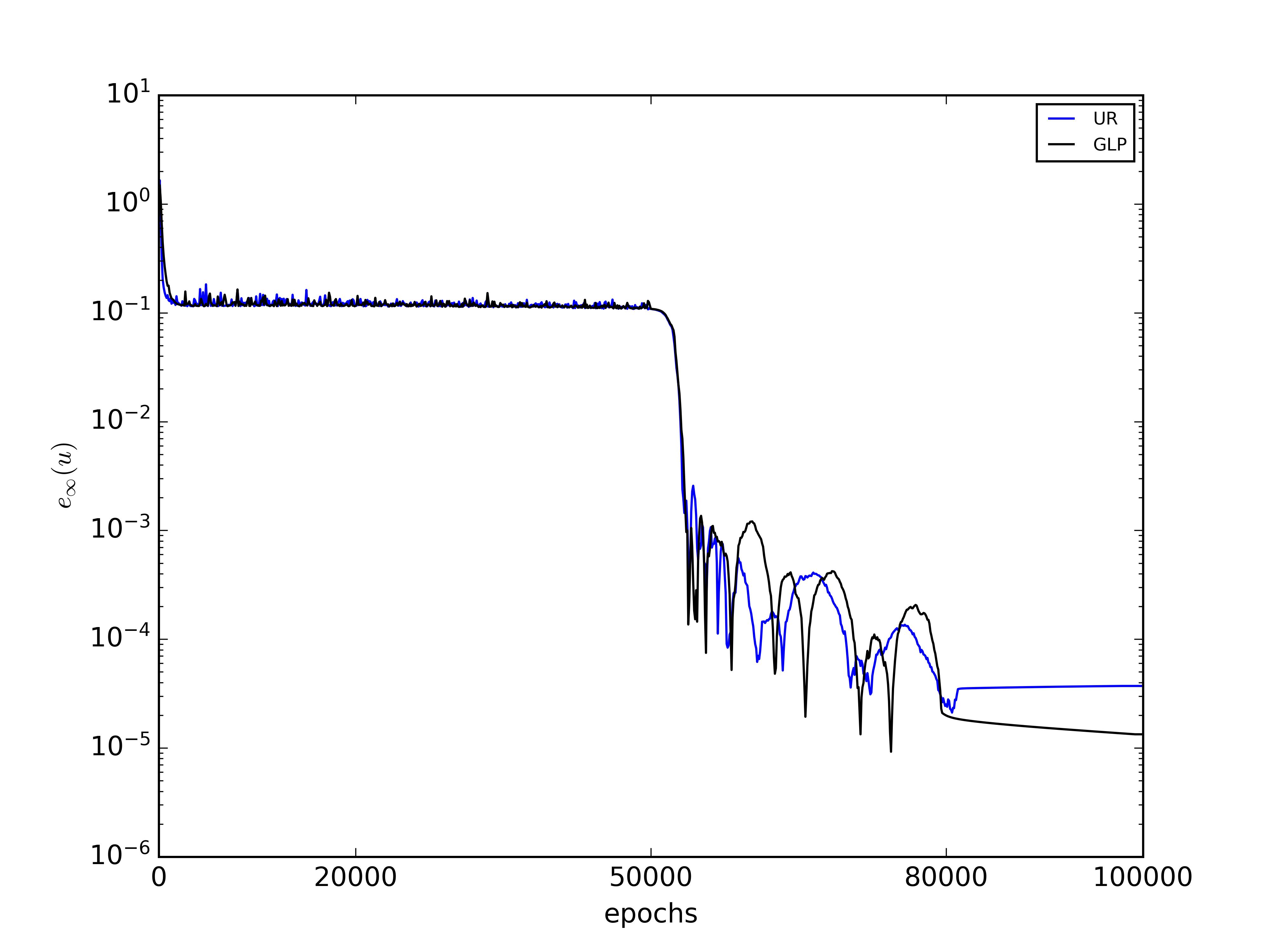}} \quad \quad 
\subfloat[performance of $e_2(u)$]{\includegraphics[width = 0.30\textwidth]
{./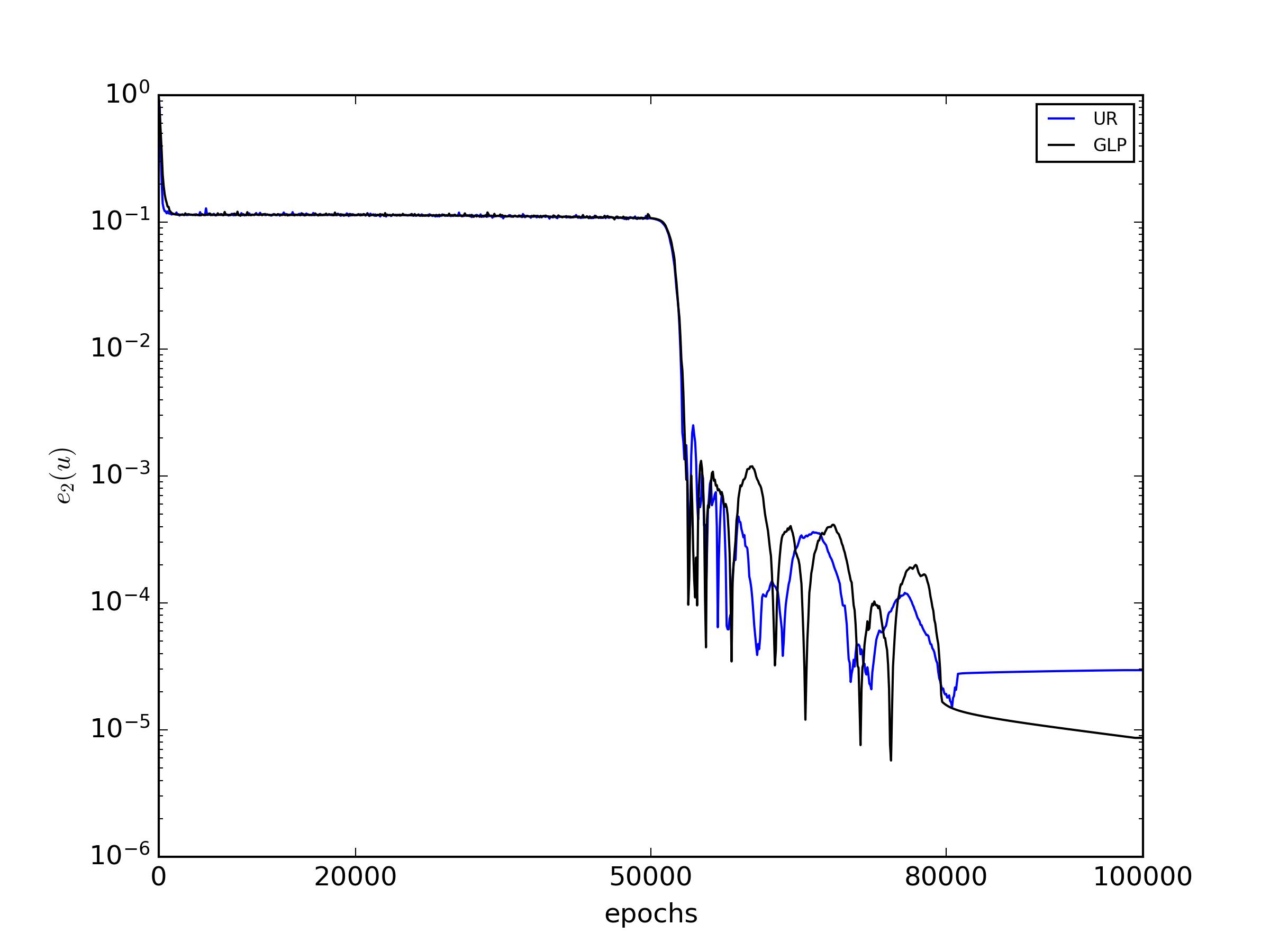}}\quad \quad 
\subfloat[performance of $\vert k - k^*\vert$]{\includegraphics[width = 0.30\textwidth]
{./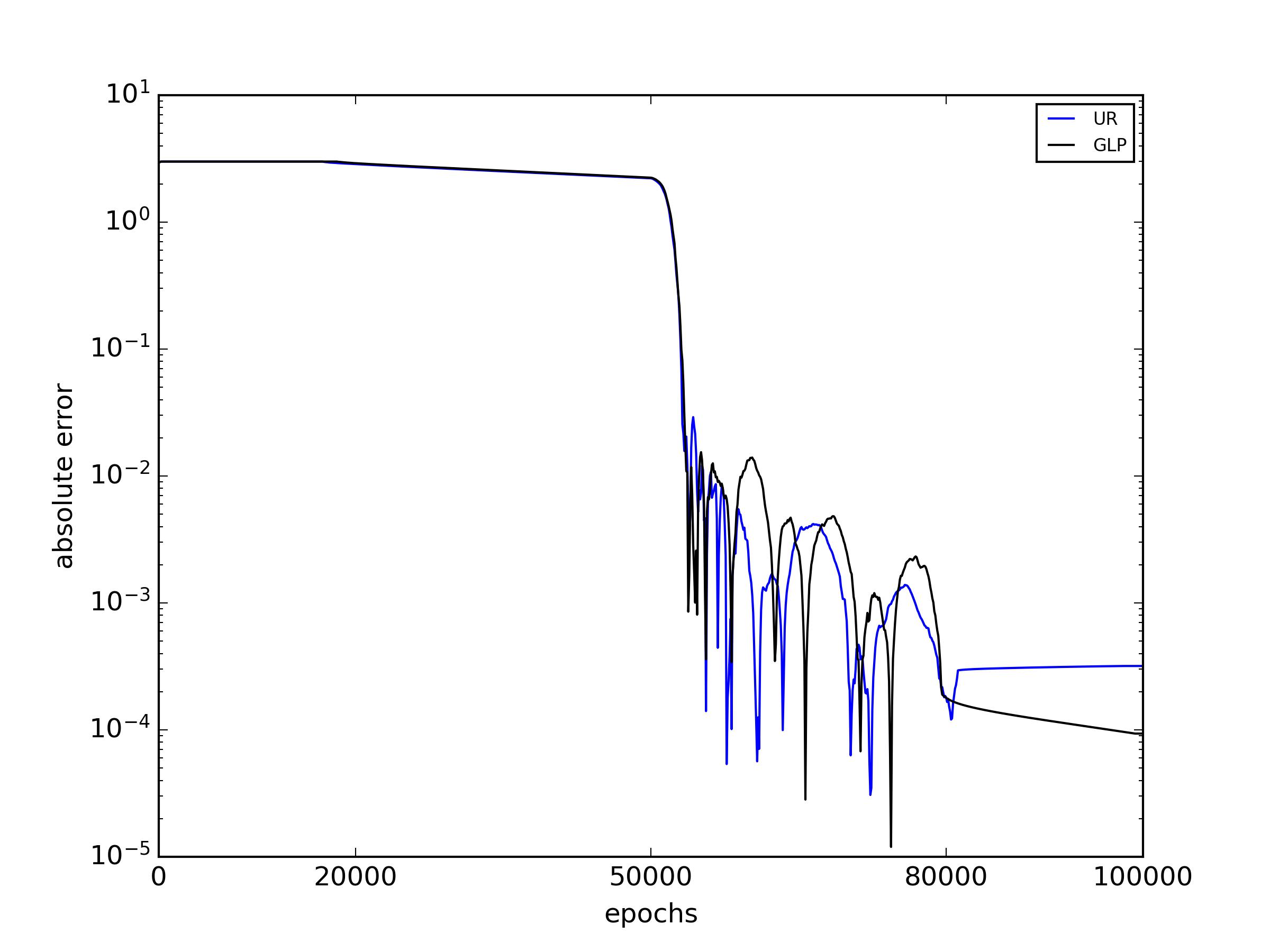}}
\caption{The performance of errors for the inverse problem of two-dimensional Helmholtz equation. (a) the relative error $e_\infty(u)$ with different training epochs; (b)  the relative error $e_2(u)$ with different training epochs; (c)  the absolute error $\vert k - k^*\vert$ with different training epochs.}
\label{fig:Helmholtz2D_ErrorEpochs}
\end{figure}

\begin{figure}[htbp]
\centering
\subfloat[performance of $e_\infty(u)$]{\includegraphics[width = 0.40\textwidth]{./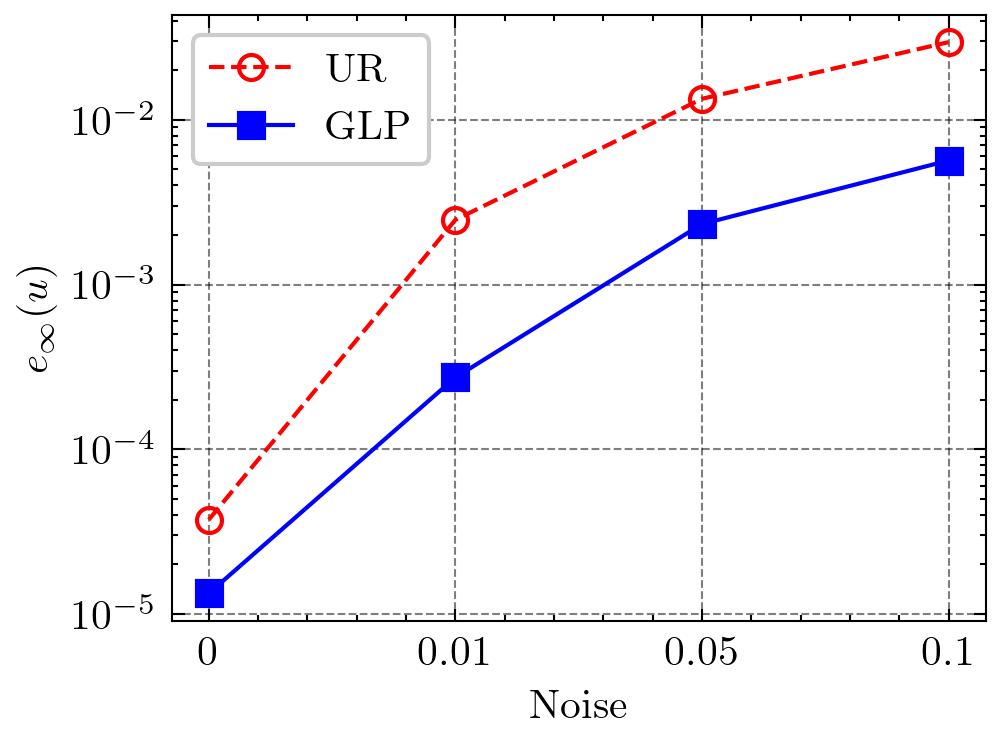}} \quad \quad 
\subfloat[performance of $e_2(u)$]{\includegraphics[width = 0.40\textwidth]
{./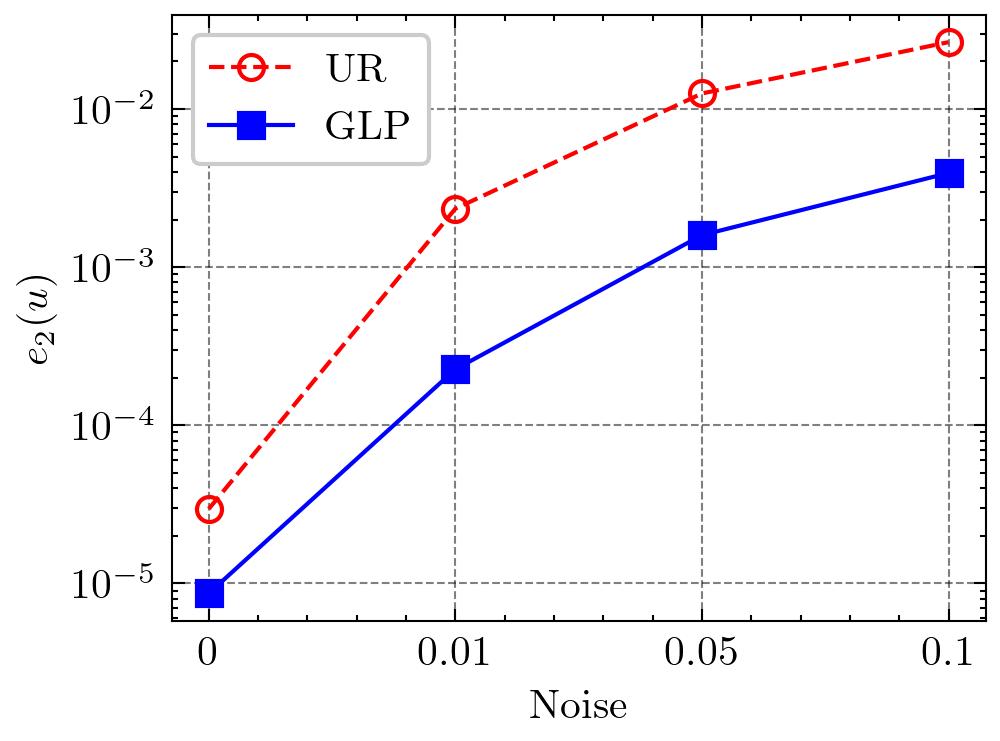}}\quad \quad 
\caption{The performance of errors under different noise scales for the inverse problem of two-dimensional Helmholtz equation. (a) the relative error $e_\infty(u)$ under different noise scales; (b)  the relative error $e_2(u)$ under different noise scales.}
\label{fig:Helmholtz2D_Noise}
\end{figure}

\begin{table}[h]
\scriptsize
\centering
\caption{
Comparison of errors between uniform random sampling and GLP sampling under different noise scales for the inverse problem of two-dimensional Helmholtz equation.
}
\setlength{\tabcolsep}{7.mm}{
\begin{tabular}{|c|c|c|c|}%{p{1cm}p{2.2cm}p{1.2cm}p{1.2cm}p{1.2cm}p{1.2cm}p{1.2cm}p{1.6cm}}
\hline\noalign{\smallskip}
methods \textbackslash errors     &  $e_\infty(u)$ & $e_2(u)$ & $\vert k -k^* \vert$ \\

\hline
UR  & $3.735 \times 10^{-5}$  & $2.9527 \times 10^{-5}$  &  $3.1859 \times 10^{-4}$ \\
\hline
GLP  & $1.3348 \times 10^{-5}$  & $8.6321 \times 10^{-6}$&
$9.3284\times 10^{-5}$  \\
\hline
UR (1\% Noise)  & $2.4694 \times 10^{-3}$  & $2.3433 \times 10^{-3}$  &  $2.7101 \times 10^{-2}$ \\
\hline
GLP (1\% Noise) & $2.7287 \times 10^{-4}$  & $2.2591 \times 10^{-4}$&
$2.6950\times 10^{-5}$  \\
\hline
UR  (5\% Noise)& $1.3404 \times 10^{-2}$  & $1.2549 \times 10^{-2}$  &  $1.4085 \times 10^{-1}$ \\
\hline
GLP  (5\% Noise)& $2.3285 \times 10^{-3}$  & $1.5940 \times 10^{-3}$&
$1.2436\times 10^{-2}$  \\
\hline
UR  (10\% Noise)& $2.9593 \times 10^{-2}$  & $2.6511 \times 10^{-2}$  &  $2.8508 \times 10^{-1}$ \\
\hline
GLP  (10\% Noise)& $5.6595 \times 10^{-3}$  & $3.9407 \times 10^{-3}$&
$3.4858\times 10^{-2}$  \\
\hline
\end{tabular}
}
\label{tab:Helmholtz_Results} 
\end{table}

\subsection{High-dimensional Linear Problems}
\label{sec:Poisson_HD}

For the following high-dimensional Poisson equation
\begin{equation}
	\label{eq:Hd_Poisson}
	\hspace{-0.3cm}
	\begin{array}{r@{}l}
		\left\{
		\begin{aligned}
			 -\Delta u(x) & = f(x), \quad x \ \mbox{in} \ \Omega, \\
   %= [-1,1]^2, \\
                  u(x) & = g(x),  \quad  x \ \mbox{on} \  \partial \Omega,
		\end{aligned}
		\right.
	\end{array}
\end{equation}
where $\Omega = (0,1)^d$, the  exact solution is defined by 
\begin{equation}
    \label{eq:PoissonHd_solution}
    \hspace{-0.3cm}
    \begin{array}{r@{}l}
        \begin{aligned}
            u = e^{-p\Vert \bx \Vert_2^2}.
        \end{aligned}
    \end{array}
\end{equation}
The Dirichlet boundary condition $g(x)$ on $\partial \Omega$ and function $f(x)$ are given by the exact solution.
%\subsubsection{Five-dimensional Problem}
%\label{sec:Poisson_5D}
We take $d=5$ and $p=10$ in Eq \eqref{eq:Hd_Poisson} and \eqref{eq:PoissonHd_solution}, and sample 10007 points in $\Omega$  and 100 points in each hyperplane on $\partial \Omega$ as the training set and 200000 points as the test set.  In order to validate the statement in Remark \ref{rem:mk_lemma1}, we specifically choose the Sigmoid activation function in this subsection. %(section \ref{sec:Poisson_5D} and \ref{sec:Poisson_8D}).
%The settings of main parameters in PINNs are given in Table \ref{tab:parameter-PINN}.
%Since visualizing training points in the high-dimensional case is a challenging task, we only plot 
The distribution of the residual training points in the first two dimensions is plotted in Fig \ref{fig:PoissonHd_points}.
%\begin{comment}
\begin{figure}[htbp]
\centering
\subfloat[Uniform random points]{\includegraphics[width = 0.45\textwidth]{./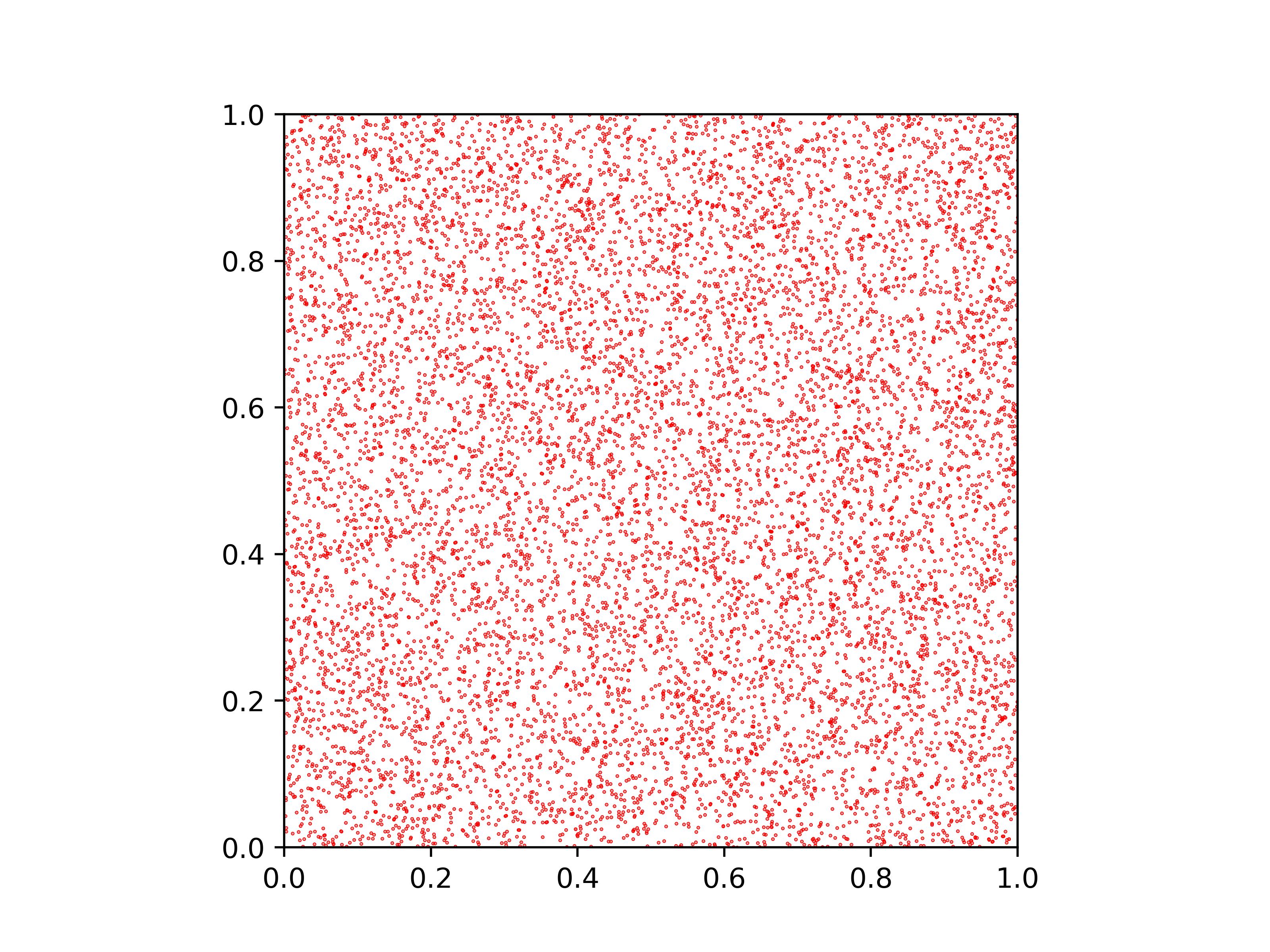}} \quad \quad 
\subfloat[GLP sampling]{\includegraphics[width = 0.45\textwidth]
{./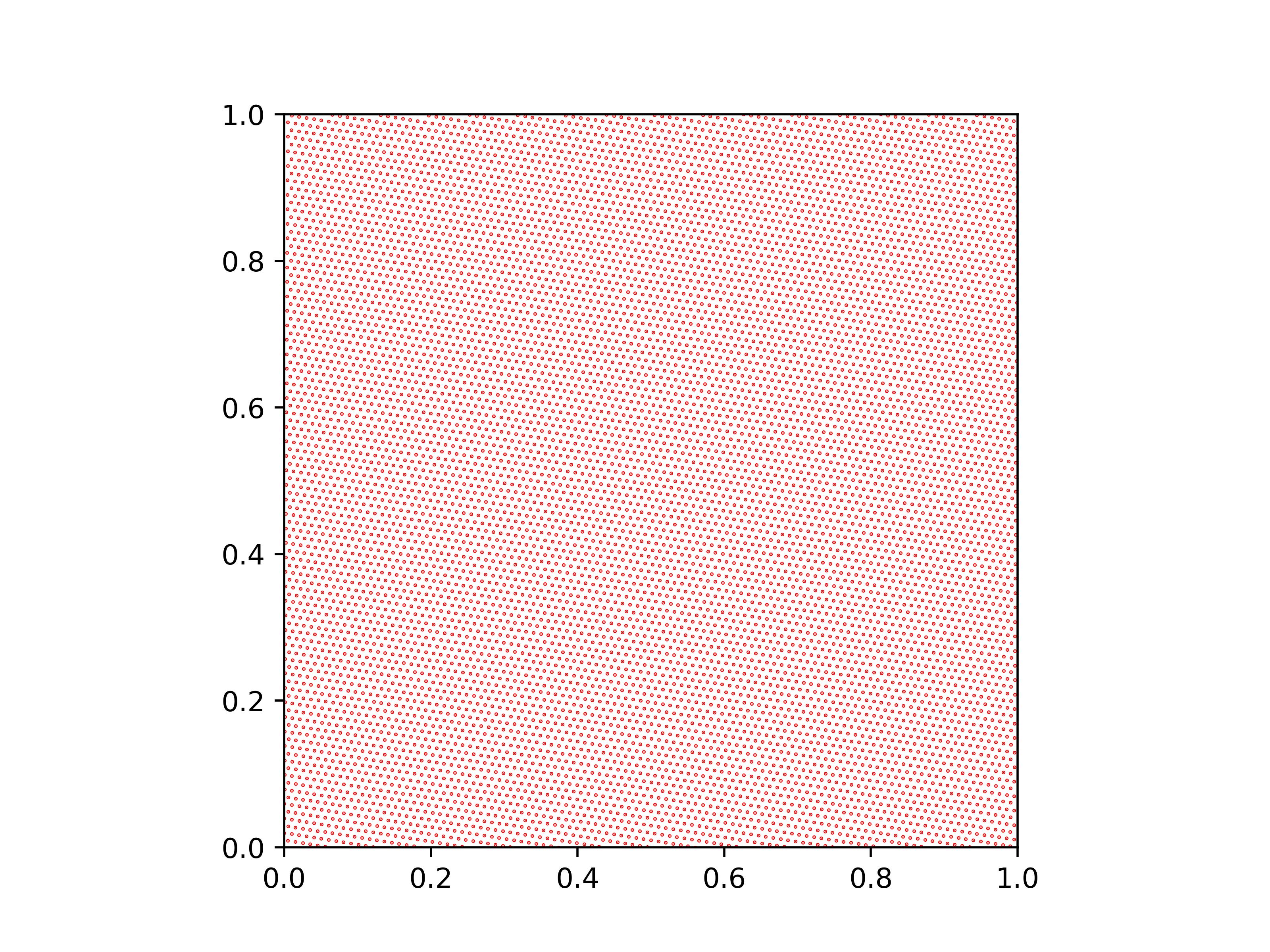}}
\caption{The training points for the five-dimensional linear problem. (a) the residual training points by uniform random sampling; (b)  the residual training points of GLP sampling.}
\label{fig:PoissonHd_points}
\end{figure}
%\end{comment}

In Fig \ref{fig:PoissonHd_ErrorEpochs}, after 50000 Adam training epochs and 20000 LBFGS training epochs, we present a comparative analysis of the numerical results derived from the GLP sampling versus those obtained through uniform random sampling. This comparison aims to demonstrate the efficacy of our proposed method for the same number of training points. Furthermore, taking into account the number of training points, we devise various sampling strategies grounded on the quantity of uniform random samples and conduct controlled experiments with the GLP sampling.  The experimental results conclusively demonstrate that the GLP sampling retains a distinct advantage over uniform random sampling, even when the latter employs approximately five times the number of points. The details of the different sampling strategies are summarized in Table \ref{tab:PoissonHd_PointsSettings}.

\begin{figure}[htbp]
\centering
\subfloat[performance of $e_\infty(u)$]{\includegraphics[width = 0.45\textwidth]{./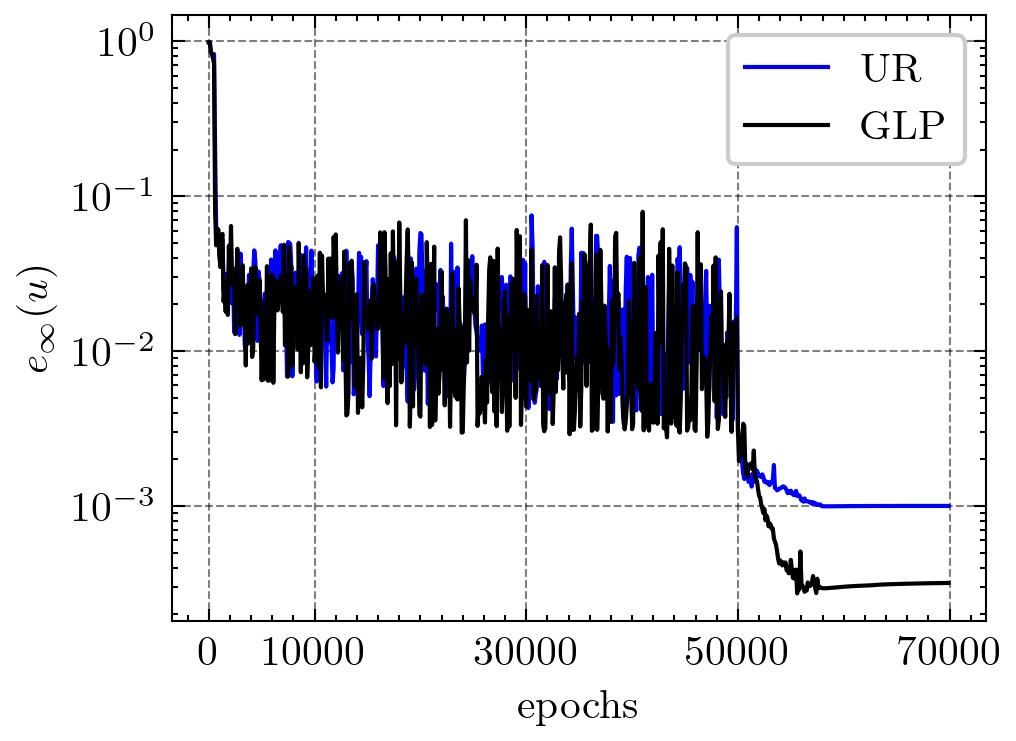}} \quad \quad 
\subfloat[performance of $e_2(u)$]{\includegraphics[width = 0.45\textwidth]
{./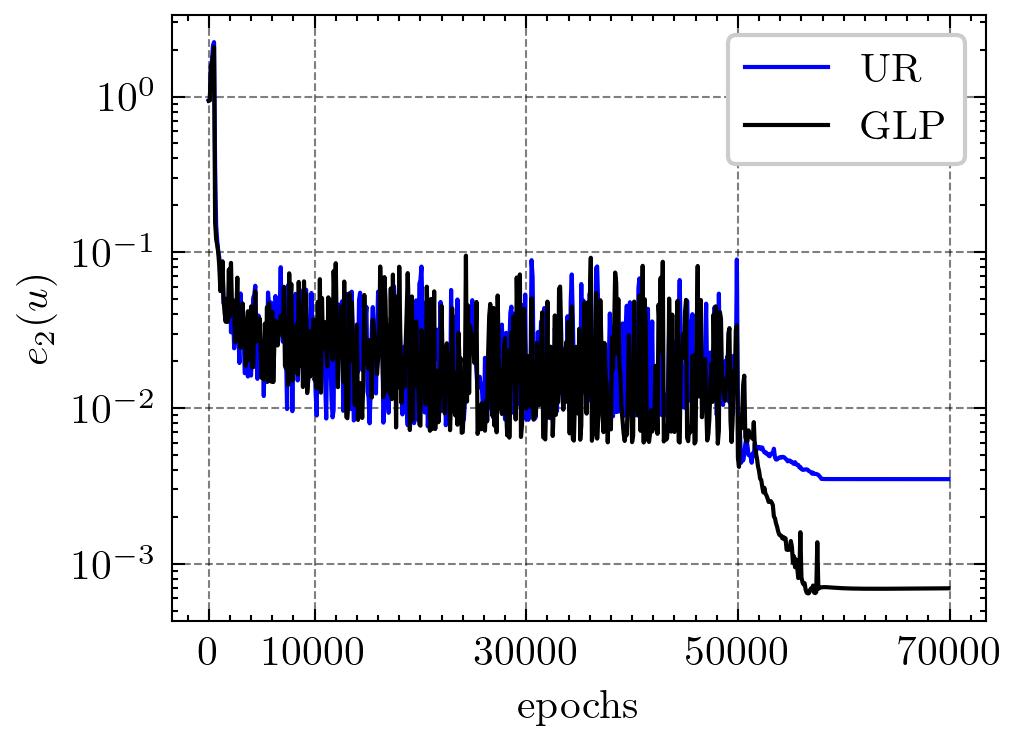}}
\caption{The performance of errors for the five-dimensional linear problem. (a) the relative error $e_\infty(u)$ during training process; (b) the relative error $e_2(u)$ during training process.}
\label{fig:PoissonHd_ErrorEpochs}
\end{figure}

\begin{table}[h]
\scriptsize
\centering
\caption{
The number of residual points in $\Omega$ and the number of boundary points at each hyperplane on $\partial \Omega$ using sampling strategies for the five-dimensional linear problem.
}
\setlength{\tabcolsep}{5.mm}{
\begin{tabular}{|c|c|c|c|c|}%{p{1cm}p{2.2cm}p{1.2cm}p{1.2cm}p{1.2cm}p{1.2cm}p{1.2cm}p{1.6cm}}
\hline\noalign{\smallskip}
strategy   &  1 &  2 &3 & 4 \\
\hline
method  & UR    & UR & UR & GLP\\
\hline
residual points  & 10007 & 33139 & 51097 & 10007\\
\hline
each hyperplane   & 100   & 100 & 100 &100\\
\hline
$\be_2(\bu)$ &$1.890\times 10^{-3}$ & $1.257\times 10^{-3}$ &  $1.172\times 10^{-3}$ & $6.950\times 10^{-4}$ \\
\hline
$\be_\infty(\bu)$ &  $ 4.755\times 10^{-4}$&  $4.197\times 10^{-4}$& $3.819\times 10^{-4}$ & $3.182\times 10^{-4}$\\
\hline
\end{tabular}
}
\label{tab:PoissonHd_PointsSettings} 
\end{table}

% \begin{table}[h]
% \scriptsize
% \centering
% \caption{
% The number of residual points in $\Omega$ and the number of boundary points at each hyperplane that composes $\partial \Omega$ under sampling strategies.
% }
% \setlength{\tabcolsep}{5.mm}{
% \begin{tabular}{|c|c|c|c|c|c|}%{p{1cm}p{2.2cm}p{1.2cm}p{1.2cm}p{1.2cm}p{1.2cm}p{1.2cm}p{1.6cm}}
% \hline\noalign{\smallskip}
% Strategy   &  1 &  2 &3 & 4 & 5\\
% \hline
% Method  & UR  & UR  & UR & UR & GLP\\
% \hline
% Residual points  & 10007 & 15019 & 33139 & 51097 & 10007\\
% \hline
% Each hyperplane   & 100  & 100 & 100 & 100 &100\\
% \hline
% $\be_2(\bu)$& $3.491 \times 10^{-2}$ &$1.890\times 10^{-3}$ & $1.257\times 10^{-3}$ &  $1.172\times 10^{-3}$ & $6.950\times 10^{-4}$ \\
% \hline
% $\be_\infty(\bu)$& $1.000\times 10^{-3}$ &  $ 4.755\times 10^{-4}$&  $4.197\times 10^{-4}$& $3.819\times 10^{-4}$ & $3.182\times 10^{-4}$\\
% \hline
% \end{tabular}
% }
% \label{tab:PoissonHd_PointsSettings} 
% \end{table}

%\subsubsection{Eight-dimensional Problem}
%\label{sec:Poisson_8D}
Next, we take $d=8$ and $p=1$ in Eq \eqref{eq:Hd_Poisson} and \eqref{eq:PoissonHd_solution}, and sample 11215 points in $\Omega$  and 200 points in each hyperplane on $\partial \Omega$ as the training set and 500000 points as the test set.  
%The settings of main parameters in PINNs are the same as Section \ref{sec:Poisson_5D}.
%Since visualizing training points in the high-dimensional case is a challenging task, we only plot the distribution of these 11215 residual training points in the first two dimensions in Fig \ref{fig:Poisson8d_points}.
The distribution of these %10007 
residual training points in the first two dimensions is plotted in Fig \ref{fig:Poisson8d_points}.
\begin{figure}[htbp]
\centering
\subfloat[Uniform random points]{\includegraphics[width = 0.45\textwidth]{./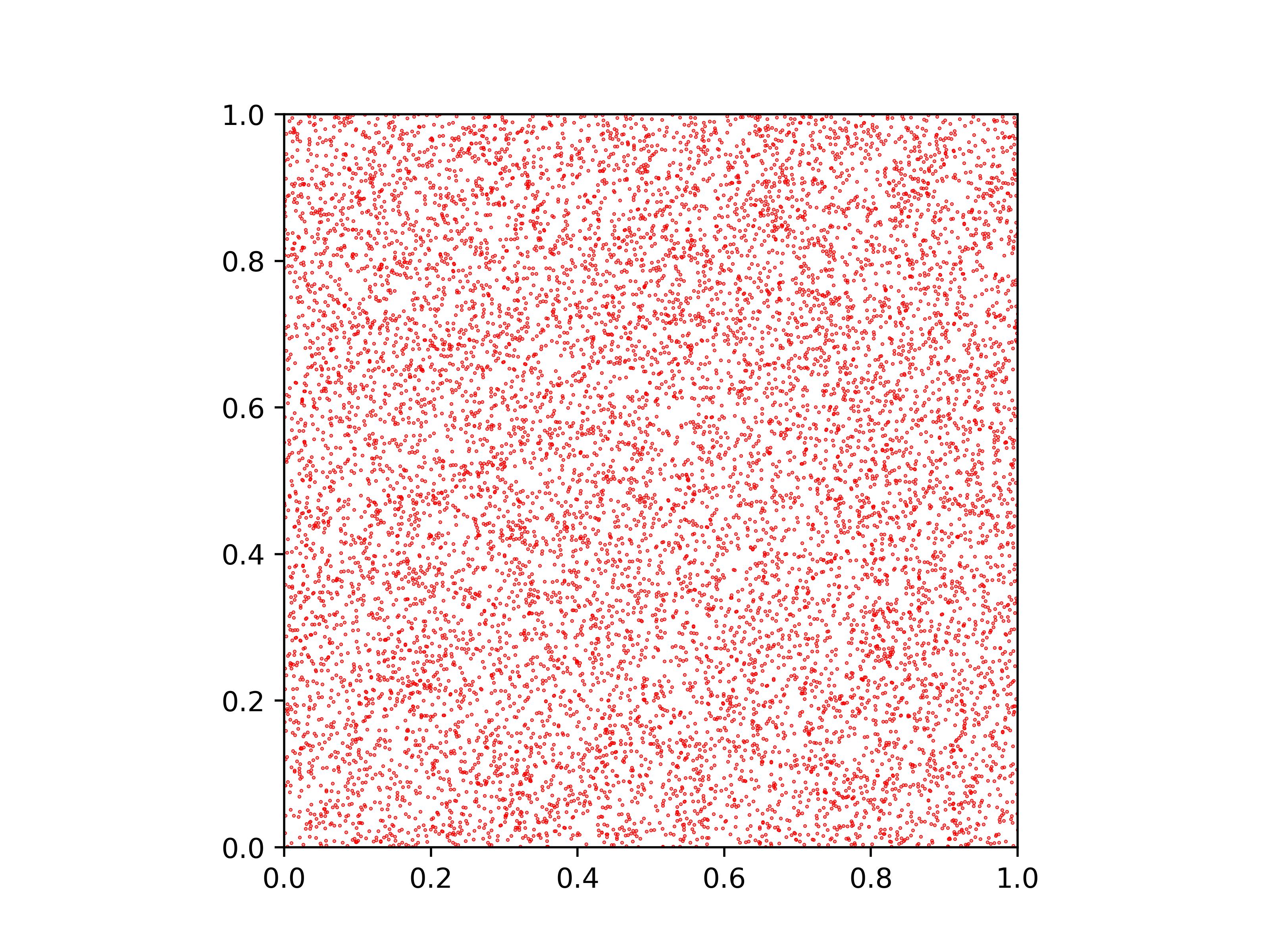}} \quad \quad 
\subfloat[GLP sampling points]{\includegraphics[width = 0.45\textwidth]
{./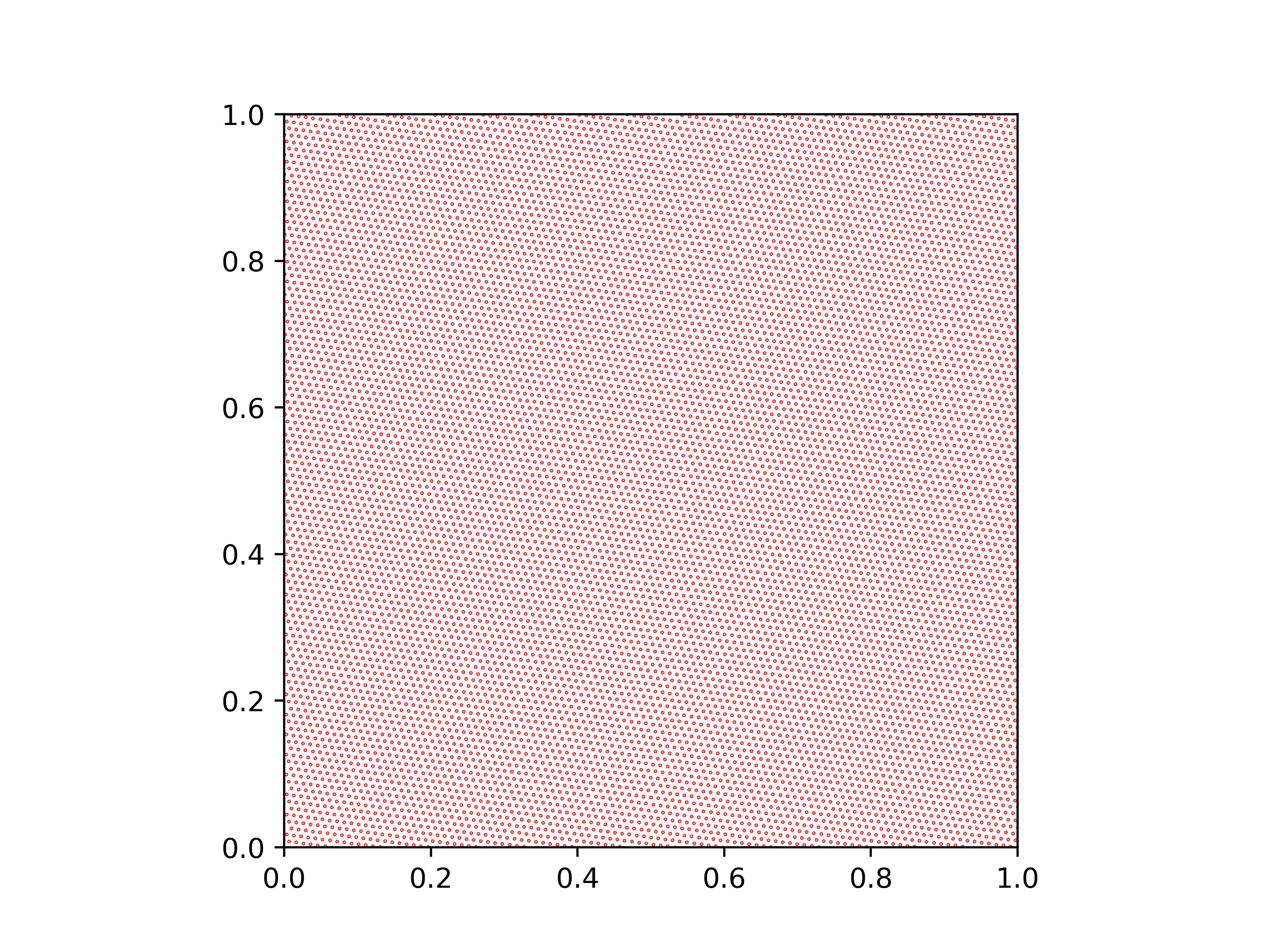}}
\caption{The training points for the eight-dimensional linear problem. (a) the residual training points by uniform random sampling; (b)  the residual training points of GLP sampling.}
\label{fig:Poisson8d_points}
\end{figure}

% In Fig \ref{fig:Poisson8d_ErrorEpochs}, after 50000 epochs of Adam training and 2000 epochs of LBFGS training, we compare the numerical results based on GLP set with those of uniform random sampling to show the effectiveness of our method.
In Fig \ref{fig:Poisson8d_ErrorEpochs}, after 50000 epochs of Adam training and 20000 epochs of LBFGS training, we undertake a comparative analysis of the numerical results derived from the GLP sampling versus those obtained through uniform random sampling. %The purpose of this comparison is to highlight the effectiveness of our proposed method when utilizing an equivalent number of training points.
%Moreover,  taking into account the number of training points, we devise various sampling strategies grounded on the quantity of uniform random samples and conduct controlled experiments with the GLP set to validate these strategies. The experimental results conclusively prove that the GLP set maintains a clear advantage over uniform random sampling. Even though the number of points in uniform random sampling is three times that of the GLP set, the accuracy of the GLP set remains comparable to it.
The details the different sampling strategies are summarized in Table \ref{tab:Poisson8d_PointsSettings}, which shows the effectiveness of our method.

\begin{figure}[htbp]
\centering
\subfloat[performance of $e_\infty(u)$]{\includegraphics[width = 0.45\textwidth]{./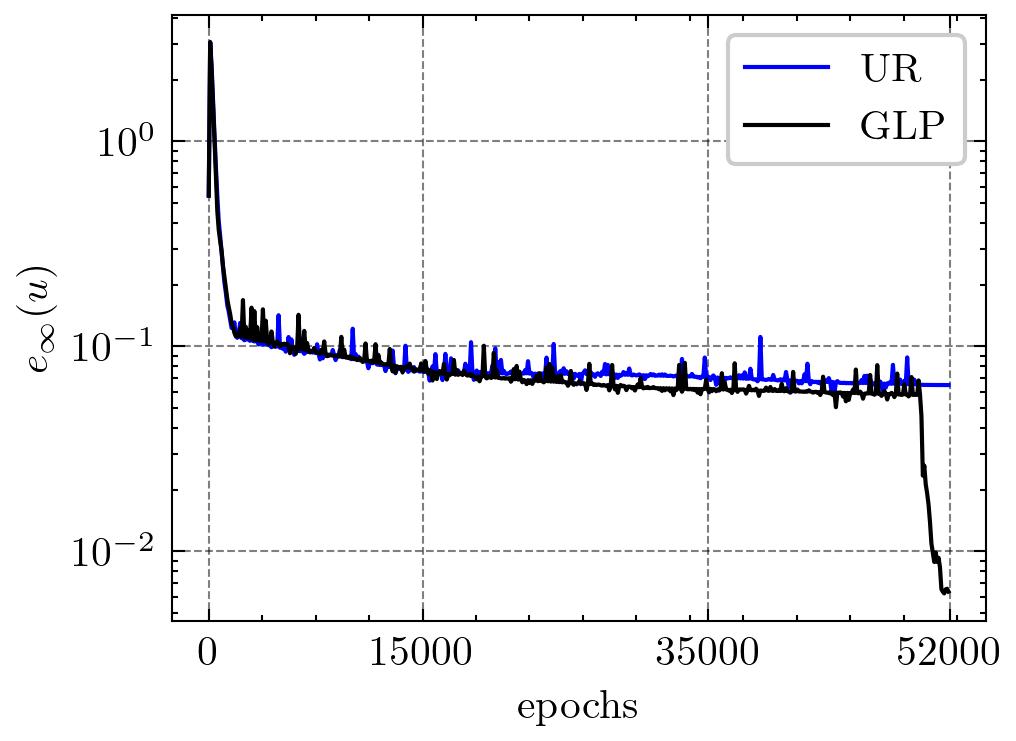}} \quad \quad 
\subfloat[performance of $e_2(u)$]{\includegraphics[width = 0.45\textwidth]
{./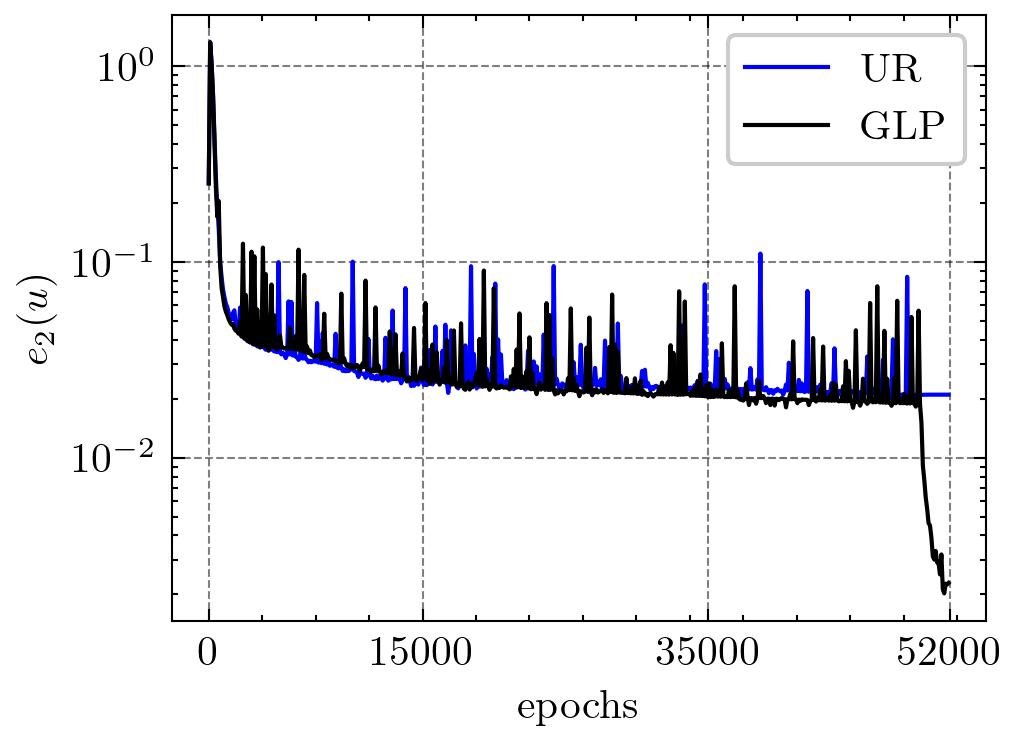}}
\caption{The performance of errors for the eight-dimensional linear problem. (a) the relative error $e_\infty(u)$ during training process; (b) the relative error $e_2(u)$ during training process.}
\label{fig:Poisson8d_ErrorEpochs}
\end{figure}

% In this subsection, we  also conduct controlled experiments under under different sampling strategies. 
% The details and results of the different sampling strategies are summarized in Table \ref{tab:Poisson8d_PointsSettings}.

\begin{table}[h]
\scriptsize
\centering
\caption{
The number of residual points in $\Omega$ and the number of boundary points at each hyperplane on $\partial \Omega$ using sampling strategies for the eight-dimensional linear problem.
}
\setlength{\tabcolsep}{5.mm}{
\begin{tabular}{|c|c|c|c|c|}%{p{1cm}p{2.2cm}p{1.2cm}p{1.2cm}p{1.2cm}p{1.2cm}p{1.2cm}p{1.6cm}}
\hline\noalign{\smallskip}
strategy   &  1 &  2 &3  & 4\\
\hline
method  & UR  & UR  & UR  & GLP\\
\hline
residual points  & 11215 & 24041  & 33139 & 11215\\
\hline
each hyperplane   & 200  & 200  & 200 & 200\\
\hline
$\be_2(\bu)$& $2.096 \times 10^{-2}$ &$1.883\times 10^{-2}$  &  $1.172\times 10^{-3}$ & $2.287\times 10^{-3}$ \\
\hline  
$\be_\infty(\bu)$& $6.481\times 10^{-2}$ &  $ 5.798\times 10^{-2}$& $5.272\times 10^{-3}$ & $6.369\times 10^{-3}$\\
\hline  
\end{tabular}
}
\label{tab:Poisson8d_PointsSettings} 
\end{table}

\subsection{High-dimensional Nonlinear Problems}
For the following high-dimensional nonlinear equation
\begin{equation}
	\label{eq:Hd_nonlinear}
	\hspace{-0.3cm}
	\begin{array}{r@{}l}
		\left\{
		\begin{aligned}
			 -\Delta u(x) + u^3(x) & = f(x), \quad x \ \mbox{in} \ \Omega, \\
   %= [-1,1]^2, \\
                  u(x) & = g(x),  \quad  x \ \mbox{on} \  \partial \Omega,
		\end{aligned}
		\right.
	\end{array}
\end{equation}
where $\Omega = (0,1)^d$, the exact solution is defined by 
\begin{equation}
    \label{eq:NonlinearHd_solution}
    \hspace{-0.3cm}
    \begin{array}{r@{}l}
        \begin{aligned}
            u = sin(\frac{k\pi}{d}\sum_{i=1}^d x_i).
        \end{aligned}
    \end{array}
\end{equation}
The source function $f(x)$  and the  Dirichlet boundary condition $g(x)$ are given by the exact solution Eq \eqref{eq:NonlinearHd_solution}.
%\subsubsection{Five-dimensional Problem}
%\label{sec:nonlinear_5D}
We take $d=5$ and $k=4$ in Eq \eqref{eq:NonlinearHd_solution}, and sample 3001 points in $\Omega$  and 100 points in each hyperplane on $\partial \Omega$ as the training set and 200000 points as the test set.  
%       The settings of main parameters in PINNs are given in Table \ref{tab:parameter-PINN}. 
%Visualizing training points in the high-dimensional context poses a significant challenge, hence we restrict our plotting to the distribution of the first two dimensions for these 11215 residual training points, as depicted in 
The distribution of the residual training points in the first two dimensions is plotted in Fig \ref{fig:NonlinearHd_points}.

\begin{figure}[htbp]
\centering
\subfloat[Uniform random points]{\includegraphics[width = 0.45\textwidth]{./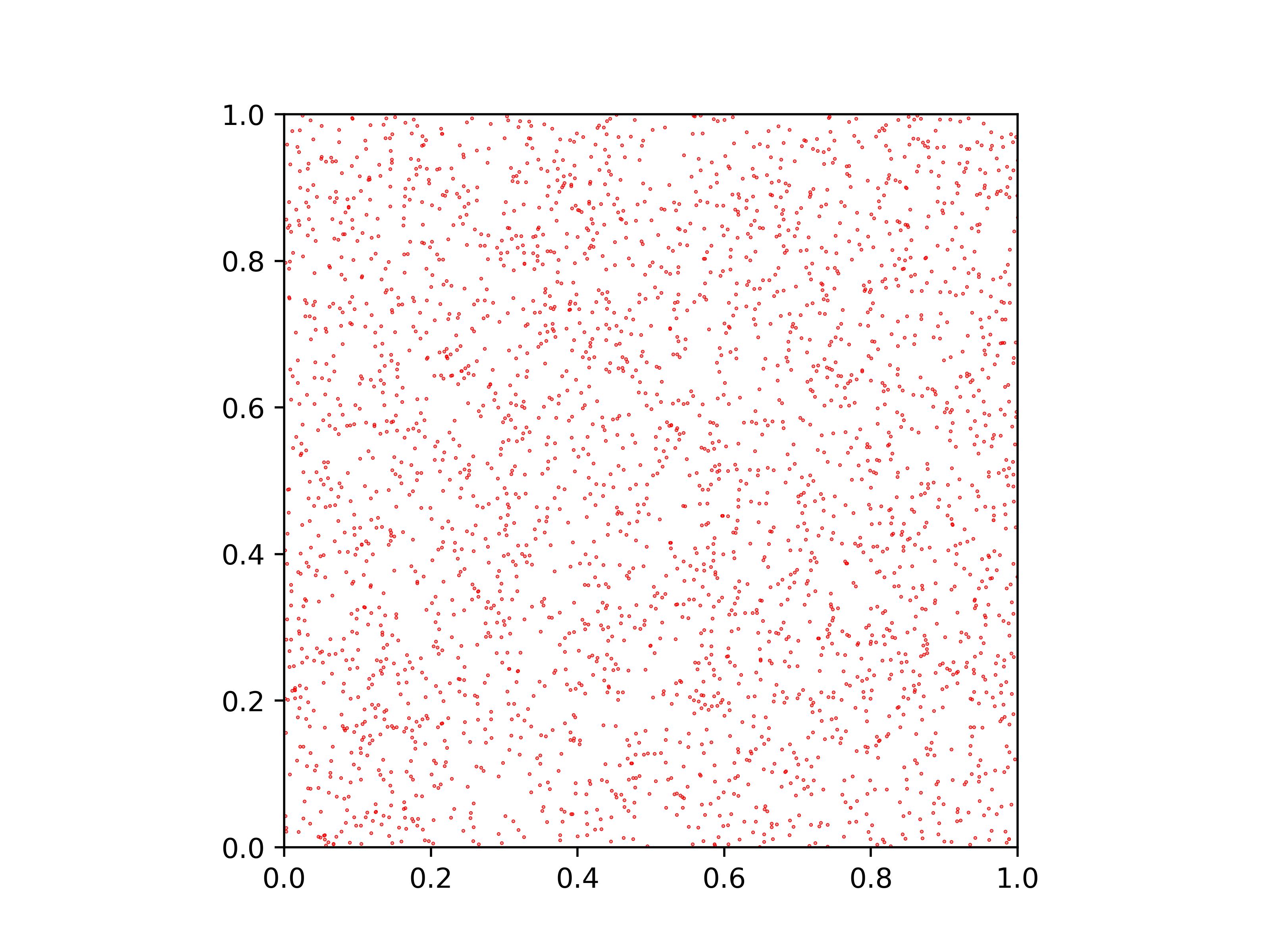}} \quad \quad 
\subfloat[GLP sampling points]{\includegraphics[width = 0.45\textwidth]
{./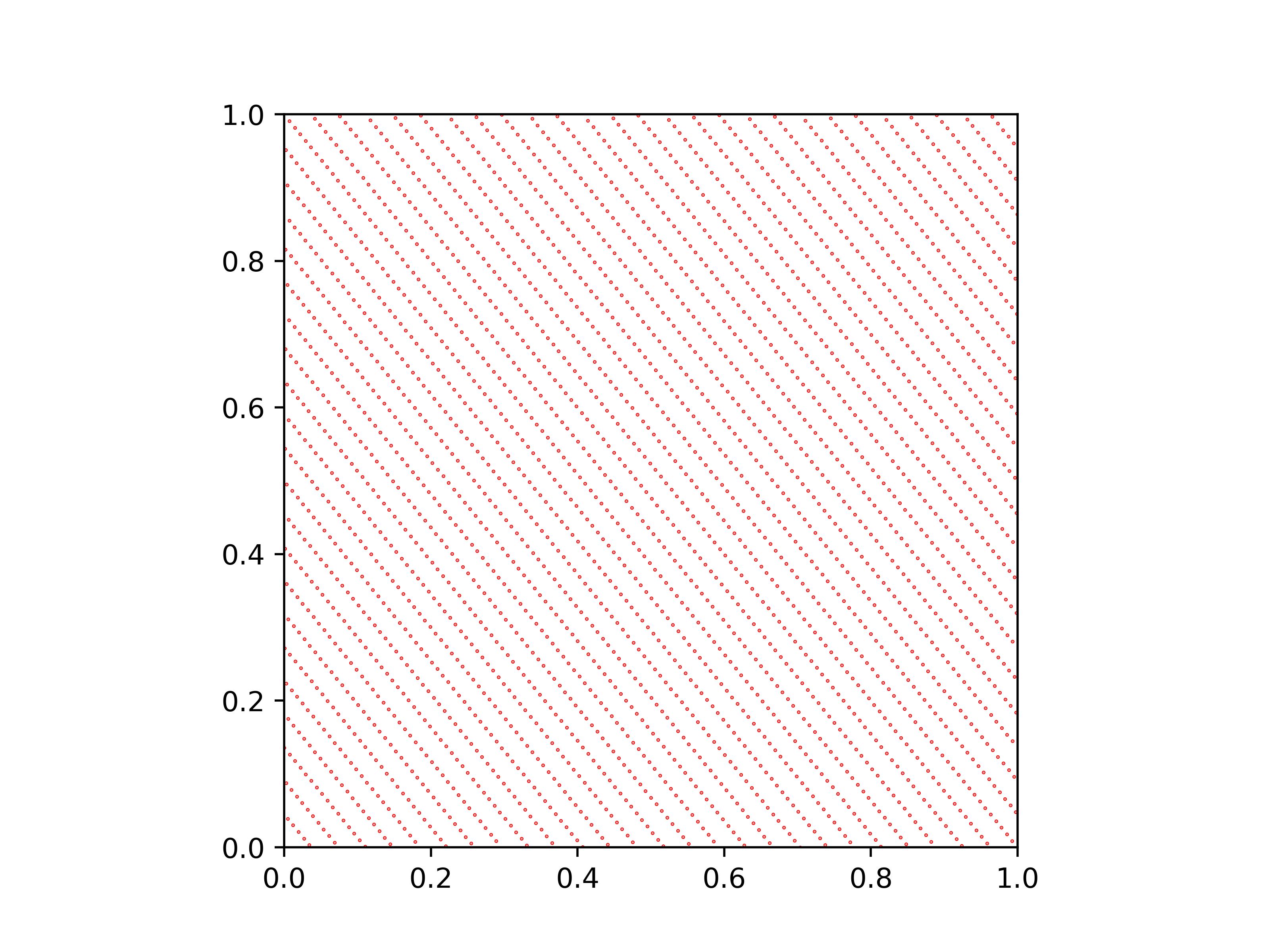}}
\caption{The training points for the five-dimensional nonlinear problem. (a) the residual training points by uniform random sampling; (b)  the residual training points of GLP sampling.}
\label{fig:NonlinearHd_points}
\end{figure}

In Figure \ref{fig:NonlinearHd_ErrorEpochs}, we compare the numerical results derived from GLP sampling with those from uniform random sampling after 7500 epochs of Adam training and 10000 epochs of LBFGS training. This comparison aims to demonstrate the efficiency of our method for the same number of training points.

\begin{figure}[htbp]
\centering
\subfloat[performance of $e_\infty(u)$]{\includegraphics[width = 0.45\textwidth]{./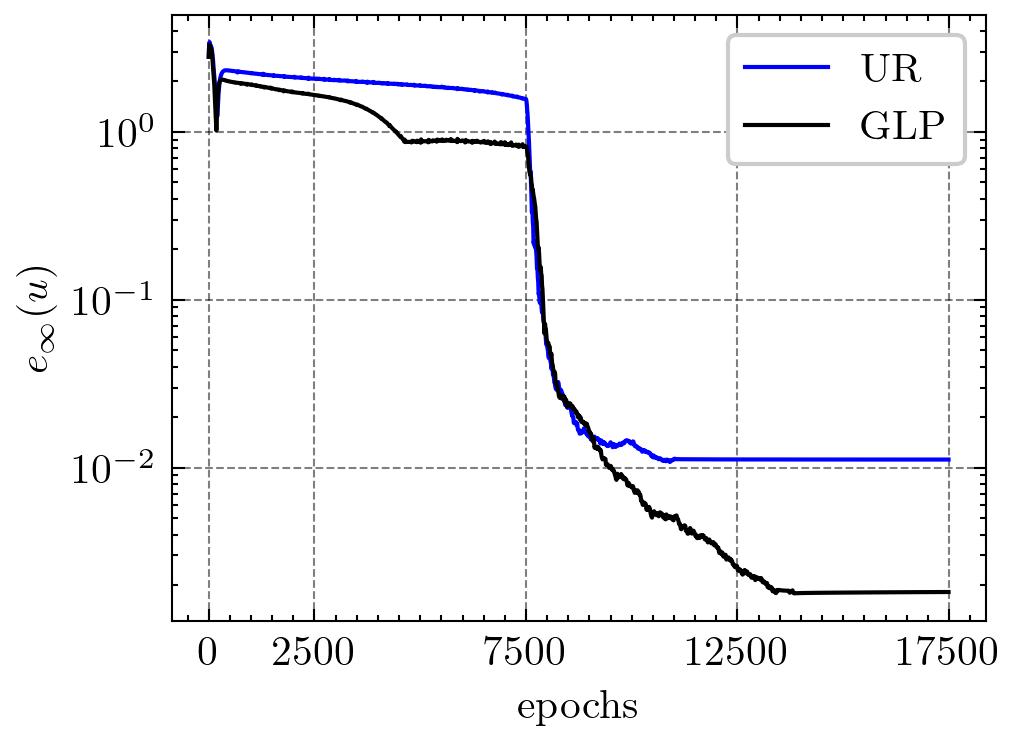}} \quad \quad 
\subfloat[performance of $e_2(u)$]{\includegraphics[width = 0.45\textwidth]
{./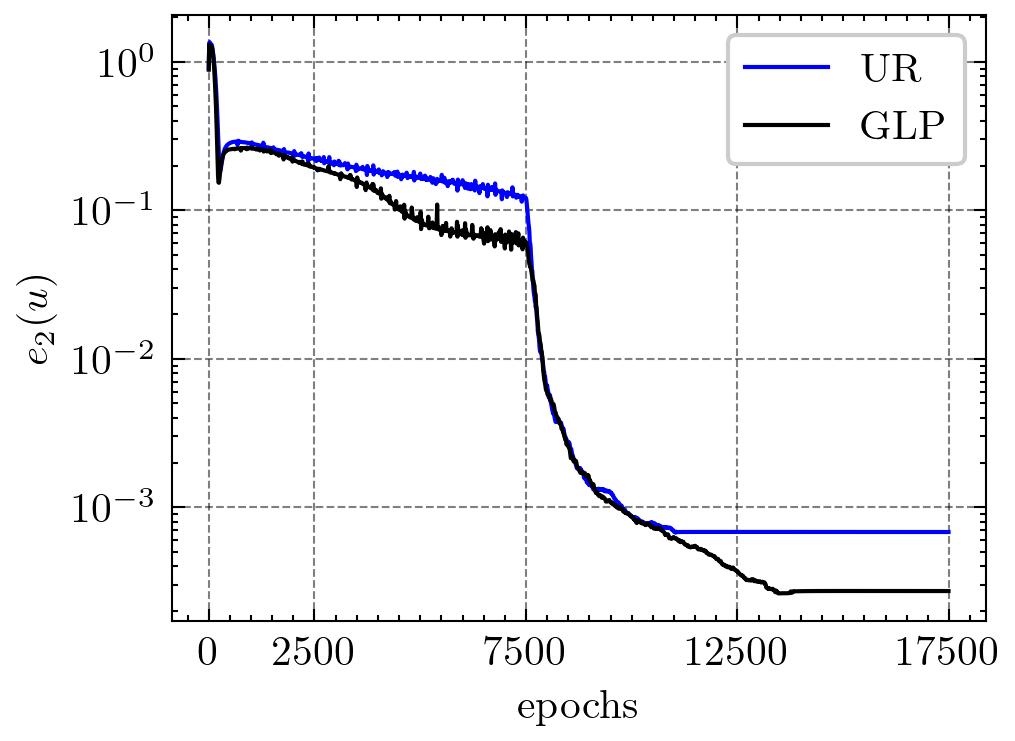}}
\caption{The performance of errors for the five-dimensional nonlinear problem. (a) the relative error $e_\infty(u)$ during training process; (b) the  relative error $e_2(u)$ during training process.}
\label{fig:NonlinearHd_ErrorEpochs}
\end{figure}

%Similar to the section \ref{sec:Poisson_HD}, 
We also developed different training strategies by the number of training points and conducted controlled experiments with the GLP sampling.
The results of these different sampling strategies are comprehensively summarized in Table \ref{tab:NonlinearHd_PointsSettings}. The experimental results conclusively demonstrate that the GLP sampling possesses a clear advantage over uniform random sampling, even when the latter utilizes approximately five times the number of training points.

\begin{table}[h]
\scriptsize
\centering
\caption{
The number of residual points in $\Omega$ and the number of boundary points at each hyperplane on  $\partial \Omega$ using sampling strategies for the five-dimensional nonlinear problem.}
\setlength{\tabcolsep}{5.mm}{
\begin{tabular}{|c|c|c|c|c|}%{p{1cm}p{2.2cm}p{1.2cm}p{1.2cm}p{1.2cm}p{1.2cm}p{1.2cm}p{1.6cm}}
\hline\noalign{\smallskip}
strategy   &  1 &  2 &3 & 4\\
\hline
method  & UR  & UR  & UR  & GLP\\
\hline
residual points  & 3001 & 5003 & 15019  & 3001\\
\hline
each hyperplane   & 100  & 100 & 100  &100\\
\hline
$\be_2(\bu)$& $ 6.813 \times 10^{-4}$ &$5.075\times 10^{-4}$ & $4.027\times 10^{-4}$ &  $2.728\times 10^{-4}$  \\
\hline
$\be_\infty(\bu)$& $1.118\times 10^{-2}$ &  $3.669\times 10^{-3}$&  $3.013\times 10^{-3}$& $1.818\times 10^{-3}$ \\
\hline
\end{tabular}
}
\label{tab:NonlinearHd_PointsSettings} 
\end{table}

%\subsubsection{Eight-dimensional Problem}
Next, we take $d=8$ and $k=3$ in Eq \eqref{eq:NonlinearHd_solution}, and sample 11215 points in $\Omega$  and 200 points in each hyperplane on $\partial \Omega$ as the training set and 500000 points as the test set. Moreover, we adjust the weights in the loss function so that $\alpha_1= 10$  and $\alpha_2= 1$.  
The other settings of main parameters in PINNs are the same as before.%Section \ref{sec:nonlinear_5D}.
The distribution of the residual training points in the first two dimensions is plotted in Fig \ref{fig:Nonlinear8d_points}.

%Visualizing training points in the high-dimensional context poses a significant challenge, hence we restrict our plotting to the distribution of the first two dimensions for these 11215 residual training points, as depicted in Fig \ref{fig:Nonlinear8d_points}.

\begin{figure}[htbp]
\centering
\subfloat[Uniform random points]{\includegraphics[width = 0.45\textwidth]{./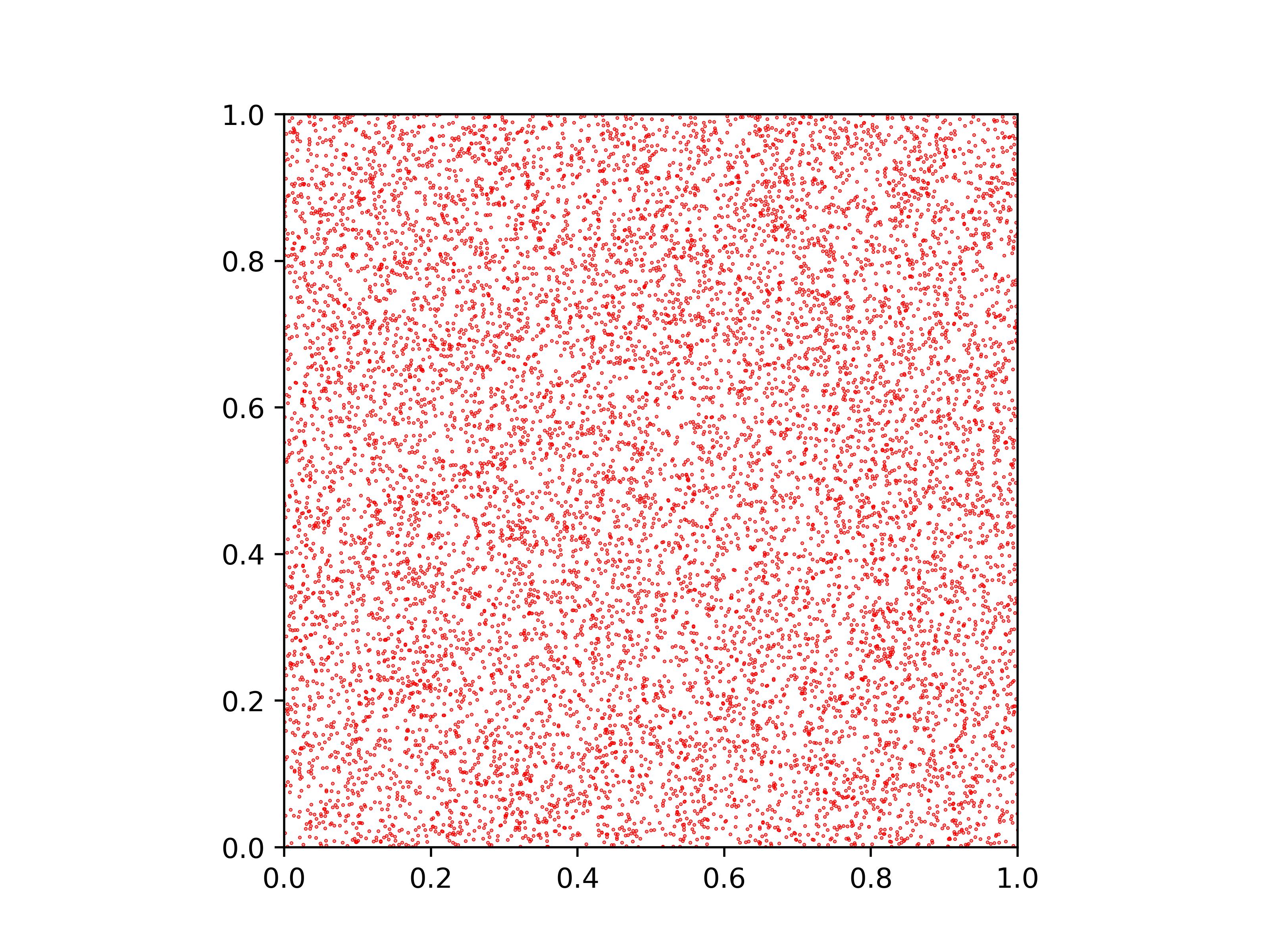}} \quad \quad 
\subfloat[GLP sampling points]{\includegraphics[width = 0.45\textwidth]
{./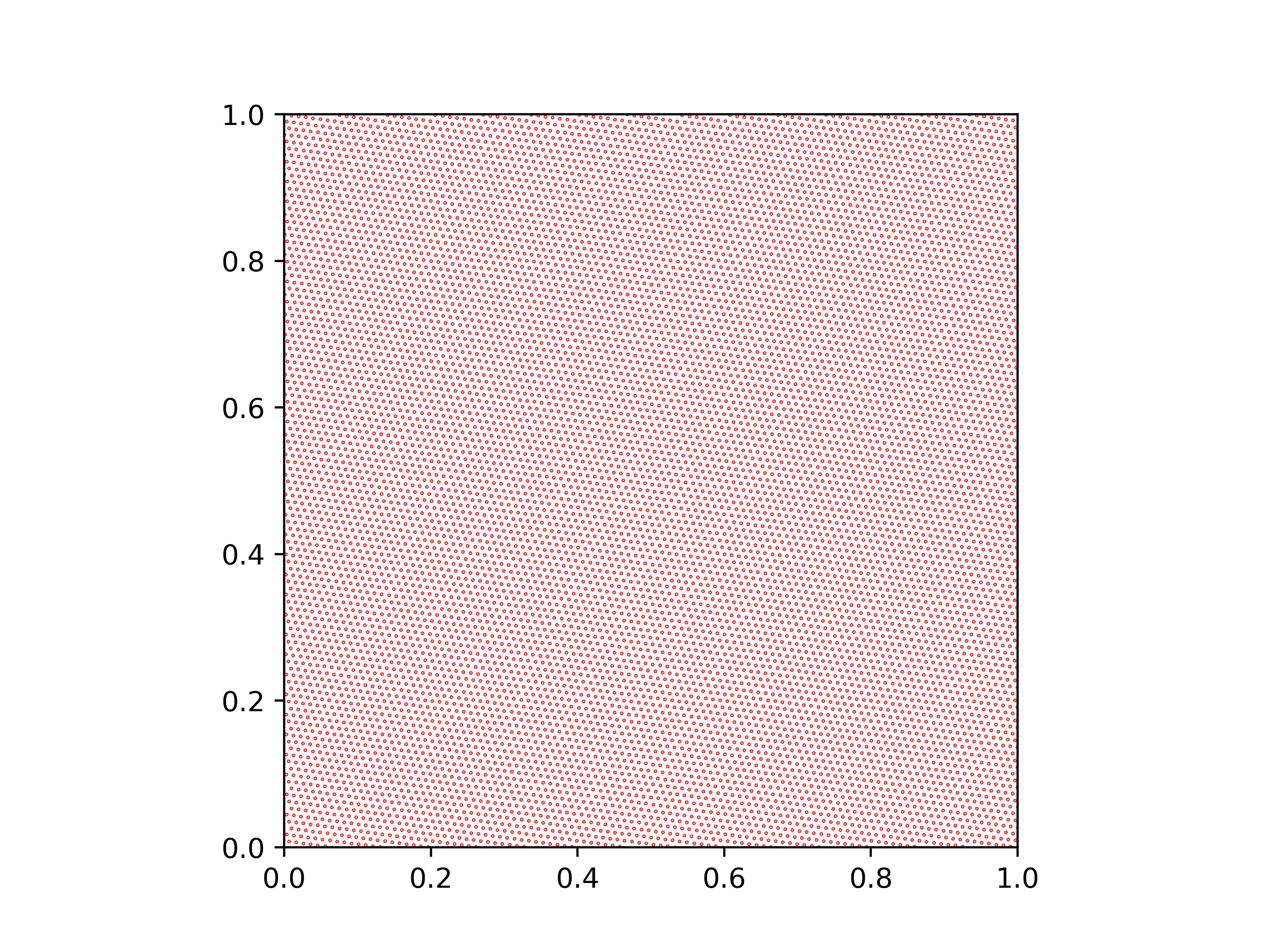}}
\caption{The performance of errors for the eight-dimensional nonlinear problem. (a) the residual training points by uniform random sampling; (b)  the residual training points of GLP sampling.}
\label{fig:Nonlinear8d_points}
\end{figure}

In Figure \ref{fig:Nonlinear8d_ErrorEpochs}, we compare the numerical results derived from GLP sampling with those from uniform random sampling after 20000 epochs of Adam training and 20000 epochs of LBFGS training. This comparison aims to show the effectiveness of our method for the same number of training points. We also developed different training strategies by the number of training points and conducted controlled experiments with the GLP sampling.
The results of these different sampling strategies are comprehensively summarized in Table \ref{tab:Nonlinear8d_PointsSettings}. The experimental results conclusively demonstrate that our method is effective.
%the GLP set possesses a clear advantage over uniform random sampling, even when the latter utilizes approximately four times the number of training points.

\begin{figure}[htbp]
\centering
\subfloat[performance of $e_\infty(u)$]{\includegraphics[width = 0.45\textwidth]{./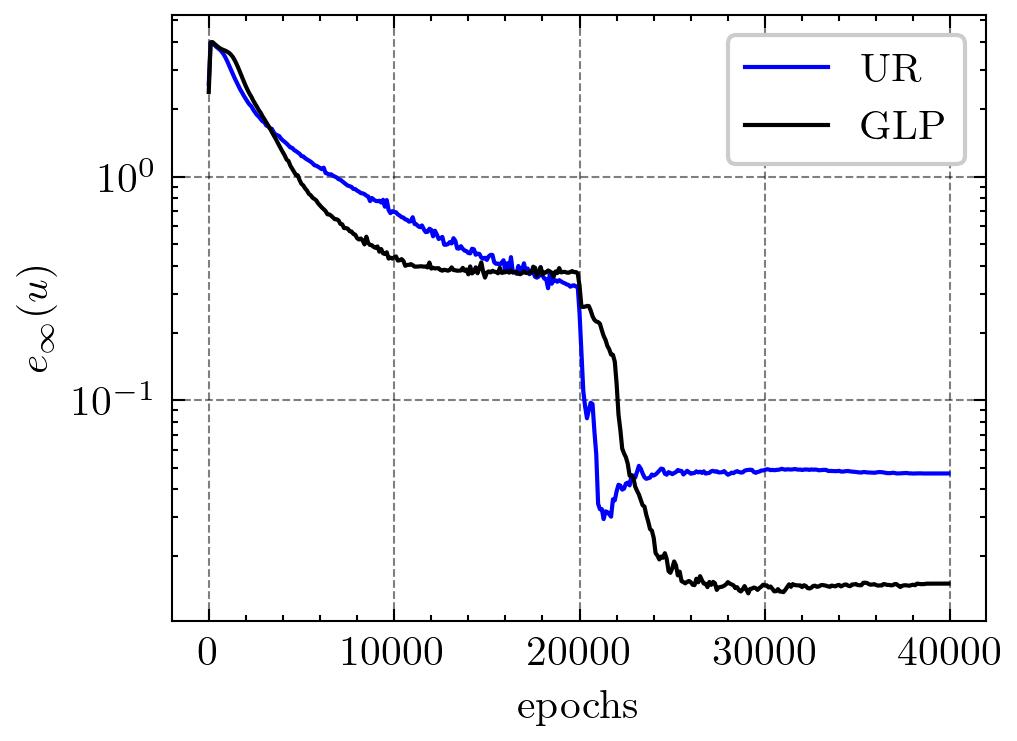}} \quad \quad 
\subfloat[performance of $e_2(u)$]{\includegraphics[width = 0.45\textwidth]
{./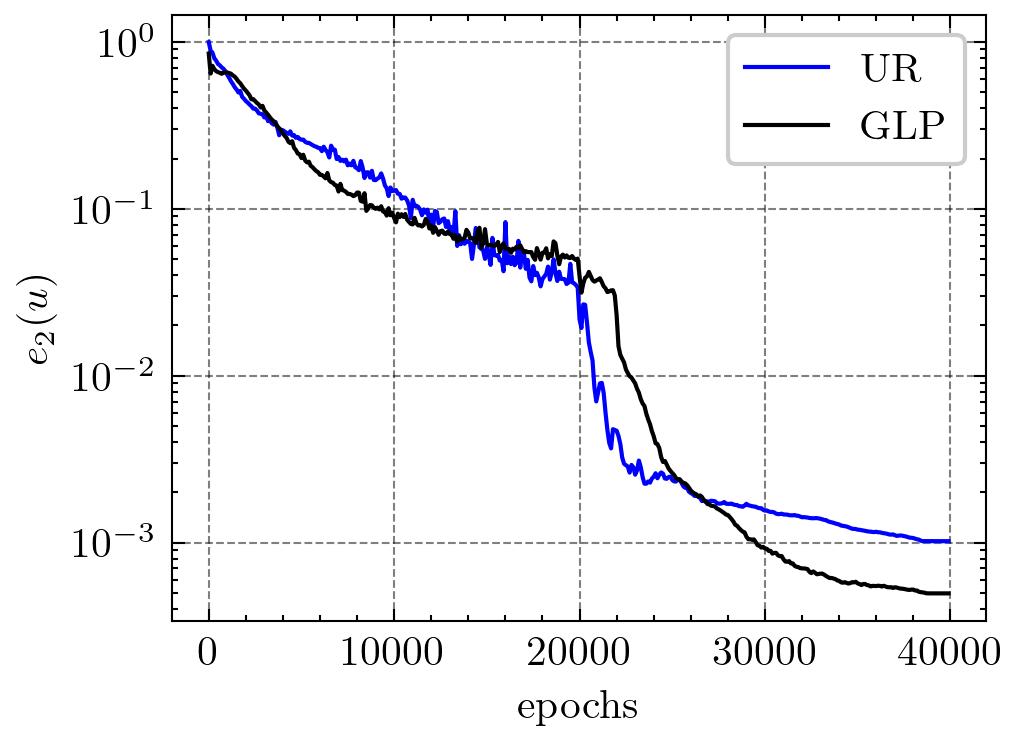}}
\caption{The performance of errors for the eight-dimensional nonlinear problem. (a) the relative error $e_\infty(u)$ during training process; (b) the  relative error $e_2(u)$ during training process.}
\label{fig:Nonlinear8d_ErrorEpochs}
\end{figure}

\begin{table}[h]
\scriptsize
\centering
\caption{
The number of residual points in $\Omega$ and the number of boundary points at each hyperplane on $\partial \Omega$ using sampling strategies for the eight-dimensional nonlinear problem.
}
\setlength{\tabcolsep}{5.mm}{
\begin{tabular}{|c|c|c|c|c|}%{p{1cm}p{2.2cm}p{1.2cm}p{1.2cm}p{1.2cm}p{1.2cm}p{1.2cm}p{1.6cm}}
\hline\noalign{\smallskip}
strategy   &  1 &  2 &3 & 4\\
\hline
method  & UR  & UR  & UR  & GLP\\
\hline
residual points  & 11215 & 24041 & 46213  & 11215\\
\hline
each hyperplane   & 100  & 100 & 100  &100\\
\hline
$\be_2(\bu)$& $ 1.020 \times 10^{-2}$ &$8.209\times 10^{-3}$ & $5.680\times 10^{-3}$ &  $ 4.965\times 10^{-3}$  \\
\hline
$\be_\infty(\bu)$& $4.702\times 10^{-2}$ &  $4.330\times 10^{-2}$&  $ 4.251\times 10^{-2}$& $1.512\times 10^{-3}$ \\
\hline
\end{tabular}
}
\label{tab:Nonlinear8d_PointsSettings} 
\end{table}

\section{Conclusions}
\label{sec:conclusion}

In this work, we propose a number-theoretic method sampling neural network for solving PDEs, which using a good lattice point (GLP) set as sampling points to approximate the empirical residual loss function of physics-informed neural networks (PINNs), with the aim of reducing estimation errors. From a theoretical perspective, we give the upper bound of the error for PINNs based on the GLP sampling. Additionally, we demonstrate that when using GLP sampling, the upper bound of the expectation of the squared $L_2$ error of PINNs is smaller compared to using uniform random sampling. Numerical results based on our method, when addressing low-regularity and high-dimensional problems, indicate that GLP sampling outperforms traditional uniform random sampling in both accuracy and efficiency.
In the future, we will  work on how to use GLP sampling on other deep learning solvers and how to use number theoretic methods for non-uniform adaptive sampling.

% @@@@@@@@@@@@@@@@@@@@@@@@@@
%\section*{Contribution statement}
% \noindent Yu Yang: Methodology, Coding,  Writing \& Editing. Helin Gong: Conceptualization, Methodology, Nuclear engineering data curation, Writing \& Editing, Review, Funding acquisition. Qiaolin He: Conceptualization, Methodology, Review, Funding acquisition. Qihong Yang: Coding. Yangtao Deng: Supervision, Review. Shiquan Zhang: Conceptualization, Review, Funding acquisition.   

%Yu Yang, Helin Gong*, Shiquan Zhang*, Qihong Yang, Zhang Chen, Qiaolin He, Qing Li

% @@@@@@@@@@@@@@@@@@@@@@@@@@
\section*{Acknowledgment}

This research is partially sponsored by the National Key R \& D Program of China (No.2022YFE03040002) and the National Natural Science Foundation of China (No.12371434). 

Additionally, we would also like to express our gratitude to Professor KaiTai Fang and Dr. Ping He from BNU-HKBU United International College for their valuable discussions and  supports in this research.

\section*{Data Availability Statement}
The data that support the findings of this study are available from the corresponding author upon reasonable request.

\section*{Conflict of Interest}

The authors have no conflicts to disclose.

% \section*{Data Availability}
%  The datasets generated during and/or analysed during the current study are available from the corresponding author on reasonable request.

% \textcolor{black}{The codes related MMPDE-Net are publicly available on GitHub at}

% \textcolor{black}{https://github.com/YangYuSCU/MMPDE-Net.}

\section*{Appendix A}
\label{sec:AppendixA}

\renewcommand{\proofname}{\textbf{Proof of Theorem \ref{thm:1}}}
\begin{proof}

Using Eq \eqref{eq:descentlemma}, we have
\begin{equation}\label{eq:thm1_1}
         \mathcal{L}_{N}(\bx;\theta_{i+1})  \leq \mathcal{L}_{N}(\bx;\theta_{i}) + \nabla \mathcal{L}_{N}(\bx;\theta_{i})^T(\theta_{i+1}-\theta_{i} ) + \frac{1}{2}  \mathbf{L}  \Vert  \theta_{i+1}-\theta_{i} \Vert_{2}^2.
\end{equation}
According to the stochastic gradient descent method, 
\begin{equation}\label{eq:thm1_2}
        \theta_{i+1} = \theta_{i} - \eta g(x_i,\theta_i).
\end{equation}
Substitute Eq \eqref{eq:thm1_2} into Eq \eqref{eq:thm1_1} and find the expectation for $x_i$, we have
\begin{equation}\label{eq:thm1_3}
       \mathbb{E}_{x_i}\left(\mathcal{L}_{N}(\bx;\theta_{i+1})\right) \leq \mathcal{L}_{N}(\bx;\theta_{i}) - \eta \nabla \mathcal{L}_{N}(\bx;\theta_{i})^T \mathbb{E}_{x_i}\left(g(x_i;\theta_{i})\right) + \frac{\eta^2}{2}  \mathbf{L}   \mathbb{E}_{x_i} \left(\Vert g(x_i,\theta_i) \Vert_{2}^2 \right).
\end{equation}
According to Eq \eqref{eq:asumoment2} and \eqref{eq:asumoment3} in Assumption \ref{asu:stochastic_gradient}, it is obtained that
\begin{equation}\label{eq:thm1_4}
    \begin{aligned}
       \mathbb{E}_{x_i} \left(\Vert g(x_i,\theta_i) \Vert_{2}^2 \right) &=\mathbb{V}_{x_i} \left(g(x_i,\theta_i)\right) + \Vert\mathbb{E}_{x_i} \left( g(x_i,\theta_i)\right) \Vert_{2}^2 \\
       & \leq C_V s(N_r,N_b)+\left(M_V(1+s(N_r,N_b))+\mu^2_G\right) \Vert\nabla \mathcal{L}_{N}(\bx;\theta_{i})\Vert_{2}^2. 
    \end{aligned}
\end{equation}
Using  Eq \eqref{eq:asumoment1} and \eqref{eq:thm1_4}, Eq \eqref{eq:thm1_3} is rewritten as

\begin{equation}\label{eq:thm1_5}
    \begin{aligned}
       &\mathbb{E}_{x_i}\left(\mathcal{L}_{N}(\bx;\theta_{i+1})\right) -  \mathcal{L}_{N}(\bx;\theta_{i}) \leq   \frac{\eta^2}{2}  \mathbf{L}   \mathbb{E}_{x_i} \left(\Vert g(x_i,\theta_i) \Vert_{2}^2 \right) - \eta \nabla \mathcal{L}_{N}(\bx;\theta_{i})^T \mathbb{E}_{x_i}\left(g(x_i;\theta_{i})\right)  \\
       & \leq \frac{\eta^2}{2}  \mathbf{L}  C_V s(N_r,N_b)+\frac{\eta^2}{2}  \mathbf{L}\left(M_V(1+s(N_r,N_b))+\mu^2_G\right) \Vert\nabla \mathcal{L}_{N}(\bx;\theta_{i})\Vert_{2}^2 - \eta \mu \Vert \nabla \mathcal{L}_{N}(\bx;\theta_i) \Vert_2^2\\
       & = \frac{\eta^2}{2}  \mathbf{L}  C_V s(N_r,N_b) -(\eta\mu - \frac{\eta^2}{2}  \mathbf{L}\left(M_V(1+s(N_r,N_b))+\mu^2_G\right))\Vert\nabla \mathcal{L}_{N}(\bx;\theta_{i})\Vert_{2}^2.
    \end{aligned}
\end{equation}
Since $\eta$ satisfies Eq \eqref{eq:assumptioninThm1},
%\begin{equation}\label{eq:thm1_6}
%0<\eta\leq \frac{\mu}{ %\mathbf{L}\left(M_V+\mu^2_G(1+s(N_r,N_b))\right%)}
%\end{equation}
we have
\begin{equation}\label{eq:thm1_7}
    \begin{aligned}
       \mathbb{E}_{x_i}\left(\mathcal{L}_{N}(\bx;\theta_{i+1})\right)  &-  \mathcal{L}_{N}(\bx;\theta_{i}) \\
     &\leq \frac{\eta^2}{2}  \mathbf{L}  C_V s(N_r,N_b) -(\eta\mu - \frac{\eta^2}{2}  \mathbf{L}\left(M_V(1+s(N_r,N_b))+\mu^2_G\right))\Vert\nabla \mathcal{L}_{N}(\bx;\theta_{i})\Vert_{2}^2 \\
     & \leq\frac{\eta^2}{2}  \mathbf{L}  C_V s(N_r,N_b)  - \frac{\mu\eta}{2}\Vert\nabla \mathcal{L}_{N}(\bx;\theta_{i})\Vert_{2}^2.
    \end{aligned}
\end{equation}
\textcolor{black}{
By Assumption \ref{asu:PLcondition} and Lemma \ref{lem:PL*}, 
\begin{equation}\label{eq:thm1_8}
    \begin{aligned}
       \mathbb{E}_{x_i}\left(\mathcal{L}_{N}(\bx;\theta_{i+1})\right)  -  \mathcal{L}_{N}(\bx;\theta_{i}) &\leq\frac{\eta^2}{2}  \mathbf{L}  C_V s(N_r,N_b)  - \frac{\mu\eta}{2}\Vert\nabla \mathcal{L}_{N}(\bx;\theta_{i})\Vert_{2}^2\\
       & \leq \frac{\eta^2}{2}  \mathbf{L}  C_V s(N_r,N_b)  - c\mu\eta\mathcal{L}_{N}(\bx;\theta_{i}).
    \end{aligned}
\end{equation}
Adding $\mathcal{L}_{N}(\bx;\theta_{i})$ to both sides of Eq \eqref{eq:thm1_7} and taking total expectation for $\{x_1,x_2,...,x_i\}$ yields
\begin{equation}\label{eq:thm1_9}
    \begin{aligned}
       \mathbb{E}\left(\mathcal{L}_{N}(\bx;\theta_{i+1})\right)  
       \leq (1 - c\mu\eta)\mathbb{E}\left(\mathcal{L}_{N}(\bx;\theta_{i})\right) +\frac{\eta^2}{2}  \mathbf{L}  C_V s(N_r,N_b). 
    \end{aligned}
\end{equation}
Then we obtain  a recursive inequality,
\begin{equation}\label{eq:thm1_10}
    \begin{aligned}
       \mathbb{E}\left(\mathcal{L}_{N}(\bx;\theta_{i+1})\right) -\frac{\eta\mathbf{L}C_V}{2c\mu} s(N_r,N_b) 
       &\leq (1 - c\mu\eta) \left(\mathbb{E}\left(\mathcal{L}_{N}(\bx;\theta_{i})\right)- \frac{\eta\mathbf{L}C_V}{2c\mu} s(N_r,N_b) \right)\\
       & \leq (1 - c\mu\eta)^2 \left(\mathbb{E}\left(\mathcal{L}_{N}(\bx;\theta_{i-1})\right)- \frac{\eta\mathbf{L}C_V}{2c\mu} s(N_r,N_b) \right) \\
       &... \\
       & \leq (1 - c\mu\eta)^{i} \left(\mathbb{E}\left(\mathcal{L}_{N}(\bx;\theta_{1})\right)- \frac{\eta\mathbf{L}C_V}{2c\mu} s(N_r,N_b) \right).
    \end{aligned}
\end{equation}
Since $0<c\mu \eta \leq \frac{c\mu^2}{ \mathbf{L}\left(M_V+\mu^2_G(1+s(N_r,N_b))\right)} < \frac{c\mu^2}{ \mathbf{L}\mu^2_G} \leq 1,$
 we have the following conclusion
\begin{equation}\label{eq:thm1_11}
    \lim_{i \to \infty} \mathbb{E}\left(\mathcal{L}_{N}(\bx;\theta_i)\right) = \frac{\eta \mathbf{L}C_V}{2c\mu} s(N_r,N_b).
\end{equation}}

\end{proof}

\renewcommand{\proofname}{\textbf{Proof of Theorem \ref{thm:2}}}
\begin{proof}
According to the proof of Theorem \ref{thm:1},  we recall Eq \eqref{eq:thm1_9},
\begin{equation}\label{eq:thm3_1}
\begin{aligned}
   \mathbb{E}\left(\mathcal{L}_{N}(\bx;\theta_{i+1})\right)  
   \leq (1 - c\mu\eta_i)\mathbb{E}\left(\mathcal{L}_{N}(\bx;\theta_{i})\right) +\frac{\eta_i^2}{2}  \mathbf{L}  C_V s(N_r,N_b). 
\end{aligned}
\end{equation}

Now we will prove this theorem by induction. When $i=1$, due to the definition of $\kappa$, it is easy to prove that $   \mathbb{E}\left(\mathcal{L}_{N}(\bx;\theta_1)\right) \leq \frac{\kappa }{\xi+1} s(N_r,N_b)$. Assume that when $i=k >1$, Eq \eqref{eq:optim_gap_diminishingstep} still holds. Next, we consider the case when $i=k+1$.

According to Eq \eqref{eq:thm3_1}, we have
\begin{equation}\label{eq:thm3_2}
    \begin{aligned}
       \mathbb{E}\left(\mathcal{L}_{N}(\bx;\theta_{k+1})\right) 
       &\leq (1 - c\mu\eta_k) \frac{\kappa}{\xi+k}s(N_r,N_b)+
       \frac{1}{2}\eta^2_k \mathbf{L} C_V s(N_r,N_b) \\
       & = (1-\frac{\beta c \mu}{\xi + k})\frac{\kappa}{\xi+k}s(N_r,N_b) + \frac{\beta^2 \mathbf{L}C_Vs(N_r,N_b)}{2(\xi+k)^2} \\
       & =  \frac{\xi+k-\beta c \mu}{(\xi+k)^2} \kappa s(N_r,N_b) + \frac{\beta^2 \mathbf{L}C_Vs(N_r,N_b)}{2(\xi+k)^2} \\
        & = \frac{\xi+k-1}{(\xi+k)^2} \kappa s(N_r,N_b) -
        \frac{\beta c \mu-1}{(\xi+k)^2} \kappa s(N_r,N_b)
        + \frac{\beta^2 \mathbf{L}C_Vs(N_r,N_b)}{2(\xi+k)^2}.
    \end{aligned}
\end{equation}

Due to the definition of $\kappa$,  it is observed that $\kappa \geq \frac{\beta^2 \mathbf{L}C_V}{2(\beta c \mu -1)}$, then we have
\begin{equation}\label{eq:thm3_3}
\frac{\beta^2 \mathbf{L}C_Vs(N_r,N_b)}{2(\xi+k)^2} -\frac{\beta c \mu-1}{(\xi+k)^2} \kappa s(N_r,N_b)
         \leq 0.
\end{equation}

Using Eq \eqref{eq:thm3_2}, there is
\begin{equation}\label{eq:thm3_4}
    \begin{aligned}
       \mathbb{E}\left(\mathcal{L}_{N}(\bx;\theta_{k+1})\right) 
        & \leq \frac{\xi+k-1}{(\xi+k)^2} \kappa s(N_r,N_b) -
        \frac{\beta c \mu-1}{(\xi+k)^2} \kappa s(N_r,N_b)
        + \frac{\beta^2 \mathbf{L}C_Vs(N_r,N_b)}{2(\xi+k)^2}\\
        & \leq \frac{\xi+k-1}{(\xi+k)^2} \kappa s(N_r,N_b)\\
        & \leq \frac{\xi+k-1}{(\xi+k)^2-1} \kappa s(N_r,N_b)\\
        & = \frac{\kappa}{\xi+k+1}  s(N_r,N_b).
    \end{aligned}
\end{equation}

By induction, we have proved that Eq \eqref{eq:optim_gap_diminishingstep} holds.

\end{proof}

\renewcommand{\proofname}{\textbf{Proof of Theorem \ref{thm:3}}}
\begin{proof}

By  Assumption \ref{asu:Normrelations}, we have
\begin{equation}
     \begin{aligned}
     C_1 \Vert \bu^*(\bx)-\bu(\bx;\theta) \Vert_{2}
     &\leq \Vert \mathcal{A}[\bu^*(\bx)] -\mathcal{A}[\bu(\bx;\theta)] \Vert_{2}  + \Vert \mathcal{B}[\bu^*(\bx)]-\mathcal{B}[\bu(\bx;\theta)] \Vert_{2} \\
     &= \Vert \mathcal{A}[\bu(\bx;\theta)] - f(\bx) \Vert_{2} + \Vert \mathcal{B}[\bu(\bx;\theta)] - g(\bx) \Vert_{2} \\
     &= \Vert r(\bx;\theta) \Vert_{2} + \Vert b(\bx;\theta)\Vert_{2}\\
     &\leq  \sqrt{2}  \left(\Vert r(\bx;\theta) \Vert_{2}^2 + 
     \Vert b(\bx;\theta)\Vert_{2}^2\right)^{\frac{1}{2}}.
     \end{aligned}     
\end{equation}

%Since $r(\bx;\theta) \in \Omega=[0,1]^d$ and $b(\bx;\theta) \in \Omega=[0,1]^d $ have continuous  partial derivatives, then 
Using Lemma \ref{lem:MC_Discrepancy}, we have

\begin{equation}
    \begin{aligned}
     \mathcal{L}_{r}(\bx;\theta) &= \Vert r(\bx;\theta) \Vert_{2}^2 \leq  \mathcal{L}_{r,N_r}^{GLP}(\bx;\theta) +\bO\left(\frac { (\log N_r)^{d} } {{N_r}}\right), \\
     \mathcal{L}_{b}(\bx;\theta) &= \Vert b(\bx;\theta) \Vert_{2}^2
     \leq \mathcal{L}_{b,N_b}^{UR}(\bx;\theta)  +\bO\left(N_b^{-\frac{1}{2}}\right).
     \end{aligned} 
\end{equation}

Therefore,
\begin{equation}
    \begin{aligned}
     \Vert \bu^*(\bx)-\bu(\bx;\theta) \Vert_{2} &\leq  \frac{\sqrt{2}}{C_1}(\Vert r(\bx;\theta) \Vert_{2}^2 + 
     \Vert b(\bx;\theta)\Vert_{2}^2)^{\frac{1}{2}}\\
     & \leq \frac{\sqrt{2}}{C_1} \left(\mathcal{L}_{r,N_r}^{GLP} + \mathcal{L}_{b,N_b}^{UR}
      + \bO\left(\frac { (\log N_r)^{d} } {{N_r}}\right) +\bO\left(N_b^{-\frac{1}{2}}\right) \right)^{\frac{1}{2}}.
     \end{aligned} 
\end{equation}
 Eq \eqref{eq:errorestimate_UR} can be obtained similarly.
\end{proof}

\renewcommand{\proofname}{\textbf{Proof of Corollary \ref{cor:1}}}
\begin{proof}
By Theorem \ref{thm:1},
\begin{equation}\label{eq:cor1_1}
    \lim_{i \to \infty} \mathbb{E}\left(\mathcal{L}_{N}(\bx;\theta_i)\right) = \frac{\eta \mathbf{L}C_V}{2c\mu} s(N_r,N_b).
\end{equation}
Thus, if $\left\{ \bx_i \right\}_{i=1}^{N_r}$ is a good lattice point set, we have
\begin{equation}\label{eq:cor1_2}
    \begin{aligned}
    \lim_{i \to \infty} \mathbb{E}\left(\mathcal{L}_{r,N_r}^{GLP}(\bx;\theta_i) + \mathcal{L}_{b,N_b}^{UR}(\bx;\theta_i)\right) &= \frac{\eta \mathbf{L}C_V}{2c\mu} \left( \bO\left(\frac { (\log N_r)^{d} } {{N_r}}\right) +\bO\left(N_b^{-\frac{1}{2}}\right)\right)^2. 
    \end{aligned}
\end{equation}
By Theorem \ref{thm:3},
\begin{equation}\label{eq:cor1_3}
    \begin{aligned}
     \Vert \bu^*(\bx)-\bu(\bx;\theta) \Vert_{2}  \leq \frac{\sqrt{2}}{C_1} \left(\mathcal{L}_{r,N_r}^{GLP} + \mathcal{L}_{b,N_b}^{UR}
      + \bO\left(\frac { (\log N_r)^{d} } {{N_r}}\right) +\bO\left(N_b^{-\frac{1}{2}}\right) \right)^{\frac{1}{2}}.
     \end{aligned} 
\end{equation}
Taking squaring of the inequality and calculating the expectation yields
\begin{equation}\label{eq:cor1_4}
    \begin{aligned}
     \mathbb{E} \left(\Vert \bu^*(\bx)-\bu(\bx;\theta_i) \Vert_{2}^2\right)  \leq \frac{2}{{C_1}^2} \left( \mathbb{E} \left(\mathcal{L}_{r,N_r}^{GLP} + \mathcal{L}_{b,N_b}^{UR}\right)
      + \bO\left(\frac { (\log N_r)^{d} } {{N_r}}\right) +\bO\left(N_b^{-\frac{1}{2}}\right) \right).
     \end{aligned} 
\end{equation}
Combining Eq \eqref{eq:cor1_2} and \eqref{eq:cor1_4}, we have
    \begin{equation}\label{eq:cor1_5}
    \begin{aligned}
      \lim_{i \to \infty}\mathbb{E}\left(\Vert \bu^*(\bx)-\bu(\bx;\theta_i) \Vert_{2}^2\right)
     \leq \frac{2}{{C_1}^2} \left(\frac{\eta \mathbf{L}C_V}{2c\mu} \left( \bO\left(\frac { (\log N_r)^{d} } {{N_r}}\right) +\bO\left(N_b^{-\frac{1}{2}}\right)\right)^2 + \bO\left(\frac { (\log N_r)^{d} } {{N_r}}\right) +\bO\left(N_b^{-\frac{1}{2}}\right) \right) .
     \end{aligned} 
\end{equation}
 Eq \eqref{eq:errorestimate_cor_UR} can be proved similarly.
\end{proof}

\vfill

\bibliographystyle{cas-model2-names}
\bibliography{NTM-PINN}
% \begin{thebibliography}{99}  
% \bibitem{renardy2006introduction}Renardy, Michael, and Robert C. Rogers. An introduction to partial differential equations. Vol. 13. Springer Science \& Business Media, 2004.
% \end{thebibliography}

\end{document}